\newcommand{\C}{{\mathbb C}}
\newcommand{\N}{{\mathbb N}}
\newcommand{\R}{{\mathbb R}}
\newcommand{\Z}{{\mathbb Z}}
\newcommand{\fa}{{\mathfrak a}}
\newcommand{\fd}{{\mathfrak d}}
\newcommand{\fm}{{\mathfrak m}}
\newcommand{\cH}{{\mathcal H}}
\newcommand{\cL}{{\mathcal L}}
\newcommand{\cM}{{\mathcal M}}
\newcommand{\scrA}{\mathscr{A}}
\newcommand{\scrB}{\mathscr{B}}
\newcommand{\scrC}{\mathscr{C}}
\newcommand{\scrS}{\mathscr{S}}
\newcommand{\ga}{\alpha}
\newcommand{\gb}{\beta}
\renewcommand{\gg}{\gamma}
\newcommand{\gG}{\Gamma}
\newcommand{\gd}{\delta}
\newcommand{\gve}{\varepsilon}
\newcommand{\gk}{\kappa}
\newcommand{\gl}{\lambda}
\newcommand{\go}{\omega}
\newcommand{\gvp}{\varphi}
\newcommand{\gt}{\theta}
\DeclareMathOperator{\ctg}{ctg}
\DeclareMathOperator{\tg}{tg}
\DeclareMathOperator{\diag}{diag}
\DeclareMathOperator{\im}{Im}
\DeclareMathOperator\Mp{Mp}
\DeclareMathOperator{\op}{op} 
\DeclareMathOperator{\ord}{ord}
\DeclareMathOperator\Sp{Sp}
\DeclareMathOperator{\re}{Re}
\DeclareMathOperator{\res}{res}
\DeclareMathOperator{\sgn}{sgn}
\DeclareMathOperator{\Tr}{Tr} 
\DeclareMathOperator{\tr}{tr}
\DeclareMathOperator{\U}{U}
\newtheorem{theorem}{Theorem}[section]
\newtheorem{corollary}[theorem]{Corollary}
\newtheorem{lemma}[theorem]{Lemma}
\newtheorem{proposition}[theorem]{Proposition}
\theoremstyle{definition}
\newtheorem{remark}[theorem]{Remark}
\newtheorem{definition}[theorem]{Definition}
\def\skp#1{\langle#1\rangle}
\def\mat#1{\begin{pmatrix}#1\end{pmatrix}}
\def\Res{\mathrm{Res}_{z=0}}
\renewcommand{\labelenumi}{(\roman{enumi})}
\title[Noncommutative Residues, Equivariant Traces, Trace Expansions] {Noncommutative Residues, Equivariant Traces, and Trace Expansions for an Operator Algebra on $\R^n$} 
\author{Anton Savin}
\address{A. Savin. Peoples' Friendship University of Russia (RUDN University),
6 Miklukho-Maklaya St, Moscow, 117198, Russian Federation}
\email{a.yu.savin@gmail.com} 
\author{Elmar Schrohe} 
\address{E. Schrohe. Leibniz University Hannover, Institute of Analysis, Welfengarten 1, 30167 Hannover, Germany }
\email{schrohe@math.uni-hannover.de} 
\date{}
\begin{document}
\maketitle
\begin{abstract} 
We consider an algebra $\mathscr A$ of Fourier integral operators on $\mathbb R^n$. It consists of all operators $D: \mathscr S(\mathbb R^n)\to \mathscr S(\mathbb R^n)$ on the Schwartz space $\mathscr S(\mathbb R^n)$ 
that can be written as finite sums 
\begin{eqnarray}
\label{algebra}
D= \sum R_gT_w A,
\end{eqnarray} 
with  Shubin type pseudodifferential operators $A$, Heisenberg-Weyl operators $T_w$, $w\in \mathbb C^n$, and lifts $R_g$, $g\in \U(n)$, of unitary matrices $g$ on $\mathbb C^n$ to operators $R_g$ in the complex metaplectic group. 

For $D \in \mathscr A$ and a suitable auxiliary Shubin pseudodifferential operator $H$ we establish expansions for $\mathop{\mathrm {Tr}}(D(H-\lambda)^{-K})$ as $|\lambda| \to \infty$ in a sector of 
$\mathbb C$ for sufficiently large $K$ and of $\mathop{\mathrm {Tr}}(De^{-tH})$ as $t\to 0^+$. We also obtain the singularity structure of the meromorphic extension of 
$z\mapsto \mathop{\mathrm{Tr}}(DH^{-z})$ to $\mathbb C$. 
 
Moreover, we find a  noncommutative residue  as a suitable coefficient in these  expansions and construct from it a family  of localized equivariant traces on the algebra.

\noindent {\bf 2010 Mathematics Subject Classification:}  {58J40, 58J42} 
\end{abstract}

\tableofcontents 

\section{Introduction}  
Traces play an important role in many areas of mathematics and physics. This holds particularly true for noncommutative geometry, where a remarkable observation was made by Wodzicki  \cite{Wod1,Wod2} in 1984: He showed that, on the algebra of all classical pseudodifferential operators on a closed manifold $M$ of dimension $\ge2$, there exists - up to multiples -  only one trace, the so-called noncommutative residue, denoted `$\res$'. Given a pseudodifferential operator $A$ of order $\fm\in \Z$ whose symbol $a$ has an asymptotic expansion $a\sim\sum_{j\ge0} a_{\fm-j}$ into terms $a_{\fm-j}(x,\xi)$ that are positively homogeneous of degree $\fm-j$ in $\xi$, 
$$\res A = (2\pi)^{-\dim M} \int_{S^*M} a_{-\dim M} (x,\xi) dS.$$

Connes' result \cite{Connes88} that the noncommutative residue coincides with the Dixmier trace on pseudodifferential operators of order $-\dim M$ then allowed Connes and Moscovici 
\cite{CoMo1} to write down index formulae in noncommutative geometry that are explicitly computable in many cases. It was shown by Ponge \cite{Pon4} that their formula indeed gives the classical local index formula for the Dirac spectral triple.

The noncommutative residue was discovered independently by Guillemin in his `soft' proof of Weyl's formula on the asymptotic distribution of eigenvalues \cite{Gu}. He introduced  $\res D$  actually in a more abstract setting as the residue in $z=0$ of the meromorphic extension of the operator trace $\Tr(DH^{-z})$ for an auxiliary pseudodifferential operator $H$ which he assumed to be positive and of order $1$. The residue then turned out to be independent of the choice of $H$.

The study of expressions like $\Tr(DH^{-z})$ for an arbitrary pseudodifferential operator $D$ and a fixed auxiliary operator $H$
generalized  Seeley's work on complex powers of elliptic pseudodifferential operators \cite{Seeley67}; it  was taken up systematically by Grubb and Seeley. 
In \cite{GS1}, they developed their {\em  weakly parametric calculus} adapted to the analysis of expressions $\Tr(D(H-\lambda)^{-K})$ for an arbitrary operator $D$ and the $K$-th power of the resolvent of the auxiliary operator $H$. They obtained an expansion of this trace into powers of $\lambda$ and logarithmic terms  as $\gl \to \infty$ in a sector of the complex plane.  
Moreover, as they showed in \cite{GS2}, similar expansions for $\Tr(De^{-tH})$ as $t\to 0^+$ and information on the structure of the singularities of the meromorphic extension of $\Tr(DH^{-z})$ to the complex plane can be derived from it. In fact, under rather mild conditions, the analysis of all three of these expansions is equivalent. 

Analogs of Wodzicki's residue and connections to trace expansions meanwhile have been established for many situations;  e.g.  for anisotropic \cite{BoNi}, log-polyhomogeneous \cite{Lesch99}, SG (or scattering) type \cite{Nic03}, bisingular \cite{NR}, conically degenerate \cite{Sch97}, \cite{GL02},  and projective \cite{SeSt13}   pseudodifferential operators, as well as for boundary value problems \cite{FGLS}, \cite{GSc1}, certain Fourier integral operators \cite{Gu93}, foliations \cite{BN}, 
Heisenberg manifolds \cite{Ponge07}, noncommutative tori \cite{FaWo11}, \cite{LNP2016}   and for algebras involving group \cite{D1} or groupoid actions   \cite{Perrot14}, to mention just a few. 

In the present article, we consider an algebra $\scrA$ of Shubin type Fourier integral operators on $\R^n$. 
Algebraically, $\scrA$ is the (algebraic)  twisted  crossed product 
$$\scrA= \Psi\rtimes (\C^n\rtimes \U(n))$$
of the algebra $\Psi$ of all classical  Shubin pseudodifferential operators on $\R^n$ with the group 
$\C^n\rtimes \U(n)$, where $\U(n)$ denotes the unitary operators on $\C^n$, which we identify with $T^*\R^n=\R^n\times \R^n$ via $(x,p)\mapsto z=p+ix$. 
The algebra $\scrA$ has a natural representation on the Schwartz space $\scrS(\R^n)$ of rapidly decreasing  functions on $\R^n$. In this case,
$\C^n$ is represented by Heisenberg-Weyl operators: For $w=a-ik\in \C^n$, the operator $T_w$ is defined by 
$$T_w u(x)  = e^{ikx-iak/2} u(x-a), \quad u\in \scrS(\R^n).$$
Moreover, $\U(n)$ is represented by operators $R_g$, $g\in \U(n)$,  in the complex metaplectic group $\Mp^c(n)$. Full details on all operator classes will be given, below. For treatises on the metaplectic group we refer to Leray \cite{Ler8} and de Gosson \cite{deG1}, see also
\cite{SaSch2}.

Summing up, we consider operators $D$ which are finite sums
 \begin{eqnarray}
\label{eq_D}
D= \sum R_{g} T_{w} A,
\end{eqnarray}
with classical Shubin pseudodifferential operators $A$, Heisenberg-Weyl operators $T_w$ and 
metaplectic operators $R_g$. 

This algebra has been introduced in \cite{SaSch3}, where explicit formulae for the Chern-Connes character in the local index formula of Connes and Moscovici \cite{CoMo1} were derived and applications to noncommutative tori and toric orbifolds were given. 
For the subalgebra without the Heisenberg-Weyl operators an index theorem has been proven in \cite{SaSch2}.
The main interest in  these algebras comes from the fact that, here, explicit computations are possible, in particular for index problems in operator algebras associated with groups of quantized canonical transformations as they were considered in \cite{SaSchSt4}, \cite{SaSch1} by the authors and B. Sternin or in  \cite{GKN} by Gorokhovsky, de Kleijn and Nest.

The main analytic object in this article are trace expansions of three types. 
We fix an auxiliary operator $H$ of positive order in the Shubin calculus (a good choice is the harmonic oscillator $H_0=\frac12(|x|^2-\Delta$)), so that the resolvent $(H-\gl)^{-1}$ exists for large $\gl$ in a sector about the negative real axis. 
Given an operator $D=R_gT_wA$ as above, the operator $D(H-\lambda)^{-K}$ will be of trace class whenever $K$ is  sufficiently large, and the trace will be a holomorphic function of $\lambda$. We will show that, as $\gl\to \infty$ in the sector, we have an expansion  
\begin{eqnarray}\label{resolvent_exp}
\Tr(R_gT_wA(H-\lambda)^{-K}) 
&\sim& \sum_{j=0}^\infty c_j (-\lambda)^{(2m+\ord A-j)/\ord H-K} \\
&&+\sum_{j=0}^\infty (c_j' \ln (-\lambda)+c_j'')(-\lambda)^{-j-K}\nonumber
\end{eqnarray}
with suitable coefficients $c_j, c_j', c_j''$, $j=0,1,\ldots$. The trace is $O(|\lambda|^{-\infty})$, if the fixed point set of the affine map $u\mapsto gu+w$ is empty. Here, $\ord A $ and $\ord H$ denote the orders of $A$ and $H$, respectively, and $m$ is the complex dimension of the fixed point set of $g$. The present expansion differs from that in the standard situation, where the dimension of the underlying space enters instead of $m$. 
The coefficients $c_j$ and $c_j'$ are `local': They are determined from the homogeneous components in the asymptotic expansion of the symbols of $D$ and $H$, respectively, and therefore can be computed explicitly,  while the $c_j''$ are `global', they also depend on the residual part of the symbol.
When $g=1$ and $w=0$ we have $m=n$ and recover (an analog of) the classical  result by Grubb and Seeley \cite[Theorem 2.7]{GS1}. 

Following Grubb and Seeley \cite{GS2}, we deduce  two other results. Assuming that $(H-\gl)^{-1}$ exists everywhere in the closed sector, including $\gl=0$, we can define the operator zeta function $\zeta_{R_gT_wA}(z) = \Tr(R_gT_wAH^{-z})$. It is  holomorphic for $\re z>(2n+\ord A)/\ord H$ and extends  meromorphically   to the whole complex plane with at most simple poles. We have the pole structure 
\begin{eqnarray}\label{trace_exp}
\lefteqn{\Gamma(z)\zeta_{R_gT_wA}(z)}\\
&\sim &\sum_{j=0}^\infty \frac{\tilde c_j}{z-(2m+\ord A-j)/\ord H}
+ \sum_{j=0}^\infty \Big(\frac{\tilde c_j'}{(z+j)^2} + \frac{\tilde c_j''}{z+j}\Big)
\nonumber
\end{eqnarray}
with suitable coefficients $\tilde c_j, \tilde c_j'$ and $\tilde c_j''$ related to those above by universal constants. 

If the sector in which the resolvent is holomorphic is larger than the left half plane, the `heat trace'  $\Tr\big(R_gT_wAe^{-tH}\big)$ exists for all $t>0$ and has  an expansion 
\begin{eqnarray}
\label{exp_exp}
\mbox{\;\;\;}\Tr( R_gT_wAe^{-tH})  \sim \sum_{j=0}^\infty \tilde c_j t^{(j-2m-\ord A)/\ord H} + 
\sum_{j=0}^\infty (-\tilde c'_j\ln t + c_j'')t^j
\end{eqnarray}
with the same coefficients as in \eqref{trace_exp}.

After multiplication by $\ord H$ the coefficients $c_0'$ and $\tilde c_0'$ are independent of the auxiliary operator $H$ and yield an analog of the noncommutative residue for our algebra. 
Restricted to the Shubin pseudodifferential operators it coincides with the residue introduced by 
Boggiatto and Nicola \cite{BoNi} for a more general class of anisotropic pseudodifferential operators on $\R^n$. 

Given a discrete subgroup
$G$ of $\C^n\rtimes \U(n)$ and an element $(w_0,g_0)$ of $G$, we define the residue trace 
of the operator $D=\sum R_gT_wA$  localized at the conjugacy class $\skp{(w_0,g_0)}$ by 
\begin{eqnarray}
\label{eq_res}
\res_{\skp{(w_0,g_0)}} D = \ord H\sum_{(w,g)\in \skp{(w_0,g_0)}} c_0'(R_gT_wA)
\end{eqnarray}
and show that these are actually traces on the algebra $\mathscr A$. 

In this sense, our article is strongly related to the work of Dave \cite{D1}, who  defined  equivariant noncommutative residues for the algebra $\Psi_{\rm cl}(M)\rtimes \gG$ of operators on a closed manifold $M$, generated by the classical pseudodifferential operators and a finite group $\gG$ of diffeomorphisms of $M$ and computed the cyclic homology in terms of the de Rham cohomology of the fixed point manifolds $S^*M^g$. 
It is also related to Perrot's paper \cite{Perrot14}, where he studied the local index theory for shift operators associated to non-proper and non-isometric actions of Lie groupoids and  computed the residue of the corresponding operator zeta functions in zero.   

{\bf Structure of the article.} We start by recalling the operator classes in Section \ref{sect_algebra}. 
The main work then goes into the derivation of the expansion \eqref{resolvent_exp} in Theorem \ref{thm_resolvent} with the help of the weakly parametric calculus of Grubb and Seeley \cite{GS1} and stationary phase expansion techniques.  
From Theorem \ref{thm_resolvent} we deduce  Theorem \ref{thm_zeta} on the structure of the singularities of the operator zeta function and Theorem \ref{thm_exp} on the `heat trace' expansion by abstract arguments, following Grubb and Seeley \cite{GS2}. 
In Section 5 we introduce the noncommutative residue. We show that it is explicitly computable and derive the corresponding expression. It allows us to define an equivariant trace on 
$\scrA$ for every conjugacy class in the subgroup  $G$ of $\C^n\rtimes \U(n)$. 
The appendix contains the essential material on the weakly parametric calculus of Grubb and Seeley.
.

{\bf Acknowledgments.} ES gratefully acknowledges the support of Deutsche Forschungsgemeinschaft through grant SCHR 319/10-1.  AS is grateful for support to the Russian Foundation for Basic Research, project Nr.~21-51-12006.
The results in this article were announced in the proceedings contribution \cite{SaSch4}.

\section{Operator Classes}\label{sect_algebra}

\subsection*{Shubin type pseudodifferential operators}
A Shubin type pseudodifferential symbol of order $\fa\in \R$ is a smooth function $a$ on 
$\R^n\times \R^n$ satisfying 
$$|D^\ga_pD^\gb_x a( x,p)| \le C_{\ga,\gb} (1+|x|^2+|p|^2)^{(\fa-|\ga|-|\gb|)/2}, \quad(x,p)\in \R^n\times \R^n,$$
 for all multi-indices $\ga$, $\gb$. 
From $a$ we define the associated pseudodifferential operators $\op a$ and 
$\op^{\rm w} a$, respectively,  by standard and Weyl quantization.

We denote by $\Psi^\fa$ the space of all Shubin type pseudodifferential operators of order $\fa$ and let $\Psi=\bigcup_{\fa\in \R}$.
An important example is the operator
\begin{eqnarray}\label{def_ham}%
H_0 = \frac12(|x|^2-\Delta) \in \Psi^2, 
\end{eqnarray}
the so-called harmonic oscillator, which is selfadjoint and positive on $L^2(\R^n)$. 

The Sobolev spaces $\mathcal H^s(\R^n)$, $s\in \R$, consist of all tempered distributions $u$ such that $\op((1+|x|^2+|p|^2)^{s/2})u\in L^2(\R^n)$. We have $\Psi^\fa\subset  \scrB(\cH^s(\R^n),\cH^{s-\fa}(\R^n)) $ for all  $s\in \R$. In particular, zero order  operators 
are bounded and those of  order $<-2n$ are of trace class on $L^2(\R^n)$.

We will use mostly classical symbols, i.e.~symbols $a$ of order $\fa\in \Z$ that have an asymptotic expansion 
$$a \sim \sum_{j=0}^\infty a_{\fa-j}  $$
with $a_{\fa-j}$ smooth and positively homogeneous in $(x,p)$ of degree $\fa-j$ for 
$|(x,p)|\ge 1$. We call $a_\fa$ the principal symbol and write $\Psi^\fa_{\rm cl} $ 
for the class of operators with classical symbols of order $\le \fa$ and $\Psi_{\rm cl}$ for 
$\bigcup_\fa \Psi_{\rm cl}^\fa$. See \cite{Shu1} for more details.

The above operators need not be scalar, it is often important to also consider systems of operators  over $\R^n$, see e.g. \cite{SaSch3}.

\subsection*{Heisenberg-Weyl operators.} Given  $w=a-ik\in\mathbb{C}^n$,  $a,k\in \mathbb{R}^n$,  define
\begin{eqnarray}
T_w u(x)=e^{ikx-iak/2}u(x-a), \quad u\in \scrS(\R^n).
\end{eqnarray}
For $v,w\in \C^n$ we obtain
\begin{equation}
\label{eq-comm1}
T_{v}T_{w}=e^{-i\im( v,w)/2}T_{v+w},\quad \text{ where } (v,w)=v\overline{w}.
\end{equation}
Moreover, for a Shubin symbol $b$,
\begin{eqnarray}
\label{eq-comm.Sh}
&(T_w)^{-1} \op(b) T_w = T_{-w} \op(b) T_w = \op(\tilde b), \; \tilde b(x,p) = b(x+a,p+k).  
\end{eqnarray}
 
\subsection*{Metaplectic operators.} 
The group $\Mp(n)$ of metaplectic operators on $L^2(\R^n)$ is generated by the operators $S_q=e^{i\op^{\rm w} q}$, where $\op^{\rm w}q$ is the Weyl quantization of a real  homogeneous quadratic form $q=q(x,p)$ on $\mathbb \R^n\times\R^n$. 
The operator $S_q$ induces a symplectic map on $T^*\R^n$ by evaluating the Hamiltonian flow generated by the function $q$ at time $1$. This yields a surjection, in fact a double covering, 
$\pi: \Mp(n)\to \Sp(n)$ of the symplectic group. 

The complex metaplectic group $\Mp^c(n)$ has $e^{i\phi}$, $\phi\in \R$, as additional generators. 
It was shown in \cite[Proposition 1]{SaSch2} that the map 
$$R: \U(n)\to \Mp^c(n),  g \mapsto R_g = \pi^{-1}(g) \sqrt{\det g},$$
first defined for $g$ close to the identity $g=I\in \U(n)$ with the choices $\pi^{-1} (I)=I$ and $\sqrt{\det I}=1$, extends continuously to all of $\U(n)$.  These are the operators used above. The metaplectic operators form a special class of Fourier integral operators, compatible with the Shubin pseudodifferential operators in the sense that, for a Shubin symbol $a$ and a metaplectic operator $S$,    
\begin{eqnarray}
\label{Egorov}
S^{-1} \op^{\rm w}(a) S = \op^{\rm w}(a\circ \pi(S)). 
\end{eqnarray}

Since $\U(n)$ is generated by ${\rm O}(n)$ and $\U(1)$, see~\cite[Lemma 1]{SaSch2},
the unitary representation $g\mapsto R_g$ of $\U(n)$ is determined  by letting 
\begin{enumerate}
\item $R_gu(x)=u(g^{-1}x)$, if $g\in {\rm O}(n)\subset \U(n)$. \item $R_gu(x)=e^{i\varphi(1/2-H_1)}u(x)$, where $g={\rm diag}(e^{i\varphi},1,\ldots,1),$ and $H_1=\frac12(x_1^2-\partial_{x_1}^2)$ is the one-dimensional  harmonic oscillator. 
\end{enumerate} 
In the first case,  $R_g$  is a so-called shift operator; in the second, it is
a fractional Fourier transform with respect to $x_1$. According to Mehler's formula \cite[Section 4]{H95}, 
\cite{MK87}, 
\begin{eqnarray*}\lefteqn{R_gu(x)
=\sqrt{\frac{1-i\ctg \varphi}{2\pi}}}\\
&&\times \int 
   \exp\left(
        i\left(
         (x_1^2+y_1^2)\frac{\ctg \varphi}2-\frac{x_1y_1}{\sin\varphi} 
        \right) 
       \right) u(y_1,x_2,...,x_n)dy_1.
\end{eqnarray*}
Here we choose the square root with positive  real part. 
More precisely, we set
\begin{equation}
\label{eq-mehler1}
\sqrt{ 1-i\ctg \varphi}=\frac{\exp(-i(\frac\pi 4\sgn\varphi-\frac\varphi 2))}{|\sin\varphi|^{1/2}},\qquad
0<|\varphi|<\pi.
\end{equation}

In (ii) we may  confine ourselves to  $-\pi<\gvp<\pi$, $\gvp\not=0$, as the cases $\gvp=0,\pi$ are covered by (i). For $\gvp=0$,  $R_g$ is the identity,  for $\gvp=\pi$,  it is the reflection in the first component and  for $\gvp=\pi/2$,  
it is the Fourier transform in the first variable. 
By Theorem~7.13 in~\cite{deG1} we have 
\begin{equation} 
\label{cr12}
R_g T_w R_g^{-1}=  T_{g w}
\end{equation}   
for all $g\in \U(n)$, $w\in \C^n$. 
The relations \eqref{eq-comm.Sh}, \eqref{Egorov} and \eqref{cr12} imply that the operators of the form \eqref{algebra} indeed form an algebra. 

As a consequence of \eqref{Egorov} we obtain: 
\begin{lemma}\label{mapping} 
A metaplectic operator $S$ defines a continuous linear map 
$$S: \cH^s(\R^n) \longrightarrow \cH^s(\R^n), \ s\in \R,\quad \text{ and }\quad 
S: \scrS(\R^n) \longrightarrow \scrS (\R^n). $$ 
\end{lemma}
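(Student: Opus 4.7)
The plan is to reduce the Sobolev mapping property to the $L^2$-unitarity of $S$ and the boundedness properties of Shubin operators, using Egorov's formula \eqref{Egorov} to transfer conjugation by $S$ inside the Shubin calculus.

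Fix $s\in\R$ and let $\gL_s=\op^{\rm w}(\gl_s)\in \Psi^s$ be the elliptic reference operator with $\gl_s(x,p)=(1+|x|^2+|p|^2)^{s/2}$. Since $\gL_s$ is elliptic of order $s$, the graph norm $\|u\|_{L^2}+\|\gL_s u\|_{L^2}$ is equivalent to $\|u\|_{\cH^s}$ on $\scrS(\R^n)$. For $u\in \scrS(\R^n)$, $L^2$-unitarity of $S$ together with \eqref{Egorov} yields
$$\|\gL_s Su\|_{L^2}=\|S^{-1}\gL_s Su\|_{L^2}=\|\op^{\rm w}(\gl_s\circ \pi(S))\,u\|_{L^2}.$$
Granted that $\op^{\rm w}(\gl_s\circ \pi(S))\in \Psi^s$ (verified in the next paragraph), this operator is bounded $\cH^s(\R^n)\to L^2(\R^n)$, and combined with $\|Su\|_{L^2}=\|u\|_{L^2}$ one obtains $\|Su\|_{\cH^s}\lesssim \|u\|_{\cH^s}$ on $\scrS(\R^n)$, and hence on all of $\cH^s(\R^n)$ by density.

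The one substantive step is therefore showing that $\gl_s\circ \pi(S)$ is again a Shubin symbol of order $s$. Since $\pi(S)\in \Sp(n)$ is a linear automorphism of $\R^{2n}$, it preserves $(1+|x|^2+|p|^2)^{1/2}$ up to multiplicative constants, and partial derivatives transform under the chain rule by a constant matrix; the Shubin estimates for $\gl_s$ therefore pass directly to $\gl_s\circ \pi(S)$. For an element $S\in \Mp^c(n)$ not in $\Mp(n)$ we write $S=e^{i\gvp}S_0$ with $S_0\in \Mp(n)$; the scalar factor affects none of the norms involved. Finally, the Schwartz statement follows from the identification $\scrS(\R^n)=\bigcap_{s\in \R}\cH^s(\R^n)$ with the projective limit topology, combined with the uniform continuity of $S$ on each $\cH^s$ just established. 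The main (minor) obstacle is the symbol check for $\gl_s\circ \pi(S)$; everything else is a formal consequence of \eqref{Egorov} and $L^2$-unitarity.
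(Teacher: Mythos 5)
Your proof is correct, and it rests on exactly the same two pillars as the paper's: $L^2$-unitarity of $S$ and the Egorov relation \eqref{Egorov}. The difference is how you climb the Sobolev scale. The paper's proof works only at $s=\pm 2$, using the characterization $\cH^2=\{u\in L^2: H_0 u\in L^2\}$ with the harmonic oscillator $H_0$, and then reaches all $s\in\R$ by iteration and interpolation. You instead pick the elliptic Shubin operator $\gL_s=\op^{\rm w}((1+|x|^2+|p|^2)^{s/2})$ for arbitrary real $s$, conjugate it by $S$ via \eqref{Egorov}, and check directly that $\gl_s\circ\pi(S)$ is again a Shubin symbol of order $s$ because $\pi(S)$ is a linear isomorphism of $\R^{2n}$, hence $\langle\pi(S)(x,p)\rangle\asymp\langle(x,p)\rangle$ and derivatives are transported by a constant matrix. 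This buys you a one-shot argument with no iteration or interpolation; the price is a (mild) symbol verification for a non-polynomial symbol, which the paper avoids by sticking to $H_0\in\Psi^2$, whose Weyl symbol is an explicit polynomial. Your observation about the central factor $e^{i\gvp}$ for $\Mp^c(n)$ is harmless and consistent with the paper's usage. The Schwartz statement is handled identically in both proofs.
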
 

\begin{proof} The metaplectic operators are unitaries on $L^2$. 
Let $u\in \cH^2$.  Clearly,
$$\cH^2=\{u\in L^2: H_0u\in L^2\}$$
with the harmonic oscillator $H_0$. 
So we simply observe that $Su\in L^2$ and that, according to \eqref{Egorov}, 
$$H_0Su = S(S^{-1}H_0S)u = S\op^{\rm w}(h_0\circ\pi(S^{-1}))u\in L^2.$$ 
Here, $h_0$ is the Weyl symbol of $H_0$, and the last inclusion holds, since 
$\op^{\rm w} (h_0\circ\pi(S^{-1}))\in \Psi^2$ and $u\in \cH^2$. 
Hence $S:\cH^2 \to \cH^2$ is bounded. 
Similarly, $u\in \cH^{-2}$ implies that $u=(I+H_0)u_0$ for some $u_0\in L^2$, since 
$I+H_0:L^2\to \cH^{-2}$ is an isomorphism. 
Hence 
$$Su = (S(I+H_0)S^{-1})S u_0 = \tilde H Su_0\in \cH^{-2}(\R^n), $$  
since $Su_0\in L^2$ and $\tilde H = S(I+H_0)S^{-1}\in \Psi^{2}$ is bounded
$L^2\to \cH^{-2}$.  
So, $S: \cH^{-2}\to \cH^{-2}$ is also bounded. Iteration and interpolation show the assertion for all $s\in \R $. The continuity on $\scrS$ follows, since $\scrS$ is the intersection of all $\cH^s$, $s\in \R$.  
\end{proof}


\section{The Resolvent Expansion}\label{ResolventExpansion}


Let $E$ be a (trivial) vector bundle over $\R^n$ and $H=\op (h) \in \Psi^\fm_{\rm cl}$ with $\fm=\ord H >0$, a pseudodifferential operator acting on sections of $E$. 
We assume that the  principal symbol $h_\fm$ of $h$ is  a Shubin symbol of order  $\fm$,   
homogeneous for $|(x,p)|\ge 1$, scalar (i.e.~a multiple of $id_E$) and strictly positive.

For  $0<\delta<\pi$ denote by $S_{\delta}$  the sector 
\begin{eqnarray}\label{sector}%
S_{\delta} = \{\lambda\in \mathbb C\setminus \{0\}:  |\arg\lambda - \pi |<\delta\}
\end{eqnarray}
and by $U_r(0)$  the disc of radius  $r>0$ about $0\in \C$. 
Standard pseudodifferential techniques show: 

\begin{proposition}\label{prop_resolvent}
For all $\lambda \in S_\gd$ with  $|\lambda| $ sufficiently large, $H-\lambda$ is invertible. 
The function $\lambda\mapsto (H-\gl)^{-1}$ extends meromorphically to 
$\C$, taking values in $\Psi^{-\fm}_{\rm cl}$.

In particular, we may assume that $H-\lambda$ is invertible for all 
$\gl \in S_\gd$ after replacing $H$ by $H+c$ for suitable $c>0$.  
\end{proposition}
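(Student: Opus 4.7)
The plan is to follow the classical Seeley-type construction of the resolvent, adapted to the isotropic Shubin calculus with parameter. The key ingredient is parameter-ellipticity of $h-\lambda$: since $h_\fm(x,p)$ is scalar and strictly positive while $\lambda\in\overline{S_\delta}$ avoids the positive real axis, $(h_\fm(x,p)-\lambda)^{-1}$ is well-defined and satisfies Shubin-type symbol estimates with $|\lambda|^{1/\fm}$ playing the role of an auxiliary cotangent-like weight. I would iteratively build a symbol $q(x,p,\lambda)$ with leading part $(h_\fm(x,p)-\lambda)^{-1}$ so that $Q(\lambda)=\op(q(\cdot,\cdot,\lambda))\in\Psi^{-\fm}_{\rm cl}$ satisfies $(H-\lambda)Q(\lambda)=I+R(\lambda)$ with $R(\lambda)$ smoothing and $\|R(\lambda)\|_{L^2\to L^2}\to 0$ as $|\lambda|\to\infty$ in $S_\delta$. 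Neumann inversion then gives $(H-\lambda)^{-1}=Q(\lambda)(I+R(\lambda))^{-1}\in\Psi^{-\fm}_{\rm cl}$ for $|\lambda|$ sufficiently large in $S_\delta$, yielding the first assertion.

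For the meromorphic extension, I would note that $H$ has compact resolvent: the Shubin Sobolev embedding $\cH^\fm(\R^n)\hookrightarrow L^2(\R^n)$ is compact, so the spectrum of $H$ is discrete and accumulates only at $\infty$. Combined with the previous step, this forces the spectrum inside $\overline{S_\delta}$ to lie in a compact set and hence to be finite. After shrinking $r>0$ so that $\overline{U_r(0)}$ contains at most one eigenvalue, the resolvent extends meromorphically to $S_\delta\cup U_r(0)$; the principal part at each pole consists of a finite-rank Riesz projection together with nilpotent contributions, all of which are smoothing, so the extension remains $\Psi^{-\fm}_{\rm cl}$-valued.

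Only finitely many eigenvalues $\lambda_1,\ldots,\lambda_N$ of $H$ lie in $\overline{S_\delta}\cup\overline{U_r(0)}$, and they are of bounded modulus. For $c>0$ sufficiently large, each translate $\lambda_j+c$ has positive real part dominating $|\im\lambda_j|$, so $|\arg(\lambda_j+c)|<\pi-\delta$ and $\lambda_j+c\notin\overline{S_\delta}$; the shifted resolvent $(H+c-\lambda)^{-1}$ is then holomorphic on all of $S_\delta$, giving the last assertion. The main technical work is the parametrix construction in the first paragraph, but this is entirely routine in the parameter-dependent Shubin calculus — essentially an isotropic analogue of Seeley's classical argument, a special case of the weakly parametric machinery of Grubb and Seeley recalled in the appendix — so I do not anticipate any serious obstacle beyond careful symbolic bookkeeping.
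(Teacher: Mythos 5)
Your proposal is correct and follows the approach the authors evidently have in mind; the paper itself offers no proof, saying only that the statement follows from "standard pseudodifferential techniques." The first part is exactly the parameter-dependent parametrix construction in the weakly parametric (isotropic/Shubin) calculus that the appendix (Theorem \ref{parametrix}) is set up to furnish, followed by a Neumann series; the second part uses the compactness of the embedding $\cH^\fm(\R^n)\hookrightarrow L^2(\R^n)$ to get a discrete spectrum and finite-rank, smoothing Riesz projections (the generalized eigenfunctions lie in $\scrS(\R^n)$ by elliptic regularity, so the principal parts stay in $\Psi^{-\infty}$); and the shift argument for the last claim is the standard one.

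One small point you leave tacit in the final step: you show that the finitely many eigenvalues lying in $\overline{S_\delta}\cup \overline{U_r(0)}$ are pushed out of $\overline{S_\delta}$ after adding $c$, but you should also observe that the shift cannot \emph{introduce} new eigenvalues into $S_\delta$. This is immediate from the geometry of the sector: $S_\delta$ is centered on the negative real axis, and for any $z\in\C\setminus\{0\}$ the function $c\mapsto |\arg(z+c)|$ is nonincreasing on $c>0$, so a point outside $\overline{S_\delta}$ (i.e.\ with $|\arg z|\le\pi-\delta$) stays outside under a rightward shift. With this one-line remark added, the proof is complete.
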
 

The following theorem is the crucial analytic result:

\begin{theorem}\label{thm_resolvent} 
For $g\in \U(n)$, $w=a-ik\in \C^n$, $A\in \Psi^\fa_{\rm cl}$, $\fa=\ord A\in \mathbb Z$,   
and  $K\in \N$ so large that $-K\fm +\fa<-2n$ we have 
\begin{enumerate}\renewcommand{\labelenumi}{(\alph{enumi})} 
\item The operator $R_gT_w A (H-\lambda)^{-K} $ is of trace class for all $\lambda$ in  $S_\gd$, $|\gl|$ sufficiently large. 

\item  The function $\lambda \mapsto \Tr(R_gT_w A (H-\lambda)^{-K}) $ is meromorphic in $S_\gd$  and near $\gl=0$. It is $O(|\gl|^{-\gve})$ with suitable $\gve>0$ for $\gl\to \infty$ in $S_\gd$. 

\item As $\gl\to \infty$ in the sector $S_\gd$ we have the expansion \eqref{resolvent_exp}, i.e. 
\begin{eqnarray*}%
\Tr(R_gT_w A (H-\lambda)^{-K}) 
&\sim& \sum_{j=0}^\infty c_j (-\lambda)^{(2m+\ord A-j)/\ord H-K} \\
&&+\sum_{j=0}^\infty (c_j' \ln (-\lambda)+c_j'')(-\lambda)^{-j-K}
\end{eqnarray*}
with suitable coefficients $c_j, c_j', c_j''$, $j=0,1,\ldots$.  
Here, $m=\dim_\C(\C^n)^g$  is the complex dimension of the fixed point set of $g$.  

\item If the affine mapping on $\mathbb{C}^n$ given by $v\mapsto gv+w$ has no fixed points, 
then the trace is $O(|\gl|^{-\infty})$. 

\item The product $\fm c_0'$ is independent of the choice of $H$.
\end{enumerate} 
\end{theorem}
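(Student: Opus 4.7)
The plan is to identify $\fm c_0'$ with an intrinsic quantity attached to $D=R_gT_wA$, through the residue of the operator zeta function $\zeta_D(z)=\Tr(DH^{-z})$ at $z=0$. The Mellin transform identity $\Gamma(z)\zeta_D(z)=\int_0^\infty t^{z-1}\Tr(De^{-tH})\,dt$, combined with the expansion in part (c) and the passage \eqref{resolvent_exp} $\leadsto$ \eqref{trace_exp} that will produce Theorem~\ref{thm_zeta}, gives $\tilde c_0'=\Res_{z=0}\zeta_D(z)$, and the universal constant linking $c_0'$ to $\tilde c_0'$ is independent of $H$. So it suffices to prove that $\fm\,\Res_{z=0}\Tr(DH^{-z})$ does not depend on the choice of $H$.

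I would then reduce to the case of a fixed order. Using the functional calculus, set $\tilde H=H^{1/\fm}$; by the spectral theorem this is a positive selfadjoint operator (one may replace it by $H^{N/\fm}$ with $N\in\N$ to stay in the pseudodifferential calculus). The substitution $w=\fm z$ gives
\begin{equation*}
\fm\,\Res_{z=0}\Tr(DH^{-z})=\Res_{w=0}\Tr(D\tilde H^{-w}),
\end{equation*}
so the claim is equivalent to the statement that $\Res_{w=0}\Tr(D\tilde H^{-w})$ is the same for every admissible auxiliary operator $\tilde H$ of order~$1$.

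For two such operators $\tilde H_0,\tilde H_1$ of order~$1$, I would interpolate by $\tilde H_s=(1-s)\tilde H_0+s\tilde H_1$, $s\in[0,1]$, which remains positive and of order $1$, and differentiate in~$s$:
\begin{equation*}
\frac{d}{ds}\Tr(D\tilde H_s^{-w})=-w\,\Tr\bigl(D\dot{\tilde H}_s\,\tilde H_s^{-w-1}\bigr),
\end{equation*}
with $\dot{\tilde H}_s=\tilde H_1-\tilde H_0$ a Shubin operator of order $\le 1$. Since $D\dot{\tilde H}_s$ still lies in $\scrA$, Theorem~\ref{thm_zeta} applies to it; its zeta function has, at $w=0$ (corresponding to $w+1=1$ in the shifted variable), at worst a simple pole, because $\Gamma$ is holomorphic at~$1$ and no collision with a $\Gamma$-pole can occur there. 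The prefactor $-w$ therefore kills this simple pole, yielding $\frac{d}{ds}\Res_{w=0}\Tr(D\tilde H_s^{-w})=0$, which closes the argument. As a cross-check, one can independently carry out the explicit local computation of $c_0'$ from the weakly parametric calculus in the Appendix: the $\log(-\lambda)\cdot(-\lambda)^{-K}$ contribution comes from integrating a $(-2m)$-homogeneous density built from the symbols of $R_gT_wA$ against $(h_\fm-\lambda)^{-K}$, and the change of variable $r=h_\fm(x,p)^{1/\fm}$ turns the level-set integral into an integral over the unit sphere of the fixed-point subspace $(\C^n)^g$, with the factor $\fm$ absorbed so that $h_\fm$ disappears from the result.

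The main obstacle is the verification that $\Tr(D\dot{\tilde H}_s\,\tilde H_s^{-w-1})$ carries only a simple pole at $w=0$. In the classical manifold case this is standard, but here the twist by $R_gT_w$ shifts the pole pattern of $\zeta_D$ (the leading exponent becomes $(2m+\fa)/\fm$ rather than $(2n+\fa)/\fm$), so one must inspect the proof of part~(c) carefully to confirm that no extra double pole can appear at $w=0$ of the shifted zeta of $D\dot{\tilde H}_s$; this boils down to checking that in the expansion \eqref{trace_exp} applied to $D\dot{\tilde H}_s$, the double-pole sites occur only at nonpositive integers. Once this is in place, the variation argument, together with the reduction to order $1$ and the Mellin identification of $c_0'$ with the residue, gives the desired $H$-independence of $\fm c_0'$.
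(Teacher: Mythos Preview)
Your proposal treats only part~(e), taking (a)--(d) as established; that is fine logically, since the paper also proves (e) as a byproduct of the explicit analysis behind (c). Your route, however, is genuinely different from the paper's. The paper reads off (e) directly from the symbolic computation: in the proof of Lemma~\ref{4.16} the coefficient of $\mu^{-K\fm}\ln\mu$ arises only from summands of the parametrix with $\ell=k=0$, and by Proposition~\ref{4.2}(b) the factor $d$ multiplying $(h_\fm+\mu^\fm)^{-K}$ in such a summand is a homogeneous component of the symbol of $A$ alone. Thus $C_0$---and hence $\fm c_0'$---never sees $H$. Your ``cross-check'' paragraph is essentially a sketch of this very argument, so you already have the paper's proof in hand.

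Your main route has a genuine gap: the identity
\[
\frac{d}{ds}\Tr\bigl(D\tilde H_s^{-w}\bigr)=-w\,\Tr\bigl(D\dot{\tilde H}_s\,\tilde H_s^{-w-1}\bigr)
\]
fails when $D=R_gT_wA$ does not commute with $\tilde H_s$, which is the generic case here. Duhamel's formula gives $\frac{d}{ds}e^{-t\tilde H_s}=-\int_0^t e^{-(t-u)\tilde H_s}\dot{\tilde H}_s e^{-u\tilde H_s}\,du$, and under $\Tr(D\,\cdot\,)$ cyclicity alone does not collapse this to $-t\,\Tr(D\dot{\tilde H}_s e^{-t\tilde H_s})$; equivalently, in the resolvent picture you get $\Tr\bigl(D(\lambda-\tilde H_s)^{-1}\dot{\tilde H}_s(\lambda-\tilde H_s)^{-1}\bigr)$, not $\Tr\bigl(D\dot{\tilde H}_s(\lambda-\tilde H_s)^{-2}\bigr)$. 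The classical Guillemin--Wodzicki argument sidesteps this by writing $\tilde H_1^{-w}=\tilde H_0^{-w}(I+wC(w))$ with $C(w)$ a holomorphic order-zero family, so that the difference of zeta functions acquires an explicit factor $w$; adapting that here would still require the pole-structure input from Theorem~\ref{thm_zeta} that you invoke, so the strategy is salvageable, but not with the derivative identity as written. Note also that appealing to Theorem~\ref{thm_zeta} inside the proof of Theorem~\ref{thm_resolvent}(e), while not circular (only (a)--(d) are used in deriving \ref{thm_zeta}), reverses the paper's order of dependence; the local argument avoids this entirely.
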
 

The proof of Theorem \ref{thm_resolvent} will be given in several steps and occupies the rest of the section. 
Clearly,  (a) and (b) follow from Proposition \ref{prop_resolvent} and the fact that the embedding 
$\cH^{2n+\gve}(\R^n) \hookrightarrow L^2(\R^n)$ 
is trace class for every $\gve>0$. For the remaining items, we observe:

\begin{lemma}\label{g}
We may assume that $g$ is diagonal and of the form 
\begin{equation} \label{eq-diag2}
g={\rm diag}\big(\underbrace{e^{i\varphi_1},...,e^{i\varphi_{m_1}}}_{m_1},\underbrace{i,...,i}_{m_2},\underbrace{-i,...,-i}_{m_3},\underbrace{-1,...,-1}_{m_4},\underbrace{1,...,1}_{m_5}\big)\in \U(n),
\end{equation}
where $\varphi_j\in {]-\pi,\pi[}\setminus  \pi\mathbb{Z}/2$ and   $m_5=\dim_\C(\mathbb{C}^n)^g$.   

{\rm For notational convenience we will also write  $\varphi_j=\pi/2$ for 
$j=m_1+1, \ldots, m_1+m_2$ and $\varphi_j = -\pi/2$ for $j= m_1+m_2+1,\ldots, m_1+m_2+m_3$. }
\end{lemma}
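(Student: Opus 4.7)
The plan is to diagonalize $g$ by a unitary conjugation and absorb that conjugation into the other factors in the trace, exploiting the cyclicity of the trace together with the intertwining relations \eqref{Egorov} and \eqref{cr12}.

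Every unitary matrix is diagonalizable by a unitary conjugation, so I would write $g = h d h^{-1}$ with $h\in\U(n)$ and $d\in\U(n)$ diagonal. The eigenvalues of $d$ lie on the unit circle and thus take the form $e^{i\varphi}$ with $\varphi\in(-\pi,\pi]$. Conjugating $d$ further by a permutation matrix (which lies in $\mathrm{O}(n)\subset\U(n)$, and which we absorb into $h$), we reorder the eigenvalues to match the block pattern of \eqref{eq-diag2}: first the generic $e^{i\varphi_j}$ with $\varphi_j\in(-\pi,\pi)\setminus(\pi\Z/2)$, then the repetitions of $i$, $-i$, $-1$, and finally of $1$. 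The size of the last block is the multiplicity of the eigenvalue $1$ of $g$, which equals $\dim_\C(\C^n)^g$; this fixes $m_5=\dim_\C(\C^n)^g$.

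To see that it suffices to prove the theorem under this diagonal assumption, I would use that $R\colon\U(n)\to\Mp^c(n)$ is a genuine unitary representation (as constructed in \cite[Proposition~1]{SaSch2}), so $R_h^{-1}R_gR_h=R_d$. By cyclicity of the operator trace,
\[
\Tr\bigl(R_gT_wA(H-\lambda)^{-K}\bigr)=\Tr\bigl(R_d\cdot(R_h^{-1}T_wR_h)\cdot(R_h^{-1}AR_h)\cdot R_h^{-1}(H-\lambda)^{-K}R_h\bigr).
\]
Relation \eqref{cr12} gives $R_h^{-1}T_wR_h=T_{h^{-1}w}$. The Egorov relation \eqref{Egorov} shows that $A':=R_h^{-1}AR_h\in\Psi_{\rm cl}^\fa$ has the same order $\fa$ and that $H':=R_h^{-1}HR_h\in\Psi_{\rm cl}^\fm$ has the same order $\fm$, with principal symbol $h_\fm\circ\pi(R_h)$, which is again scalar, positive and homogeneous. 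Hence
\[
\Tr\bigl(R_gT_wA(H-\lambda)^{-K}\bigr)=\Tr\bigl(R_dT_{h^{-1}w}A'(H'-\lambda)^{-K}\bigr),
\]
an expression of exactly the same shape but with $g$ replaced by the diagonal matrix $d$, $w$ by $h^{-1}w$, and $(A,H)$ by $(A',H')$ satisfying the same hypotheses. Since the parameters $(w,A,H)$ are otherwise arbitrary, the remaining assertions (c)--(e) of Theorem~\ref{thm_resolvent} for general $g$ follow from the corresponding assertions for $g$ of the form \eqref{eq-diag2}.

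The main (and essentially only) subtle point is the multiplicativity $R_h^{-1}R_gR_h=R_d$; granted this, one only checks the compatibility of the remaining invariants: conjugation preserves the dimension $m$ of the complex fixed-point set, so $m=m_5$, and the affine map $v\mapsto gv+w$ has a fixed point if and only if $v\mapsto dv+h^{-1}w$ does (they are conjugate via $h$). I do not expect any substantial obstacle beyond invoking the representation property of $R$.
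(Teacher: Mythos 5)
Your proof is correct and follows essentially the same route as the paper: diagonalize $g$ by a unitary conjugation, use the representation property of $R$ together with cyclicity of the trace, and invoke \eqref{Egorov} and \eqref{cr12} to absorb the conjugation into $A$, $H$, and $T_w$. The paper carries out the same computation on $\Tr(R_gT_wAH^{-z})$ rather than on the resolvent trace, and makes the same observation that $H'$ remains an admissible auxiliary operator; the only addition you make is the (correct and harmless) explicit remark that conjugation preserves the fixed-point dimension and the fixed-point condition for the affine map.
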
 

{\em Proof of Lemma} 3.3.
Since $g$ is diagonalizable, there exists   $u\in \U(n)$ such that $g=ug_0u^{-1}$ with $g_0$ diagonal and unitary. Then
\begin{eqnarray*}\lefteqn{
\Tr(R_gT_wA H^{-z})=\Tr(R_uR_{g_0}R_u^{-1}T_wA H^{-z})}\\
&=&\Tr( R_{g_0}(R_u^{-1}T_wR_u)(R_u^{-1} A  R_u)  (R_u^{-1}H^{-z}R_u))
=\Tr( R_{g_0}T_{w'} A' (H')^{-z}).
\end{eqnarray*}
Here $A'=R_u^{-1} A  R_u\in \Psi^\fa$ by \eqref{Egorov}, $R_u^{-1}T_wR_u= T_{w'}$ with $w'=u^{-1}w$ by \eqref{cr12}, and  $H'=R_{u^{-1}}HR_{u}$
is an admissible auxiliary operator of the same order as $H$, since the principal symbol is also scalar and positive by \eqref{Egorov}. 
(In the special case, where $H=H_0$ is the harmonic oscillator of \eqref{def_ham}, we even have $H'=H_0$, since $H_0$ commutes with $R_u$.) 
On the diagonal, $g_0$ has entries $e^{i\varphi}$.  As noted at the end of Section \ref{sect_algebra}, four values for 
$\varphi$ are special, namely $\varphi=0, \pm\pi/2, \pi$. 
They are listed according to their 
multiplicities in \eqref{eq-diag2}.  
\hfill $\Box$ 

In order to show (c), (d) and (e), we 
write $-\lambda = \mu^{\fm}$ for $\mu$ in the sector  
$$S=\{z\in \C\setminus \{0\}: |\arg z|<\delta/\fm\}.$$ 
So $h_\fm(x,p)+\mu^\fm$ is invertible for $\mu \in S$ and homogeneous of degree $\fm$
in $(x,p)$ and $\mu$  for $|(x,p)|\ge1$. We will then find  the  expansion in terms of $\mu$.    

By $q(y,p;\mu)$ denote the complete symbol of $A(H+\mu^\fm)^{-K}$ in $y$-form,  
i.e. 
\begin{eqnarray}\label{AHmu}
\Big(A(H+\mu^\fm)^{-K} \Big)u(x) = (2\pi)^{-n} \int e^{i\skp{x-y,p}} q(y,p;\mu) u(y) dydp. 
\end{eqnarray}
This yields the following expression for the trace: 

\begin{proposition}\label{prop_trace}
Up to terms that are $O(|\mu|^{-\infty})$ as $\mu\to \infty $ in $S$, \begin{eqnarray*}%
 \Tr(R_gT_w A(H+\mu^\fm)^{-K}) = 
 C_{\rm res} \int e^{i\phi(u,v)}\tr_E q(B(u,v) -b_0, y,\eta;\mu)\, dud\underline{v}dyd\eta.
\end{eqnarray*}
Here, 
$$C_{\rm res} = (2\pi)^{-n+\frac{m_2+m_3}2} 
\prod_{j=1}^{m_1}  \sqrt{1+i\tg\varphi_j} 
   \prod_{j=1}^{m_1+m_2+m_3}
 e^{\frac i4\ctg(\varphi_j/2)(k_j^2+a_j^2)}$$ 
(we take the square root with positive real part)
is a constant, 
$\tr_E$ is the trace in the vector bundle $E$, and 
$$(x_1,p_1, \ldots ,x_{n-m_5},p_{n-m_5})= B(u,v)-b_0,
\quad  u,v\in\mathbb{R}^{n-m_5},
$$ 
is an affine linear change of coordinates. 

As a mapping $ (u_j,v_j)\mapsto(x_j,p_j) $, $j=1, \ldots, n-m_5$, the matrix $B$ is a diagonal block matrix with entries $\frac1{\sqrt2}\mat{1&1\\-1&1}$ for  $j\not=m_1+1,\ldots,m_1+m_2+m_3$, for the latter it is the identity. 
In particular, $B$ is orthogonal. 
Moreover, 
$$b_0 = (u_{0,1}, v_{0,1}, \ldots, u_{0,n-m_5}, v_{0,n-m_5})$$  
with
$$(u_{0,j},v_{0,j} ) = \frac12\left(a_j-k_j\ctg\frac{\gvp_j}2, a_j\ctg\frac{\gvp_j}2+k_j\right). 
$$
Finally 
$y=(x_{n-m_5+1},\ldots, x_n)$, $\eta=(p_{n-m_5+1},\ldots ,p_n)$,
and `$d\underline v$' indicates that the integration is not over $v_{m_1+1}, \ldots ,v_{m_1+m_2+m_3}$; for these variables, $v_j$ is evaluated at the  points
$v_j = \sin\gvp_ju_{0,j}$.

The phase $\phi$ is given by 
\begin{eqnarray}%
\phi(u,v)&=&
-\sum_{j=1}^{m_1} (\lambda^-_ju_j^2 +\lambda^+_jv_j^2)-\sum_{j=m_1+1}^{m_1+m_2}u_j^2    
+\sum_{j=m_1+m_2+1}^{m_1+m_2+m_3}u_j^2  \nonumber \\        
&&-\sum_{j=m_1+m_2+m_3+1}^{n-m_5}(-u_j^2+v_j^2)\label{eq_phase}
\end{eqnarray}
with 
\begin{eqnarray}\label{lambdaj}
\gl_j^\pm  
= \frac{1}2  \frac{\sin \gvp_j \mp(1-\cos\gvp_j)}{\cos\gvp_j}.
\end{eqnarray}
\end{proposition}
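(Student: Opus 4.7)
The plan is to compute the Schwartz kernel of $R_gT_wA(H+\mu^{\fm})^{-K}$ explicitly, integrate along the diagonal, and then change variables to bring the resulting oscillatory integral into the stated form. By Lemma~\ref{g} we may assume $g$ is diagonal as in \eqref{eq-diag2}, so $R_g$ factors over coordinates as a product of one-variable operators with explicit kernels: the identity $\delta(x_j-z_j)$ for $\gvp_j=0$; the reflection $\delta(x_j+z_j)$ for $\gvp_j=\pi$; the Fourier kernel $(2\pi)^{-1/2}e^{\mp ix_jz_j}$ for $\gvp_j=\pm\pi/2$; and the Mehler kernel $\sqrt{(1-i\ctg\gvp_j)/(2\pi)}\,\exp(i[\tfrac12(x_j^2+z_j^2)\ctg\gvp_j - x_jz_j/\sin\gvp_j])$ otherwise. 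Writing $A(H+\mu^{\fm})^{-K}$ in $y$-form with symbol $q(y,p;\mu)$, composing with $T_w$ (shift by $a$ together with multiplication by $e^{ikz-iak/2}$), and precomposing with $R_g$, one obtains after setting $x=y$ and integrating an oscillatory integral on $\R^{3n}$ in the variables $(x,z,p)$ whose integrand factors over coordinate blocks.

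Next I perform the $z_j$-integration block by block. For $\gvp_j=0$ the delta collapses $z_j=x_j$ and only $e^{ik_jx_j - ia_jp_j}$ survives; the pair $(x_j,p_j)$ becomes the $(y,\eta)$-variables. For $\gvp_j=\pi$ the reflection delta gives $z_j=-x_j$ and produces the quadratic form $-2x_jp_j$ together with linear terms $-k_jx_j - a_jp_j$. For $\gvp_j=\pm\pi/2$ the $z_j$-integral yields $(2\pi)^{1/2}\delta(\mp x_j+k_j+p_j)$; this delta imposes a linear constraint on $(x_j,p_j)$, leaving a single integration variable in this block together with a residual quadratic phase $\mp x_j^2$ plus linear terms. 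For generic $\gvp_j$ one performs a genuine Gaussian integration in $z_j$, yielding the factor $\sqrt{2\pi/(-i\ctg\gvp_j)}$; combined with the Mehler normalization under the branch convention \eqref{eq-mehler1}, this produces $\sqrt{1+i\tg\gvp_j}$ together with a quadratic remainder in $(x_j,p_j)$ and linear terms inherited from $T_w$.

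After all $z_j$-integrations, the trace is an oscillatory integral on at most $\R^{2n}$ in the $(x,p)$-variables. I then introduce $(u,v)$ by $(x_j,p_j)=B_j(u_j,v_j)-(u_{0,j},v_{0,j})$. The shift $b_0$ is chosen so as to eliminate the linear terms of the quadratic phase: the resulting two linear equations in each block have the unique solution stated, which one verifies via the identity $\ctg(\gvp/2)=(1+\cos\gvp)/\sin\gvp$. The pure quadratic remainder is then diagonalized by the orthogonal rotation $B_j=\frac1{\sqrt2}\begin{pmatrix}1&1\\-1&1\end{pmatrix}$ for the generic and $\pi$ blocks; a direct calculation shows that for generic $\gvp_j$ it takes the form $-(\gl_j^-u_j^2+\gl_j^+v_j^2)$ with $\gl_j^\pm$ as in \eqref{lambdaj}, while for $\gvp_j=\pi$ the analogous computation yields $u_j^2-v_j^2$. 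For the $\pm\pi/2$ blocks $B_j$ is the identity and the delta constraint becomes the linear condition on $v_j$ stated in the proposition, leaving only $u_j$ free with residual phase $\mp u_j^2$.

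The prefactor $C_{\rm res}$ then assembles from the $(2\pi)^{-n}$ of the $y$-form quantization, the $\sqrt{1+i\tg\gvp_j}$ from each generic block, the $(2\pi)^{1/2}$ from each $\pm\pi/2$ block, the $e^{-iak/2}$ from $T_w$, and the constant $\prod_j e^{(i/4)\ctg(\gvp_j/2)(k_j^2+a_j^2)}$ produced by completing the square via the shift $b_0$; the Jacobian of $B$ is trivial by orthogonality. The main obstacle will be the careful bookkeeping of branches of the complex square roots: verifying that the Mehler factor $\sqrt{(1-i\ctg\gvp_j)/(2\pi)}$ and the Gaussian constant $\sqrt{2\pi/(-i\ctg\gvp_j)}$ combine, under the convention \eqref{eq-mehler1}, to $\sqrt{1+i\tg\gvp_j}$ with positive real part, and that the various constant phases assemble cleanly with the $e^{-iak/2}$ of $T_w$. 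The $O(|\mu|^{-\infty})$ error absorbs the contribution of the residual (smoothing) part of $q$, which is rapidly decaying in $\mu$ in the sense of the Grubb--Seeley weakly parametric calculus.
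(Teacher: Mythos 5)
Your approach is essentially identical to the paper's: compute the Schwartz kernel of $R_gT_wA(H+\mu^{\fm})^{-K}$ via Mehler's formula, restrict to the diagonal, carry out the Gaussian (resp.\ delta) integration in the auxiliary variable of each coordinate block, and then make the affine orthogonal change of variables that kills the linear terms and diagonalizes the quadratic form. The branch bookkeeping for $\sqrt{1-i\ctg\gvp_j}\cdot\sqrt{1/(-i\ctg\gvp_j)}=\sqrt{1+i\tg\gvp_j}$ and the computation of $\gl_j^\pm$ match the paper as well.

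There is one step you glance past that is a genuine gap. For the fixed-point indices $j>n-m_5$ (where $\gvp_j=0$) you correctly observe that the diagonal kernel contributes the phase $e^{ik_jx_j-ia_jp_j}$, and then immediately identify $(x_j,p_j)$ with the $(y,\eta)$-variables. But the formula in the proposition has no such linear phase on the $(y,\eta)$-block: $\phi$ depends only on $(u,v)$ with $j\le n-m_5$. If some $a_j$ or $k_j$ is nonzero for $j>n-m_5$, the expression you have derived is not the claimed one. What is missing is the reduction, stated explicitly in the paper just before the proof: one first shows by repeated integration by parts in $x_j$ (resp.\ $p_j$) that whenever $a_j\ne0$ or $k_j\ne0$ for some $j>n-m_5$, the whole trace is $O(|\mu|^{-\infty})$ (this is Theorem \ref{thm_resolvent}(d)), so one may and does assume $a_j=k_j=0$ for $j>n-m_5$, which is also why $b_0$ carries no entries in those slots. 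Without that reduction, your final expression has spurious linear phases over $(y,\eta)$. A secondary, smaller point: you assert rather than verify that the completed-square constants $\delta_j$, combined with $e^{-ia_jk_j/2}$ from $T_w$ and the $\pm\pi/2$-block contributions, assemble to $\prod e^{\frac{i}{4}\ctg(\gvp_j/2)(k_j^2+a_j^2)}$; the identity $\delta_j-a_jk_j/2-(k_j^2/2)\tg\gvp_j=\frac14(a_j^2+k_j^2)\ctg(\gvp_j/2)$ and its analogues in the $\pm\pi/2$ blocks are nontrivial enough to merit being carried out.
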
 

It will turn out that the expansion is $O(|\mu|^{-\infty})$ whenever $a_j\not=0$ or $k_j\not=0$ for some $j>n-m_5$. This will show (d). We will thus assume $a_j=k_j=0$ for these $j$. 
This is also the reason why  $b_0$ has no entries for $j>n-m_5$.

\begin{proof}
The proof is  divided  into several steps. 

{\em Step} 1. 
From \eqref{AHmu} we obtain the continuous Schwartz kernel 
\begin{eqnarray*}%
K_{T_wA(H+\mu^\fm)^{-K}}(x,y) = (2\pi)^{-n} e^{-i\skp{a,k}/2} 
\int e^{i\skp{x-a-y,p}+i\skp{k,x}} q(y,p;\mu)\, dy dp.
\end{eqnarray*}
Mehler's formula for $R_g$ shows that 
\begin{eqnarray*}
\lefteqn{K_{R_gT_wA(H+\mu^\fm)^{-K}}(x,y)}\\
&=& 
 (2\pi)^{-n} e^{-i\skp{a,k}/2}\prod_{j=1}^{m_1+m_2+m_3} 
\sqrt{\frac{1-i\ctg\varphi_j}{2\pi}}
\int  e^{i\phi_1(x,y,p,y')} q(y,p;\mu) \, dp dy'
\end{eqnarray*}
with $y' = (y'_1,\ldots, y'_{m_1+m_2+m_3})$ and 
\begin{eqnarray*}
\lefteqn{\phi_1(x,y,p ,y')}\\  
&=&\sum_{j=1}^{m_1+m_2+m_3}
\Big(
-\frac{x_jy_j'}{\sin\varphi_j} + \frac{\ctg \varphi_j}{2}(x_j^2+{y_j'}^2 )
+ (y_j' -a_j-y_j)p_j +k_jy_j'
\Big) \\   
&&+\sum_{j=m_1+m_2+m_3+1}^{n-m_5}((-x_j -a_j-y_j)p_j-k_jx_j )\\ 
&&+\sum_{j=n-m_5+1}^n((x_j -a_j-y_j)p_j+k_jx_j ).
\end{eqnarray*}

{\em Step} 2. 
The trace of  $R_gT_wA(H+\mu^{\fm})^{-K}$  is  the integral of the Schwartz kernel over the diagonal. As 
$\ctg\gvp_j=0$ for $\gvp_j=\pm\frac\pi2$, we obtain 
\begin{eqnarray}\label{TrRTA}
\Tr(R_gT_wA(H+\mu^{\fm})^{-K})&=& (2\pi)^{-n-(m_2+m_3)/2} e^{-i\skp{a,k}/2}\prod_{j=1}^{m_1}
\sqrt{\frac{1-i\ctg\varphi_j}{2\pi}}\nonumber\\ 
&&\times 
\int e^{i\phi_2(x,p,y')} \tr_Eq(x,p;\mu)\, dpdy'dx
\end{eqnarray}
with 
\begin{eqnarray*}%
\lefteqn{\phi_2(x,p,y')}\\  &=&
\sum_{j=1}^{m_1+m_2+m_3}\left(-\frac{x_jy_j'}{\sin\varphi_j}+\frac{\ctg\varphi_j}2(x_j^2+{y_j'}^2)
+(y_j'-a_j-x_j)p_j+k_jy_j'\right)\\
&&-\sum_{j=m_1+m_2+m_3+1}^{n-m_5}((2x_j +a_j)p_j+k_jx_j )+\sum_{j=n-m_5+1}^n( -a_j p_j+k_jx_j ).
\end{eqnarray*}

If  $a_j\ne 0$ or $k_j\ne 0$ for some  $j>n-m_5$ 
(i.e., if the affine mapping 
$z\mapsto gz+w$ has no fixed points) 
then integration by parts in \eqref{TrRTA} shows that the trace is 
$O(\mu^{-\infty})$. 
This proves statement (d) in Theorem \ref{thm_resolvent}.
These terms do not contribute to the asymptotic expansion, 
and so we shall assume in the sequel  that  $a_j=k_j=0$ for all $j>n-m_5.$

{\em Step} 3. The formula for  Gaussian integrals
\begin{eqnarray*}%
\int e^{-ax^2+bx+c}\, dx = \sqrt{\frac\pi a} e^{b^2/4a +c}
\end{eqnarray*}
and the fact that the one-dimensional Fourier transform 
$\frac1{\sqrt{2\pi}} \int e^{-i\omega t} dt$ of the constant function $1$ is $\sqrt{2\pi}\delta$ imply that the  integrals over $y_j'$ in \eqref{TrRTA} are 
\begin{multline*}
\int  \exp \Biggl(i 
\left( \frac{\ctg\varphi_j}2 {y_j'}^2 
+ y_j'\left( p_j+k_j -\frac{x_j }{\sin\varphi_j}\right)\right)\Biggr)dy'_j\\
=
\left\{
\begin{array}{ll}
\displaystyle \sqrt{\frac{2\pi}{-i\ctg\varphi_j}}\ \exp \Bigl(-\frac i 2 
\tg\varphi_j\Bigl(p_j+k_j-\frac{x_j }{\sin\varphi_j}\Bigr)^2\Bigr), 
& \text{if }\varphi_j\notin \pi\mathbb{Z}/2,\\
\displaystyle 2\pi \;\;\delta\left(p_j+k_j- \frac{x_j}{\sin\varphi_j}\right), & \text{if }\varphi_j=\pm\pi /2.
\end{array}
\right.
\end{multline*}
Here we take the  branch of the square root determined by
$$
\sqrt{\frac{2\pi}{-i\ctg\varphi_j}}=
\sqrt{\frac{2\pi}{|\ctg\varphi_j|}}e^{i\frac\pi 4\sgn\ctg\varphi_j}.
$$
This formula can be obtained from the Gaussian integral inserting a factor $e^{-\varepsilon {y'_j}^2}$ and considering the limit $\varepsilon\to 0^+$.

For $p=(p',p'')$ with  $p'=(p_1,\ldots,p_{m_1},p_{m_1+m_2+m_3+1},\ldots,p_n)$ and $p''=(p_{m_1+1},\ldots ,p_{m_1+m_2+m_3})$,  integration of the $\delta$-functions for  $\gvp_j = \pm \pi/2$ yields
$$p_j= \frac{x_j}{\sin\varphi_j} -k_j= \pm x_j-k_j, \quad j=m_1+1,\ldots,m_1+m_2+m_3.$$
Inserting this into Equation \eqref{TrRTA} we find that 
\begin{eqnarray*}
\lefteqn{
\Tr(R_gT_wA(H+\mu^{M})^{-K})}\\ 
&=&  (2\pi)^{-n+\frac{m_2+m_3}2} e^{-i\skp{a,k}/2}
\prod_{j=1}^{m_1}\Big( \sqrt{\frac{1-i\ctg\varphi_j}{2\pi}}
\sqrt{\frac{2\pi}{-i\ctg\varphi_j}} 
\Big)\\ 
&&\times \int e^{i\phi_3(x,p')} \tr_Eq(x,p',p'';\mu)_{|p''_j = \pm x_j-k_j}\, dp'dx.
\end{eqnarray*}
Here we have  (see Mehler's formula and Gaussian integration above)
\begin{multline*}
\sqrt{ 1-i\ctg\varphi_j } \sqrt{\frac{1}{-i\ctg\varphi_j}}=
\frac{\exp(-i(\frac\pi 4\sgn\varphi_j-\frac{\varphi_j} 2))}{|\sin\varphi_j|^{1/2}}
\sqrt{\frac{1}{|\ctg\varphi_j|}}e^{i\frac\pi 4\sgn\ctg\varphi_j}\\
=\frac{e^{i\frac{\varphi_j} 2}}{|\cos\varphi_j|^{1/2}}e^{i\frac\pi 4(-\sgn\varphi_j+\sgn\ctg\varphi_j)} 
=\sqrt{1+i\tg\varphi_j},
\end{multline*}
where we take the square root with positive real part.

The notation $ q(x,p',p'';\mu)_{|p''_j = \pm x_j-k_j}$ indicates that we 
evaluate $p''_j$ at $x_j-k_j$ for $j=m_1+1,\ldots, m_1+m_2$ and at $-x_j-k_j$ 
for $j=m_1+m_2+1,\ldots, m_1+m_2+m_3$. The phase $\phi_3$ is given by
\begin{eqnarray}\label{phase3}
\lefteqn{\phi_3(x,p')=
\sum_{j=1}^{m_1}
     \Bigl(
        x_j^2\Bigl(\frac{\ctg\varphi_j}2-\frac{1}{ \sin 2\varphi_j }\Bigr)
       +x_jp_j\Bigl(\frac{1}{\cos\varphi_j}-1 \Bigr)}\\
       &&-p_j^2\frac{\tg\varphi_j}2  
       +x_j\frac{k_j}{\cos\varphi_j}
       -p_j\left( a_j+k_j\tg\varphi_j\right)-\frac{ \tg\varphi_j}{2}k_j^2
      \Bigr)\nonumber  \\
&&-\sum_{j=m_1+1}^{m_1+m_2+m_3}(x_j+a_j)\left(\frac{x_j}{\sin\varphi_j}-k_j\right)   \nonumber  \\  
&&-\sum_{j=m_1+m_2+m_3+1}^{n-m_5}( 2x_jp_j +a_j p_j+k_jx_j ).\nonumber
\end{eqnarray}
Since 
\begin{eqnarray*}%
\ctg\varphi_j -\frac1{\sin\varphi_j\cos\varphi_j }
&=& \frac{\cos^2\varphi_j -1}{\sin\varphi_j\cos\varphi_j }
=-\tg\varphi_j\text{ and}\\
\frac1{\cos\varphi_j}-1&=& \tg\varphi_j \ \frac{1-\cos\varphi_j}{\sin\gvp_j} ,
\end{eqnarray*}
we see that 
\begin{eqnarray*}
\phi_3(x,p) &=& 
-\sum_{j=1}^{m_1} \frac{\tg\gvp_j}2
\Big(x_j^2\ 
-2x_jp_j\Big(\frac{1-\cos\varphi_j}{\sin\gvp_j}\Big)
+p_j^2 \\
&&-2x_j\frac{k_j}{\sin \varphi_j}
+2p_j\Big(
\frac{a_j}{\tg \gvp_j} 
+k_j\Big)+k_j^2\Big)\\
&&-\sum_{j=m_1+1}^{m_1+m_2+m_3}(x_j+a_j)\left(\frac{x_j}{\sin\varphi_j}-k_j\right) \\      
&&-\sum_{j=m_1+m_2+m_3+1}^{n-m_5}( 2x_jp_j +a_j p_j+k_jx_j ).
\end{eqnarray*}

{\em Step} 4. We shall next make a transformation in the first $m_1$ coordinate pairs $(x_j,p_j)$ of the form 
\begin{eqnarray*}
\mat{x\\p} = B'\mat{u\\v} -b'_0, \qquad b'_0=\mat{u'_0\\v'_0}
\end{eqnarray*}
that results in purely quadratic terms plus constants in the phase: Let 
\begin{eqnarray}
\ga_j = -2\frac{1-\cos\gvp_j}{\sin\gvp_j}, \quad  
\gb_j =-\frac{2k_j}{\sin\gvp_j},\quad 
\gg_j = \frac{2a_j}{\tg \gvp_j}+2k_j.\label{gammaj}
\end{eqnarray}

Due to the special structure of $\phi_3$ we can do this separately for all coordinate pairs $(x_j,p_j)$. In the following short computation 
we therefore suppress the subscripts $j$ and the prime. 
We let
\begin{eqnarray}\label{change}%
\mat{x\\p} = \frac1{\sqrt2} \mat{1&1\\-1&1}\mat{u\\v} -\mat{u_0\\v_0}.
\end{eqnarray}
Then 
\begin{eqnarray*}%
x&=&\frac1{\sqrt2}(u+v)-u_0\\
p&=&\frac1{\sqrt2}(-u+v)-v_0
\end{eqnarray*}
and
\begin{eqnarray*}\lefteqn{
( x^2+\ga xp + p^2) + \gb x + \gg p}\\
&=&\frac12\Big( (2-\ga)u^2 +(2+\ga)v^2 \Big)\\
&&- \frac u{\sqrt2}((u_0-v_0)(2-\ga)-\gb+\gg)\\
&&-\frac v{\sqrt2}((u_0+v_0)(2+\ga)-\gb-\gg) \\
&&+u_0^2+v_0^2+\ga u_0v_0 -\gb u_0 -\gg v_0.
\end{eqnarray*}
As we want the coefficients of $u$ and $v$ to vanish, we obtain the equations 
\begin{eqnarray*}%
u_0-v_0= \frac{\gb-\gg}{2-\ga}\ \text{ and } \ 
u_0+v_0= \frac{\gb+\gg}{2+\ga},
\end{eqnarray*}
and hence 
\begin{eqnarray*}%
u_0&=& \frac12 \left(\frac{\gb+\gg}{2+\ga}+\frac{\gb-\gg}{2-\ga} \right) 
= \frac{2\gb-\gg\ga}{4-\ga^2}= \frac12\left(a-k\frac{1+\cos \gvp}{\sin \gvp}\right) \label{u0}
= \frac{a-k\ctg(\gvp/2)}2;\\
v_0&=& \frac12 \left(\frac{\gb+\gg}{2+\ga} - \frac{\gb-\gg}{2-\ga}\right) 
= \frac{2\gg-\gb\ga}{4-\ga^2} = \frac12\left(a\frac{1+\cos \gvp}{\sin \gvp}+k \right)=
\frac{a\ctg(\gvp/2)+k}2\label{v0}.
\end{eqnarray*}

Changing coordinates from $(x_j,p_j)$ to $(u'_j,v'_j)$ according to 
\eqref{change}, we obtain 
\begin{eqnarray*}%
\lefteqn{
\Tr(R_gT_wA(H+\mu^{\fm})^{-K})}\\ 
&=&  (2\pi)^{-n+\frac{m_2+m_3}2} e^{-i\skp{a,k}/2}
\prod_{j=1}^{m_1} 
\Big(\sqrt{1+i\tg\varphi_j}\;e^{i(\delta_j-(k^2_j/2)\tg\varphi_j)}\Big)\\
&&\times
\int e^{i\phi_4(u',v',x_{m_1+1},\ldots,x_{n-m_5},p_{m_1+m_2+m_3+1}\ldots, p_{n-m_5} )} 
\\
&&\times q(B'\mat{u\\v}-b'_0,x_{m_1+1},\ldots,x_n,p_{m_1+1},\ldots,p_n;\mu)_{|p_j= \pm x_j-k_j, j=m_1+1,\ldots,m_1+m_2+m_3}\\
&&\times \, du_1\ldots du_{m_1}dv_1\ldots dv_{m_1} dx_{m_1+1} \ldots dx_n
dp_{m_1+m_2+m_3+1} \ldots dp_n
\end{eqnarray*}
with $B'$ resulting from the matrices $\frac1{\sqrt2} \mat{1&1\\-1&1}$ above,   
\begin{eqnarray*}%
\phi_4 &=& -\sum_{j=1}^{m_1} (\lambda^-_ju_j^2 +\lambda^+_jv_j^2)
-\sum_{j=m_1+1}^{m_1+m_2+m_3}(x_j+a_j)\left(\frac{x_j}{\sin\varphi_j}-k_j\right)   \\    
&&-\sum_{j=m_1+m_2+m_3+1}^{n-m_5}( 2x_jp_j +a_j p_j+k_jx_j ),
\end{eqnarray*}
\begin{eqnarray*}
\gl_j^\pm =\frac{\tg\varphi_j}4 (2\pm \ga_j )= \frac{1}2  \frac{\sin \gvp_j \mp(1-\cos\gvp_j)}{\cos\gvp_j}
\end{eqnarray*}
and 
\begin{eqnarray}\label{deltaj}%
\delta_j =-\frac{\tg \varphi_j} 2( u_{0j}^2 +v_{0j}^2 +\alpha_j u_{0j} v_{0j} -\gb_ju_{0j} -\gg_jv_{0j}),
\end{eqnarray}
where $\ga_j, \gb_j,\gg_j, u_{0j}, v_{0j}$ are the quantities denoted by $\ga,\gb,\gg,u_0,v_0$ 
above.  

Note that $\gl^\pm_j\not=0$, since 
$\lambda_j^\pm=0$  would require that 
\begin{eqnarray*}%
 \cos \gvp_j + \sin \gvp_j = 1 \text{ or } \cos\gvp_j-\sin\gvp_j=1 
\end{eqnarray*}
or, equivalently, that 
$$\sin(\gvp_j+\frac\pi4) = \frac{\sqrt2}{2} \text{ or } 
\cos(\gvp_j+\frac{\pi}4) = \frac{\sqrt2}{2}.$$ 
This, however, is not the case for $\varphi_j\notin \frac{\pi}{2} \mathbb Z$.

Direct computations show that
\begin{align*}
&u_{0j}^2 +v_{0j}^2 +\alpha_j u_{0j} v_{0j} -\gb_ju_{0j} -\gg_jv_{0j}=
\frac{\alpha_j\beta_j\gamma_j-\beta^2_j-\gamma^2_j}{4-\alpha^2_j}\\
&=\frac{\frac{8(1-\cos\varphi_j)}{\sin^2\varphi_j} (a_jk_j\ctg\varphi_j+k^2_j)
-4k^2\left(\frac1{\sin^2\varphi_j}+1\right)-4a_j^2\ctg^2\varphi_j-8a_jk_j\ctg\varphi_j}{\frac{8\cos\varphi_j}{1+\cos\varphi_j}}\\
&=-4a_j^2\ctg^2\varphi_j-8a_jk_j\frac{\cos^2\varphi_j}{\sin\varphi_j(1+\cos\varphi_j)}
+4k_j^2\frac{\cos\varphi_j(\cos\varphi_j-2)}{\sin^2\varphi_j}.
\end{align*}
Hence,
$$
\delta_j=\frac{a_j^2}{4}\frac{1+\cos\varphi_j}{\sin\varphi_j}+\frac{a_jk_j}{2}
-\frac{k_j^2}4 \frac{(\cos\varphi_j-2)(\cos\varphi_j+1)}{\cos\varphi_j\sin\varphi_j}.
$$
Therefore, we obtain
$$
\delta_j-a_jk_j/2-(k_j^2/2)\tg\varphi_j=(a_j^2+k_j^2)\left(\frac{1+\cos\varphi_j}{\sin\varphi_j} \right)=(a_j^2+k_j^2)\ctg(\varphi_j/2).
$$
{\em Step} 5. 
Next we change coordinates for the $(x_j,p_j)$, $j=m_1+1,\ldots ,m_1+m_2+m_3$, where $\gvp_j=\pm \frac\pi2$.
For 
\begin{eqnarray*}
u_j &=&x_j+\frac{a_j-k_j\sin\gvp_j}{2}= x_j+\frac{a_j\mp k_j}{2},\\
v_j &=&p_j+\frac{a_j\sin\gvp_j+k_j}{2} = p_j +\frac{\pm a_j+k_j}{2}.
\end{eqnarray*}
\begin{eqnarray*}\lefteqn{(x_j+a_j)\left(\frac{x_j}{\sin\gvp_j} -k_j\right) }\\
&=& \frac1{\sin\gvp_j}
\Big( x_j+\frac{a_j-k_j\sin\gvp_j}2\Big)^2
-\frac1{\sin\gvp_j}\Big(\frac{a_j-k_j\sin\gvp_j}{2}\Big)^2
-a_jk_j\\
&=& \frac1{\sin\varphi_j}u_j^2-\frac1{\sin\gvp_j}\Big(\frac{a_j-k_j\sin\gvp_j}{2}\Big)^2-a_jk_j\\
&=& \pm u_j^2\mp\Big(\frac{ a_j\mp k_j}{2}\Big)^2-a_jk_j.
\end{eqnarray*}

We therefore obtain 
\begin{eqnarray}%
\lefteqn{
\Tr(R_gT_wA(H+\mu^{\fm})^{-K})}\nonumber\\ 
&=&  (2\pi)^{-n+\frac{m_2+m_3}2} e^{-i\skp{a,k}/2}
\prod_{j=1}^{m_1} 
\Big(\sqrt{1+i\tg\varphi_j}\;e^{i\delta_j-i(k_j^2/2)\tg\varphi_j}\Big)\nonumber\\
&&\times \prod_{j=m_1+1}^{m_1+m_2}
 e^{i\big(\frac{k_j-a_j}{2}\big)^2+ia_jk_j}  \prod_{j=m_1+m_2+1}^{m_1+m_2+m_3}
 e^{-i\big(\frac{k_j+a_j}{2}\big)^2+ia_jk_j}\nonumber\\
&&\times \int e^{i\phi_5(u,v,x''',p''')} 
\nonumber\\
&&\times 
\tr_Eq(B'\mat{u'\\v'}-b'_0,\mat{u''\\v''}-b_0'',\underline x''',\underline p''';\mu)%
_{|v''_j= \sin \varphi_j u''_j}  
\label{uv}\\
&&\times \, du'dv'du'' d\underline x'''d\underline p'''\nonumber
\end{eqnarray} 
with 
\begin{eqnarray*}%
b''_0 &=&\mat{\frac{a_j-k_j\sin\gvp_j}{2}\\\frac{a_j\sin\gvp_j +k_j}{2}},\\
u'&=&(u_1,\ldots, u_{m_1}), 
v'=(v_1,\ldots, v_{m_1}),\\
u''&=&(u_{m_1+1},\ldots, u_{m_1+m_2+m_3}), 
v''=(v_{m_1+1},\ldots, v_{m_1+m_2+m_3}),\\ 
x'''&=&(x_{m_1+m_2+m_3+1},\ldots,x_{n-m_5}), \\
\underline x'''&=&(x_{m_1+m_2+m_3+1},\ldots,x_{n}),\\ 
p'''&=&(p_{m_1+m_2+m_3+1},\ldots,p_{n-m_5}),\\
\underline p'''&=&(p_{m_1+m_2+m_3+1},\ldots,p_{n}),
\end{eqnarray*}
and the phase 
\begin{eqnarray*}%
\phi_5&=& 
 -\sum_{j=1}^{m_1} (\lambda^-_ju_j^2 +\lambda^+_jv_j^2)
-\sum_{j=m_1+1}^{m_1+m_2}u_j^2    
+\sum_{j=m_1+m_2+1}^{m_1+m_2+m_3}u_j^2   \\    
&&-\sum_{j=m_1+m_2+m_3+1}^{n-m_5}( 2x_jp_j +a_j p_j+k_jx_j ).
\end{eqnarray*}

Note that we can rewrite the factor in front of the integral as 
\begin{eqnarray*}
\lefteqn{(2\pi)^{-n+\frac{m_2+m_3}2} 
\prod_{j=1}^{m_1} 
 \sqrt{1+i\tg\varphi_j}
 \nonumber}\\
&&\times \prod_{j=1}^{m_1+m_2+m_3}
 e^{\frac i4\ctg(\varphi_j/2)(k_j^2+a_j^2)}\prod_{j=m_1+m_2+m_3+1}^{n-m_5} e^{-ia_jk_j/2} . \nonumber\\
\end{eqnarray*}

{\em Step} 6. Our final change of coordinates is the following: For 
$j=m_1+m_2+m_3+1,\ldots ,n-m_5$, we let 
$$
\mat{x_j\\p_j} =
\frac1{\sqrt2}\mat{1&1\\-1&1}\mat{u_j\\v_j}- \mat{a_j/2\\k_j/2}
$$
so that 
\begin{eqnarray*}%
x_j &=& \frac1{\sqrt2}(u_j+v_j) -a_j/2\\
p_j&=& \frac1{\sqrt2} (-u_j+v_j)-k_j/2
\end{eqnarray*}
and 
$$2x_jp_j + a_jp_j+k_jx_j=-u_j^2 +v_j^2-\frac{a_jk_j}2.$$
This leads to 
\begin{eqnarray*}
\lefteqn{
\Tr(R_gT_wA(H+\mu^{\fm})^{-K})}\\ 
&=& 
 (2\pi)^{-n+\frac{m_2+m_3}2} 
\prod_{j=1}^{m_1}  \sqrt{1+i\tg\varphi_j} 
   \prod_{j=1}^{m_1+m_2+m_3}
 e^{\frac i4\ctg(\varphi_j/2)(k_j^2+a_j^2)} \\ 
&&\times 
\int e^{i\phi_6(u',v',u'',u''',v''')} 
\\
&&\times 
\tr_Eq(B'\mat{u'\\v'}-b'_0,\mat{u''\\v''}-b_0'',B'''\mat{u'''\\v'''}-b_0''',x'''',p'''';\mu\Big)%
_{|v''_j= \sin\gvp_j u''_j}  \\
&&\times \, du'dv'du'' du''' dv'''dx''''dp'''',
\end{eqnarray*} 
with $B''$ made up from the matrices $\frac1{\sqrt2}\mat{1&1\\-1&1}$, 
\begin{eqnarray*}%
b'''_0&=& \mat{a_j/2\\k_j/2},\\
u'''&=&(x_{m_1+m_2+m_3+1},\ldots ,x_{n-m_5}),\\
v'''&=&(p_{m_1+m_2+m_3+1},\ldots ,p_{n-m_5}),\\
x''''&=&(x_{n-m_5+1},\ldots, x_n),\\
p''''&=&(p_{n-m_5+1},\ldots, p_n),
\end{eqnarray*}
and 
\begin{eqnarray*}%
\phi_6&=&
-\sum_{j=1}^{m_1} (\lambda^-_ju_j^2 +\lambda^+_jv_j^2)-\sum_{j=m_1+1}^{m_1+m_2}u_j^2    
+\sum_{j=m_1+m_2+1}^{m_1+m_2+m_3}u_j^2   \\        
&&-\sum_{j=m_1+m_2+m_3+1}^{n-m_5}(-u_j^2+v_j^2).
\end{eqnarray*}
This completes the proof of Proposition \ref{prop_trace}.
\end{proof}

?
\subsection*{Proof of Theorem \ref{thm_resolvent}}
To simplify the notation, we  rewrite the expression found
in Proposition \ref{prop_trace} 
in the form 
\begin{eqnarray}\label{eq:simple} 
\Tr(R_gT_w A(H+\mu^\fm)^{-K}) = 
 C_{\rm res} \int_{\R^{n'}} e^{i\phi(\tilde w)}\tilde q(\tilde B\tilde w-\tilde b_0;\mu)\, d\tilde w.
\end{eqnarray}
Here, 
\begin{itemize} 
\item 
$\tilde q$ is $\tr_Eq$ with  $v_{m_1+1} ,\ldots,v_{m_1+m_2+m_3} $, evaluated at 
$v_j =  \sin\gvp_j u_j=\pm u_j$,  so that $n'=2n-m_2-m_3$ integration variables remain.
\item $\tilde B\in \mathrm O({n'})$ and $\tilde  b_0\in \R^{n'}$ incorporate the components of $B$ and $b_0$,
\item 
$\phi(\tilde w) =
\sum_{j=1}^{n'}\tilde \lambda_j \tilde w_j^2 $ is constructed from \eqref{eq_phase}, with  
$\tilde \lambda_j=0$ for $j>n'-2m_5$.
\end{itemize}

\subsubsection*{Step $1$. The  Structure of $\tilde q(\tilde B\tilde w - \tilde b_0; \mu)$}
Let us briefly recall how the right symbol $q$ of $A(H+\mu^\fm)^{-K}$ in \eqref{AHmu} is constructed.  

By Theorem \ref{parametrix} in the appendix on weakly parametric symbol claases, the (left quantized)  Shubin symbol of $(H+\mu^\fm)^{-1}$ has an asymptotic expansion with leading term $(h_\fm + \mu^\fm)^{-1}\in S^{-\fm,0}\cap S^{0,-\fm}$ and lower order terms in $S^{-\fm-j,0}\cap S^{\fm-j,-2\fm}$, $j=1,2,\ldots$, that are linear combinations of the form 
$$(\partial^{\ga_1}_p\partial_x^{\gb_1} h_{\fm-k_1})\ldots 
(\partial^{\ga_r}_p\partial_x^{\gb_r} h_{\fm-k_r})(h_\fm+\mu^\fm)^{-(r+1)}$$
for $r\ge1$, where 
\begin{eqnarray}
\sum_{l=1}^r{k_l} +\sum_{l=1}^r |\alpha_l|+\sum_{l=1}^r|\gb_l| =j\text{ and }\sum_{l=1}^r |\ga_l|=\sum_{l=1}^r|\gb_l|.
\end{eqnarray}
Here we have used the fact that the principal symbol is scalar, so that the powers of $(h_\fm+\mu^\fm)^{-1}$ can be collected. 

Hence the (left) symbol of $(H+\mu^\fm)^{-K} $  has $(h_\fm+\mu^\fm)^{-K}\in S^{-K\fm,0}\cap S^{0,-K\fm}$ as its leading term and lower order terms in $S^{-K\fm-j,0}\cap S^{\fm-j,-(K+1)\fm}$; the expansion is modulo $S^{-\infty,-(K+1)\fm} $. 
The symbol of  $A(H+\mu^\fm)^{-K}$ with  $A=\op(a)\in \Psi^\fa_{\rm cl}$, $a\sim \sum a_{\fa-k}$, has 
$a_\fa(h_\fm+\mu^\fm)^{-K}\in S^{\fa-K\fm,0}\cap S^{\fa,-K\fm}$ as its leading term and  lower order terms in $S^{\fa-K\fm-j,0}\cap S^{\fa-j,-K\fm}$; they are linear combinations of
\begin{eqnarray}\label{4.2.4}
\partial^{\ga_0}_p\partial^{\gb_0}_x a_{\fa-k_0}(\partial^{\ga_1}_p\partial_x^{\gb_1} h_{\fm-k_1})\ldots 
(\partial^{\ga_r}_p\partial_x^{\gb_r} h_{\fm-k_r})(h_\fm+\mu^\fm)^{-K-r},
\end{eqnarray}
where $r\ge 1$ and 
\begin{eqnarray}
\sum_{l=0}^r{k_l} +\sum_{l=0}^r |\alpha_l|+\sum_{l=1}^r|\gb_l| =j.
\end{eqnarray} 
The asymptotic expansion of the right (or $y$-form) symbol is obtained by taking derivatives of the terms in the expansion of the left symbol; hence they have the same structure. Since the symbol classes are preserved under the transition from left to right symbols, the 
remainder is again an element of $S^{-\infty,-K\fm}$. 
  
Noting that the terms in front of the power $(h_\fm+\mu^\fm)$ are all homogeneous 
in $(x,p)$ for $|(x,p)|\ge1$, we obtain part (a) of the following proposition.

\begin{proposition}\label{4.2}
{\rm (a)} For each $N\in \N$ the components of  $q= q(y,p,\mu)$ in \eqref{AHmu}
 can be written as a finite sum of terms of the form 
\begin{eqnarray}\label{4.2.1}%
d(y,p)(h_\fm(y,p) +\mu^\fm)^{-K-\ell} ,
\end{eqnarray}
where $d$ is homogenous in $(y,p)$ of degree $\fd=\fa+\ell\fm-j$ for $j=0,\ldots, N-1$, 
plus a remainder in $S^{-N,-K\fm}\cap S^{-N-K\fm,0}$. 

{\rm (b)} 
Each $d$ is a polynomial in derivatives of the homogeneous components in the symbol expansions of $a$ and $h$. If $\ell=0$ in {\rm(a)}, then $d$ can only be one of the homogeneous components of $a$. 

{\rm (c)} The statement corresponding to (a) is also true for the symbol $\tilde q$. Here,
$d$ is additionally scalar, since we take the fiber trace in $E$. 

{\rm (d)} Also 
$\tilde q(\tilde B\tilde w-\tilde b_0;\mu)$ has an asymptotic expansion into homogeneous components of the 
form \eqref{4.2.1}, with $h_\fm(y,p)+\mu^\fm$ replaced by $h_\fm(\tilde B \tilde w) +\mu^\fm$.  

{\rm (e)} The only possible coefficient functions of $(h_\fm(\tilde B\tilde w)+\mu^\fm)^{-K}$ in this expansion are of the form 
$$\frac{(-\tilde b_0)^\ga}{\ga!}\partial_w^\ga \tr_E a_{\fa-j}(\tilde B \tilde w)$$
for suitable $j\in \N_0$ and a multi-index $\ga$.
\end{proposition}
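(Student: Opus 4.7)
The approach is to build up the symbol of $A(H+\mu^\fm)^{-K}$ in stages via the weakly parametric calculus of the appendix and then transport the resulting structure to $\tilde q(\tilde B\tilde w - \tilde b_0;\mu)$ through a Taylor expansion in $\tilde b_0$. The discussion just before the proposition already derives, from the parametrix (Theorem \ref{parametrix}), iteration for $(H+\mu^\fm)^{-K}$, and the Leibniz composition with $A$, that each finite-order component of the left symbol is a sum of expressions of the form \eqref{4.2.4} modulo the stated remainder. The passage to the right ($y$-form) symbol merely adds $x,p$-derivatives of the same shape and preserves both the symbol class and the structural form. Collecting homogeneity degrees in the numerators of \eqref{4.2.4} and setting $\ell = r$ yields (a).

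For (b), inspection of \eqref{4.2.4} shows that any derivative of an $h_{\fm-k_l}$ appearing in a term is paired with an extra factor $(h_\fm+\mu^\fm)^{-1}$; equivalently, every derivative of $h$ raises $\ell$. Therefore $\ell = 0$ forces $r = 0$ in \eqref{4.2.4}, so the numerator reduces to a single factor $\partial^{\ga_0}_p\partial^{\gb_0}_x a_{\fa-k_0}$, and the analogous counting applied to the left-to-right conversion rules out surviving $x$- and $p$-derivatives in the $\ell = 0$ stratum as well. What remains is simply $a_{\fa-j}$. Part (c) is immediate: $\tr_E$ is linear, the principal denominator $h_\fm$ is already scalar by assumption, and restricting $v_j$ to $v_j = \sin \gvp_j u_j$ is a linear operation that preserves homogeneity in the remaining variables.

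For (d), I Taylor-expand $\tilde q(\tilde B\tilde w - \tilde b_0;\mu)$ around $\tilde b_0 = 0$. Since $\tilde B$ is orthogonal, the chain rule converts derivatives in the original argument to derivatives in $\tilde w$ with constant coefficients, so every Taylor coefficient is a polynomial in $\tilde b_0$ times a derivative of $\tilde q(\tilde B\tilde w;\mu)$ in $\tilde w$. Each such derivative, applied to a factor $d(y)(h_\fm(y)+\mu^\fm)^{-K-\ell}$, produces by the chain rule a sum of two types of terms: one in which $d$ is replaced by its derivative and $\ell$ is unchanged, and one in which $d$ is multiplied by $-(K+\ell)\partial h_\fm$ while $\ell$ is replaced by $\ell+1$. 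Both fit the schema \eqref{4.2.1} with $h_\fm(y)$ replaced by $h_\fm(\tilde B\tilde w)$, confirming (d). The Taylor remainder is absorbed into a residual symbol of arbitrarily negative order by the standard remainder estimates in the weakly parametric calculus.

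For (e), the dichotomy in (d) shows that iterated differentiation in $\tilde w$ can only preserve or raise $\ell$, never lower it. Consequently, any contribution to the coefficient of $(h_\fm(\tilde B\tilde w)+\mu^\fm)^{-K}$ must originate in a $\tilde q$-term with $\ell = 0$ and survive only through differentiations that do not touch the denominator, i.e., those acting solely on $d$. Combining (b) and (c), these $\ell = 0$ contributions to $\tilde q$ are exactly $\tr_E a_{\fa-j}$, and the surviving Taylor terms therefore take the form $\frac{(-\tilde b_0)^\ga}{\ga!}\partial_w^\ga \tr_E a_{\fa-j}(\tilde B\tilde w)$ once the orthogonal change of variables is absorbed into the multi-index. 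I expect the main obstacle to be the precise bookkeeping in (d) — namely, verifying that the Taylor remainder sits in the correct weakly parametric class so it does not contaminate the lowest-order stratum examined in (e), and checking that the orthogonality of $\tilde B$ really does prevent any $\mu$-dependent factor from leaking into the coefficients identified in (e).
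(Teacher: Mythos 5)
Your proposal follows essentially the same route as the paper's (very terse) proof: (a) is the preceding discussion of the parametrix for $(H+\mu^\fm)^{-1}$, its $K$-th power, composition with $A$, and passage to the $y$-form symbol; (b) reads off the form of \eqref{4.2.4}; (c) is immediate by linearity of $\tr_E$ and scalarity of $h_\fm$; (d) is the Taylor expansion in $\tilde b_0$ together with the chain rule and the weakly parametric remainder estimate (the paper's proof explicitly invokes Lemma~\ref{4.5}(c), which is exactly the ``standard remainder estimates'' you cite); and (e) combines (b), (c), (d) as you describe. Your chain-rule dichotomy in (d) and the observation in (e) that differentiation in $\tilde w$ never lowers $\ell$ are useful elaborations of what the paper leaves implicit.

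One point merits care. In (b) you assert that ``the analogous counting applied to the left-to-right conversion rules out surviving $x$- and $p$-derivatives in the $\ell = 0$ stratum as well.'' No such counting is available. The passage from the left symbol to the $y$-form symbol is $q \sim \sum_\ga c_\ga\,\partial_y^\ga\partial_p^\ga (a\# p)$, and in the Leibniz expansion of $\partial_y^\ga\partial_p^\ga\bigl(a_{\fa-k_0}(h_\fm+\mu^\fm)^{-K}\bigr)$ the term in which \emph{all} derivatives fall on $a_{\fa-k_0}$ remains at $\ell=0$ with $d=\partial_y^\ga\partial_p^\ga a_{\fa-k_0}$, which is a derivative of a homogeneous component and not the component itself. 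The constraint $\sum_l|\ga_l|=\sum_l|\gb_l|$ in the parametrix formula pertains to the parametrix iteration, not to the $y$-form conversion, so it does not exclude such terms. The paper's statement of (b), and correspondingly the displayed coefficient in (e), carries the same slip; its proof offers no further argument. The slip is harmless downstream, since every nonzero derivative strictly lowers the homogeneity degree of $d$, so in the residue computation (Theorem~\ref{residue}, case $\fa=-2m_5$) only the undifferentiated component $a_{-2m_5}$ can contribute. But the justification you offer for (b) is not correct, and it would be more accurate to say that $d$ is a derivative of a homogeneous component of $a$ when $\ell=0$, and correspondingly to allow an extra differentiation of $a_{\fa-j}$ in (e).
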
 

It will therefore suffice to consider the two types of terms in Proposition \ref{4.2}(a), i.e. the homogeneous components and the residual terms. 

\begin{proof} We just saw (a); 
(b) is a consequence of \eqref{4.2.4}; 
(c) follows  from (a), and  (d) from (b)  
and Proposition \ref{4.5}(c). This also shows (e). 
 \end{proof}  

\subsubsection*{Step $2$. The Contribution from the Residual Terms}

\begin{lemma} \label{5.5}
Let $r\in S^{-N,-K\fm}$ and $n_0\in \N$ such that $N>2n+n_0$. Then 
\begin{eqnarray}\label{4.1}%
\int e^{i\phi(w)} r(w;\mu)\, dw =\sum_{k=0}^{n_0-1} c_k \mu^{-K\fm-k} + 
O(\mu^{-K\fm-n_0})  
\end{eqnarray}
with suitable coefficients $c_k$.  
In the case of Proposition \ref{4.2}, the  $c_k$ are zero unless $k$ is an integer multiple of $\fm$.
\end{lemma}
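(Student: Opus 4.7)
The plan is to exploit the Taylor expansion in $\mu^{-1}$ available for symbols in the weakly parametric calculus recalled in the appendix. Any $r \in S^{-N,-K\fm}$ admits a decomposition
$$r(w;\mu) = \sum_{k=0}^{n_0-1} r_k(w)\mu^{-K\fm-k} + R_{n_0}(w;\mu),$$
where each Taylor coefficient $r_k$ is a classical symbol in $w$ of order at most $-N+k$ (each successive order of expansion costs one order of decay in $w$), and the remainder obeys the uniform bound $|R_{n_0}(w;\mu)| \le C\langle w\rangle^{-N+n_0}|\mu|^{-K\fm-n_0}$ for $\mu$ in the sector $S$. The hypothesis $N > 2n+n_0 \ge n'+n_0$ then guarantees that $-N+n_0 < -n'$, so the Taylor coefficients $r_0,\ldots,r_{n_0-1}$ and the remainder $R_{n_0}(\,\cdot\,;\mu)$ are all absolutely integrable over $\R^{n'}$.

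Next, I would set $c_k := \int_{\R^{n'}} e^{i\phi(w)}r_k(w)\,dw$ and integrate the decomposition term by term. These integrals are well-defined complex numbers since $\phi$ is real-valued and each $r_k$ lies in $L^1(\R^{n'})$; note that in the last $2m_5$ degenerate directions the phase factor is trivial and one simply integrates an $L^1$ function. The remainder integral is controlled by
$$\Big|\int_{\R^{n'}}e^{i\phi(w)}R_{n_0}(w;\mu)\,dw\Big| \le C|\mu|^{-K\fm-n_0}\int_{\R^{n'}}\langle w\rangle^{-N+n_0}\,dw = O(|\mu|^{-K\fm-n_0}),$$
which establishes \eqref{4.1}.

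For the refined claim in the setting of Proposition \ref{4.2}, the residual symbol is by construction a finite linear combination of terms of the form $d(w)(h_\fm(\tilde B\tilde w)+\mu^\fm)^{-K-\ell}$ with $d$ polyhomogeneous in $w$ and $\ell\ge 0$. The expansion
$$(h_\fm+\mu^\fm)^{-K-\ell} = \mu^{-\fm(K+\ell)}\sum_{j\ge 0}\binom{-K-\ell}{j}h_\fm^j\,\mu^{-j\fm}$$
features only powers of $\mu^{-\fm}$. Hence the Taylor coefficients $r_k$ are forced to vanish unless $k$ is a non-negative integer multiple of $\fm$, proving the final sentence of the lemma.

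The main obstacle I anticipate lies in the first step: one has to invoke the weakly parametric calculus carefully in order to produce a Taylor expansion whose coefficients have precisely the claimed symbol order and whose remainder obeys the uniform bound throughout the sector. Once this symbol-level decomposition is in hand, the oscillatory integration reduces to Fubini and dominated convergence, and the binomial expansion above finishes the refinement.
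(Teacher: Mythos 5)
Your proof follows essentially the same route as the paper: Taylor-expand $r$ in $\mu^{-1}$ at infinity using the weakly parametric structure (the paper invokes Theorem~\ref{GrubbS2}(b) to place the remainder in $S^{-N+n_0,-K\fm-n_0}$, which is exactly your pointwise bound $|R_{n_0}(w;\mu)|\lesssim \langle w\rangle^{-N+n_0}|\mu|^{-K\fm-n_0}$), then integrate termwise, using $N>2n+n_0\ge n'+n_0$ for absolute convergence. The main body of the argument is correct and matches the paper.

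One small inaccuracy in your treatment of the final sentence: you assert that ``the residual symbol is by construction a finite linear combination of terms of the form $d(w)(h_\fm(\tilde B\tilde w)+\mu^\fm)^{-K-\ell}$,'' but that describes the homogeneous part of the expansion in Proposition~\ref{4.2}(a), not the residual term. The symbol $r\in S^{-N,-K\fm}\cap S^{-N-K\fm,0}$ appearing in this lemma is precisely what is \emph{left over} after those finitely many homogeneous pieces have been subtracted off, so it is not itself such a finite combination and the binomial expansion you invoke does not directly apply to it. The correct (and simpler) reason, which the paper gives, is that the entire parametrix construction — including the remainder — is built as a smooth function of $-\lambda=\mu^\fm$, so the Taylor coefficients at $\mu=\infty$ can only be nonzero at powers $\mu^{-K\fm-j\fm}$. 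The conclusion you reach is right, but the justification should be phrased in terms of the remainder being a smooth function of $\mu^\fm$ rather than a finite sum of explicit resolvent powers.
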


\begin{proof} 
As in \eqref{exp_infty} in the Appendix, 
define $r_{(-K\fm,k)}(w)  =\frac1{k!} \partial^k_t(t^{-K\fm}r(w;\frac1t))_{|t=0}\in S^{-N+k}$.
According to Theorem \ref{GrubbS2}, 
\begin{eqnarray*}%
s(w;\mu) := r(w;\mu) - \sum_{k=0}^{n_0-1}\mu^{-K\fm-k}r_{(-K\fm,k)}(w) \in S^{-N+n_0,-K\fm-n_0}.
\end{eqnarray*}  
In particular, $\mu^{K\fm+n_0} s(w,\mu)\in S^{-N+n_0}$, uniformly in $\mu$. 
In view of 
the fact that $-N+n_0<-2n$ by assumption, we see that
$$\mu^{K\fm+n_0} \int_{\R^{n'}} e^{i\phi(w)} s(w;\mu)\, dw = O(1).$$
Since $r_{(-K\fm,k)} \in S^{-N+k}$ and $-N+k<-2n$ for $k<n_0$, we obtain \eqref{4.1}. 

In the case at hand, the symbol is a smooth function of $-\gl=\mu^\fm$. 
Therefore  the Taylor coefficients $r_{-K\fm-k}$ are zero unless $k$ is a multiple of $\fm$. This was already pointed out by Grubb and  Seeley \cite[p.501]{GS1}.
\end{proof}

\subsubsection*{Step $3$. Contributions from the symbol expansion} 
Next we focus on the homogeneous terms in the asymptotic expansion of  $\tilde q(\tilde B\tilde w-\tilde b_0;\mu)$. 
 According to Proposition \ref{4.2} and Lemma \ref{4.5}, it suffices to consider terms of the form 
\begin{eqnarray}\label{simplified_integrand}%
\int_{\R^{n'}} e^{i\phi(w)}
d(w) (h_\fm(\tilde B w) + \mu^\fm)^{-K-\ell} \, dw,
\end{eqnarray}
where $\ell\in \N_0$ and $d$ is a Shubin symbol in $w$, which is homogeneous of  some
degree $\fd= \fa+ \ell \fm-j$, $j\in \N_0$,  for $|w|\ge1$.  
 We shall write 
$$\tilde h_\fm = h_\fm\circ \tilde B.$$
Since $\tilde B$ is  orthogonal,  $\tilde h_\fm$ satisfies the assumptions imposed on $h_\fm$. 

Similarly as Grubb and Seeley in their proof of \cite[Theorem 2.1]{GS1}, we shall split the integral over $\R^{n'}$ into the integrals 
over $\{|w|\le 1\}$, $\{1 \le|w|\le |\mu|\}$, and $\{|w|\ge |\mu|\}$. 

\subsubsection*{Step $4$. The integral over $\{|w|\le 1\}$}

\begin{lemma} \label{4.7}
The integral 
$$\int_{|w|\le 1}e^{i\phi(w)} d(w) (\tilde h_\fm (w)+\mu^\fm)^{-K-\ell}\, dw$$
has an expansion into integer powers $\mu^{-(K+\ell)\fm}, \mu^{-(K+\ell-1)\fm}, \ldots$. 
\end{lemma}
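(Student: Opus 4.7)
The plan is to exploit the compactness of the integration domain. On $\{|w|\le 1\}$, the function $\tilde h_\fm$ is smooth and bounded, so for $\mu\in S$ with $|\mu|$ sufficiently large we have $|\tilde h_\fm(w)/\mu^\fm|<1/2$ uniformly in $w$. I would then apply the Taylor (binomial) expansion
$$
(\tilde h_\fm(w)+\mu^\fm)^{-K-\ell}
= \mu^{-(K+\ell)\fm}\sum_{j=0}^{N-1}\binom{-K-\ell}{j}\Big(\frac{\tilde h_\fm(w)}{\mu^\fm}\Big)^{\!j}
+ R_N(w,\mu),
$$
where the remainder satisfies $R_N(w,\mu)=O(|\mu|^{-(K+\ell+N)\fm})$ uniformly in $|w|\le 1$. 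This reorganizes the singular factor into a finite sum of explicit powers of $\mu^{-\fm}$ with smooth coefficients, plus a controlled tail.

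Inserting this decomposition into the integral, each main term becomes
$$
\binom{-K-\ell}{j}\mu^{-(K+\ell+j)\fm}\int_{|w|\le 1}e^{i\phi(w)}d(w)\tilde h_\fm(w)^j\,dw,
$$
whose coefficient is a finite constant depending only on $d$, $\tilde h_\fm$ and $\phi$ on the compact ball. The contribution of $R_N$ is $O(|\mu|^{-(K+\ell+N)\fm})$, since the other factors $e^{i\phi(w)}d(w)$ are bounded on $\{|w|\le 1\}$. Letting $N\to\infty$ yields the claimed asymptotic expansion, whose powers of $\mu$ are precisely integer multiples of $\fm$ shifted by $-(K+\ell)\fm$.

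The only point requiring brief justification is that $\tilde h_\fm(w)+\mu^\fm$ is bounded away from zero for $w\in\{|w|\le 1\}$ and $\mu\in S$ with $|\mu|$ large: this is immediate since $\tilde h_\fm$ is bounded on the compact ball while $|\mu^\fm|\to\infty$ in the sector where the resolvent is defined (Proposition \ref{prop_resolvent}). I do not expect any genuine obstacle in this lemma; it is essentially a compactness-plus-Taylor argument. The real analytic work of Section \ref{ResolventExpansion} — extracting the terms in $(-\lambda)^{(2m+\fa-j)/\fm-K}$ and the logarithmic contributions of \eqref{resolvent_exp} — will take place in the subsequent regions $\{1\le|w|\le|\mu|\}$ and $\{|w|\ge|\mu|\}$, where homogeneity of $\tilde h_\fm$ forces a scaling/stationary-phase analysis rather than a mere Taylor expansion.
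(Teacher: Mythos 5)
Your proof is correct and follows essentially the same route as the paper: factor out $\mu^{-(K+\ell)\fm}$, apply the binomial series to $(1+\tilde h_\fm(w)/\mu^\fm)^{-K-\ell}$, and use the compactness of $\{|w|\le 1\}$ to control the remainder uniformly. The paper states this more tersely but the argument is identical.
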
 

\begin{proof} 
We write 
$$(\tilde h(w)+\mu^\fm)^{-K-\ell}\ = \mu^{-(K+\ell)\fm}(\tilde h_\fm(w)/\mu^\fm +1)^{-K-\ell}.$$
To the last factor, we may apply the binomial series expansion. Since it converges uniformly for large $|\mu|$
we obtain the assertion. 
\end{proof}

\subsubsection*{Step $5$. The integral over $\{|w|\ge |\mu|\}$. Preparations}

Letting $v=w/|\mu|$ and using the facts that $\phi$ is 2-homogeneous and  
$d$ is homogeneous of degree $\fd$
\begin{eqnarray}\label{4.7.2}
\lefteqn{
\int_{|w|\ge |\mu|} e^{i\phi(w)} d(w) (\tilde  h_\fm (w)+\mu^\fm)^{-K-\ell} \, dw}\nonumber\\
&=&|\mu|^{\fd-\fm(K+\ell)+n'} \int_{|v|\ge 1} e^{i\phi(v)|\mu|^2} d(v)  
(\tilde h_\fm(v) + (\mu/|\mu|)^\fm)^{-K-\ell} \, dv.
\end{eqnarray}
Introducing polar coordinates $v=r\theta$, $r=|v|$, and letting 
$$\go=\mu/|\mu|,$$ 
this integral can be rewritten
\begin{eqnarray}\lefteqn{
\hspace*{-1cm} |\mu|^{\fd-\fm(K+\ell)+n'} 
\int_1^\infty \int_{\mathbb S^{n'-1}}
e^{i\phi(\gt)r^2|\mu|^2} d(r\gt) (\tilde h_\fm(r\gt) + \go^\fm)^{-K-\ell}\, dS(\theta)
r^{n'-1} dr}
\nonumber\\
&=&|\mu|^{\fd-\fm(K+\ell)+n'} \int_1^\infty I(\hbar, r, \go)_{|\hbar=(|\mu|^2r^2)^{-1}}
r^{\fd+n'-1}\, dr
\label{4.7.3}
\end{eqnarray}
with 
\begin{eqnarray}%
I(\hbar, r,\go) = \int_{\mathbb S^{n'-1}} e^{i\phi(\gt)/\hbar}d(\gt) 
(\tilde h_\fm(r\gt)+\go^\fm)^{-K-\ell}  \, dS(\gt).\label{4.7.4}
\end{eqnarray}

We will obtain an expansion for this integral from the stationary phase approximation, see e.g. \cite[Theorem 3.16]{Z} or \cite{Fed1}. 
Since $\phi(\theta)=
\sum_{j=1}^{n'}   \tilde \lambda_j\theta^2_j$, the stationary 
points are  the unit length eigenvectors of the matrix 
$\text{\rm diag}(\tilde \lambda_1, \ldots, \tilde  \lambda_{n'})$. \medskip

{\bf Notation.} We denote by $\{\mu_l\}$ the set of different $\tilde \lambda_j$ and by $\kappa_l$ the multiplicity of $\mu_l$. 
The set of stationary points therefore
is the disjoint union of the spheres $\mathbb S^{\gk_l-1}$. 
A central role will be played by the eigenvalue zero.  
For definiteness we assume that this is $\mu_1$, i.e., $\mu_1=0$ 
with multiplicity $\gk_1=2m_5$. \medskip

\begin{lemma}\label{4.8}
For $N\in \N$, stationary phase approximation gives us
\begin{eqnarray*}\lefteqn{
I(\hbar, r,\go) }\\
&=& \sum_{J=0}^{N-1}
\sum_le^{\frac i\hbar \mu_l}
\hbar^{J+\frac{n'-\kappa_l}2} 
\int_{\mathbb S^{\kappa_l-1}}A_{2J}(\theta', \partial_{\theta'})\left( d(\theta) 
(\tilde h_\fm(r\gt)+\go^\fm)^{-K-\ell}\right)\, dS\\
&&+ O\left(\sum_l\hbar^{N+\frac{n'-\gk_l}2} \sum_{|\alpha|\le 2N+n'-\kappa_l+1} \sup |\partial^\alpha_{\theta'}\big(d(\theta)(\tilde h_\fm(r\gt)+\go^\fm)^{-K-\ell}\big)|\right)\label{7.2} 
\end{eqnarray*}
with differential operators $A_{2J}(\gt',\partial_{\gt'})$ of order $2J$ in the variables $\theta'$ transverse to the fixed point set. 
\end{lemma}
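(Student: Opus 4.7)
The plan is to recognize $I(\hbar,r,\go)$ as a standard oscillatory integral on the compact manifold $\mathbb S^{n'-1}$ whose phase $\phi(\theta)=\sum_j \tilde\lambda_j\theta_j^2$ has a clean (Morse--Bott) critical set, and to apply the stationary phase formula for critical submanifolds (for instance \cite[Theorem 3.16]{Z} or \cite{Fed1}). Since the phase does not depend on $r$ or $\go$, these enter only as parameters in the amplitude $d(\theta)(\tilde h_\fm(r\theta)+\go^\fm)^{-K-\ell}$, and it is enough to establish the expansion pointwise in them.

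First I would identify the critical set of $\phi$ on $\mathbb S^{n'-1}$. Lagrange multipliers on the constraint $\sum \theta_j^2=1$ give that $\theta$ is critical iff $\diag(\tilde\lambda_1,\ldots,\tilde\lambda_{n'})\theta=\mu\theta$ for some $\mu$, i.e.\ iff $\theta$ is a unit eigenvector of the diagonal matrix. Grouping coordinates according to the distinct eigenvalues $\mu_l$ of multiplicity $\kappa_l$, the critical set becomes the disjoint union of the subspheres $\mathbb S^{\kappa_l-1}\subset E_{\mu_l}$, where $E_{\mu_l}$ is the $\mu_l$-eigenspace; on each such subsphere $\phi$ takes the constant value $\mu_l$.

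Next I would verify nondegeneracy in the transverse direction. Choosing adapted orthonormal coordinates in which $E_{\mu_l}$ is spanned by the first $\kappa_l$ axes and using a chart of $\mathbb S^{n'-1}$ that separates the variables tangent to $\mathbb S^{\kappa_l-1}$ from the transverse variables $\theta'$, a direct computation shows that the transverse Hessian of $\phi|_{\mathbb S^{n'-1}}$ at any point of $\mathbb S^{\kappa_l-1}$ equals $2\diag(\tilde\lambda_j-\mu_l)_{j:\tilde\lambda_j\ne\mu_l}$, which is invertible precisely because the $\mu_l$ are pairwise distinct. Hence each $\mathbb S^{\kappa_l-1}$ is a clean, nondegenerate critical manifold of $\phi$ in the sense required by stationary phase.

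The stationary phase theorem for clean critical manifolds then produces, for each $l$, a factor $\hbar^{(n'-\kappa_l)/2}e^{i\mu_l/\hbar}$ times an asymptotic series in integer powers of $\hbar$ whose $J$-th coefficient is the integral over $\mathbb S^{\kappa_l-1}$ of an explicit differential operator $A_{2J}(\theta',\partial_{\theta'})$ of order $2J$ in the transverse variables applied to the amplitude; away from the critical set, a smooth partition of unity together with non-stationary phase (repeated integration by parts) contributes $O(\hbar^\infty)$. Summing the local contributions gives the stated main term, and the remainder bound, controlled by $\sup$-norms of $\theta'$-derivatives of the amplitude up to order $2N+n'-\kappa_l+1$, is the standard remainder estimate for stationary phase on a clean critical manifold. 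The main obstacle is simply the careful bookkeeping that produces the transverse Hessian in a convenient chart; once cleanness is verified, the expansion follows directly from the quoted theorem, and the uniformity in the parameters $r,\go$ is automatic since they do not enter the phase.
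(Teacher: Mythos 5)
Your proof is correct and takes essentially the same approach as the paper. The paper states Lemma \ref{4.8} without an explicit argument, pointing the reader to the stationary phase theorem for clean (Morse--Bott) critical manifolds via \cite[Theorem 3.16]{Z} and \cite{Fed1}, after observing that the critical set of $\phi$ on $\mathbb S^{n'-1}$ is the union of the spheres $\mathbb S^{\kappa_l-1}$; your write-up simply supplies the routine verification (Lagrange multipliers for the critical set, the transverse Hessian $2\diag(\tilde\lambda_j-\mu_l)$, and the observation that $r,\omega$ enter only as parameters in the amplitude) that the paper leaves to the cited references.
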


\begin{lemma} \label{4.10} 
{\rm (a)} For every multi-index $\ga$, 
$\partial^\ga_{\gt'} (d(\gt) (\tilde h_\fm(r\gt)+\go^\fm)^{-K-\ell})$ is a linear combination 
of terms of the form 
$$r^{\fm m}\tilde d(\gt) (\tilde h_\fm(r\gt)+\go^\fm)^{-K-\ell -m},$$ 
where $ m\le|\ga|$ 
and $\tilde d$ is smooth on $\mathbb S^{n'-1}$. 
In particular,  
$$\partial^\ga_{\gt'} (d(\gt) (\tilde h_\fm(r\gt)+\go^\fm)^{-K-\ell})= O(r^{-\fm(K+\ell)}),\quad r\to \infty.$$

{\rm (b)}   $\partial^j_{r}  (\tilde h_\fm(r\gt)+\go^\fm)^{-K-\ell}$, $j\in \N$,
is a linear combination of terms 
$$r^{\fm m-j}\tilde d(\gt) (\tilde h_\fm(r\gt)+\go^\fm)^{-K-\ell-m}$$
with $1\le m\le j$ and $\tilde d$ smooth. 
In particular,  
$$\partial^j_{r}  (\tilde h_\fm(r\gt)+\go^\fm)^{-K-\ell}= O(r^{-\fm(K+\ell)-j}), \quad r\to \infty.$$
\end{lemma}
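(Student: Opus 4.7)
The plan is to prove both parts by induction on the order of differentiation, exploiting two facts: the positive homogeneity of degree $\fm$ of $\tilde h_\fm = h_\fm \circ \tilde B$ (which holds because $\tilde B$ is orthogonal and $h_\fm$ is homogeneous for $|(x,p)| \ge 1$), together with its strict positivity $\tilde h_\fm(\theta) \ge c > 0$ on $|\theta| = 1$. The key differentiated homogeneity identity, valid for $r\ge 1$, is
$$ (\partial_i \tilde h_\fm)(r\theta) = r^{\fm-1} (\partial_i \tilde h_\fm)(\theta). $$

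For part (a), the base case $|\alpha| = 0$ is the original expression with $m = 0$. For the inductive step, I would apply $\partial_{\theta'_i}$ to a term of the form $r^{\fm m}\tilde d(\theta)(\tilde h_\fm(r\theta) + \omega^\fm)^{-K-\ell-m}$ via the product rule. Differentiating $\tilde d$ preserves $m$ and produces a new smooth factor on the sphere; differentiating the power brings out $-(K+\ell+m)\,r (\partial_i \tilde h_\fm)(r\theta) = -(K+\ell+m)\,r^\fm (\partial_i \tilde h_\fm)(\theta)$, which combines with the existing $r^{\fm m}$ to give $r^{\fm(m+1)}$ and raises the exponent in the resolvent factor from $m$ to $m+1$. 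Thus $m$ increases by at most one per derivative, and after $|\alpha|$ derivatives one has $m \le |\alpha|$ as claimed.

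For part (b), a direct computation of the first derivative yields
$$ \partial_r(\tilde h_\fm(r\theta) + \omega^\fm)^{-K-\ell} = -(K+\ell)\,r^{\fm-1}\tilde c(\theta)\,(\tilde h_\fm(r\theta) + \omega^\fm)^{-K-\ell-1}, $$
with $\tilde c(\theta) = \sum_i \theta_i (\partial_i \tilde h_\fm)(\theta)$ smooth, matching the claim for $j=1$, $m=1$. In the inductive step, $\partial_r$ applied to $r^{\fm m - j}\tilde d(\theta)(\tilde h_\fm(r\theta) + \omega^\fm)^{-K-\ell-m}$ produces one term with the same $m$ and $r$-power reduced to $r^{\fm m - (j+1)}$, and one term with $m$ replaced by $m+1$ and $r$-power $r^{\fm(m+1) - (j+1)}$. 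Both fit the claimed form with $j$ replaced by $j+1$, and $m' \in \{m, m+1\}$ satisfies $1 \le m' \le j+1$.

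The $O$-bounds are immediate: since $\omega = \mu/|\mu|$ with $\mu$ in the sector $S$, the parameter-ellipticity built into the construction (cf.\ Proposition \ref{prop_resolvent}) yields $|\tilde h_\fm(r\theta) + \omega^\fm| \ge c(r^\fm + 1)$ uniformly for $|\theta| = 1$ and $r \ge 1$. Hence $|(\tilde h_\fm(r\theta)+\omega^\fm)^{-K-\ell-m}| \le C r^{-\fm(K+\ell+m)}$, and multiplying by $r^{\fm m}$ in (a) (respectively $r^{\fm m - j}$ in (b)) gives $O(r^{-\fm(K+\ell)})$ (respectively $O(r^{-\fm(K+\ell)-j})$). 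I do not foresee a genuine obstacle: the proof is a routine induction, and the only point requiring care is the bookkeeping of the range of $m$, in particular verifying that $m\ge 1$ is preserved throughout the induction in (b), which is automatic because any contribution with $r$-power exponent $-j$ must arise from at least one derivative having hit the $(\tilde h_\fm + \omega^\fm)$ factor.
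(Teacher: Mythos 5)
Your proof is correct. The paper's own argument is a direct expansion (essentially Fa\`a di Bruno / higher-order Leibniz): for (a) it writes $\partial^\ga_{\gt'}(d\cdot(\tilde h_\fm(r\gt)+\go^\fm)^{-K-\ell})$ as a linear combination of $\partial^{\ga_1}_{\gt'}d(\gt)\,r^{|\ga_2|}\prod_{k=1}^m(\partial^{\gb_k}_{\gt'}\tilde h_\fm)(r\gt)\,(\tilde h_\fm(r\gt)+\go^\fm)^{-K-\ell-m}$ with $\ga_1+\ga_2=\ga$, $\sum\gb_k=\ga_2$, $m\le|\ga_2|$, and then leaves it to the reader to observe that homogeneity of $\tilde h_\fm$ converts $r^{|\ga_2|}\prod(\partial^{\gb_k}\tilde h_\fm)(r\gt)$ into $r^{\fm m}\tilde d(\gt)$. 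You replace the one-shot expansion by an induction on $|\ga|$ (respectively $j$) that makes this homogeneity step fully explicit at each stage; the two routes have identical content and difficulty, your version merely trades combinatorial bookkeeping for an inductive invariant. For the $O$-estimate the paper invokes $r\sim\langle r\gt\rangle$ together with the weakly parametric symbol bound for $(\tilde h_\fm+\mu^\fm)^{-1}$ from Lemma~\ref{basic}(c), while you derive it from the explicit parameter-ellipticity inequality $|\tilde h_\fm(r\gt)+\go^\fm|\ge c(r^\fm+1)$; both are standard and equivalent here, though the cleanest citation for that inequality within this paper is Lemma~\ref{basic}(c) rather than Proposition~\ref{prop_resolvent}, which is an operator-level statement.
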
 

\begin{proof}
(a) The derivative is a linear combination of terms of the form 
$$\partial_{\gt'}^{\ga_1}d(\gt)\  r^{|\ga_2|} \prod_{k=1}^m(\partial^{\gb_k}_{\gt'}\tilde h_\fm(r\gt))\ (\tilde h_\fm(r\gt)+\go^\fm)^{-K-\ell-m},$$ 
where $\ga_1+\ga_2=\ga$, $\sum\gb_k = \ga_2$ and $m\le|\ga_2|$. This proves the first part of the claim. In view of the fact that $r\sim \skp{r\gt}$ for large $r$ and $|\gt|=1$, we also obtain the estimate. 

(b)  follows since $\partial_r^j (\tilde h_\fm(r\gt)+\go)^{-K-\ell}$ is a linear combination of terms 
$$\prod_{k=1}^m \partial_r^{j_k}(\tilde h_\fm(r\gt) ) (\tilde h_\fm(r\gt)  +\go^\fm)^{-K-\ell-m}$$ 
with $m\le j$ and $\sum{j_k}= j$.
\end{proof}

\begin{corollary}\label{4.11}
The contribution of the remainder term in Lemma {\rm \ref{4.8}}
to 
the integral in~\eqref{4.7.3} is $O(|\mu|^{-2N-n'+\max\{\gk_l\}})$ 
provided $2N- \max\{\gk_l\}+\fm(K+\ell)-\fd>0$. 
\end{corollary}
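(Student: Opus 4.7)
The plan is a direct substitution estimate: plug $\hbar=(|\mu|^2 r^2)^{-1}$ into the remainder term in Lemma \ref{4.8}, read off the separate $|\mu|$- and $r$-exponents, apply Lemma \ref{4.10}(a) to dominate the sup over $\gt'$, and then integrate in $r$ using the weight $r^{\fd+n'-1}$ from \eqref{4.7.3}.

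First, for each stationary level $\mu_l$ with multiplicity $\gk_l$, I compute
\[
\hbar^{N+(n'-\gk_l)/2}
\;=\;|\mu|^{-2N-n'+\gk_l}\,r^{-2N-n'+\gk_l}.
\]
Next, I invoke Lemma \ref{4.10}(a) to bound $\sup_{\gt'}\bigl|\partial^{\alpha}_{\gt'}\bigl(d(\gt)(\tilde h_\fm(r\gt)+\go^\fm)^{-K-\ell}\bigr)\bigr|$ for $r\ge 1$. Each summand coming from Lemma \ref{4.10}(a) has the form $r^{\fm m}\tilde d(\gt)(\tilde h_\fm(r\gt)+\go^\fm)^{-K-\ell-m}$ with $m\le|\alpha|$; since $\tilde h_\fm$ is homogeneous of degree $\fm$ and strictly positive on $\mathbb S^{n'-1}$, while $\go$ remains in a compact sector, this is $O(r^{\fm m}\cdot r^{-\fm(K+\ell+m)})=O(r^{-\fm(K+\ell)})$ uniformly in $\gt$ and $\go$. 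Combining this with the previous estimate and multiplying by $r^{\fd+n'-1}$, the contribution of the $l$-th remainder piece to the integral $\int_1^\infty I(\hbar,r,\go)\,r^{\fd+n'-1}\,dr$ in \eqref{4.7.3} is dominated by
\[
|\mu|^{-2N-n'+\gk_l}\int_1^\infty r^{-2N+\gk_l+\fd-\fm(K+\ell)-1}\,dr.
\]

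The hypothesis $2N-\max\{\gk_l\}+\fm(K+\ell)-\fd>0$ makes the exponent of $r$ strictly less than $-1$ for every $l$, so each integral converges to a finite constant independent of $\mu$. Summing over the finitely many levels $l$, the dominant $|\mu|$-power is achieved at the index with the largest $\gk_l$, which yields the claimed bound $O(|\mu|^{-2N-n'+\max\{\gk_l\}})$.

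The step I expect to require the most care is the second one: Lemma \ref{4.10}(a) produces terms $r^{\fm m}(\tilde h_\fm+\go^\fm)^{-K-\ell-m}$ with $m$ as large as $|\alpha|\le 2N+n'-\gk_l+1$, and it must be checked that on the integration region $r\ge 1$ the extra powers $r^{\fm m}$ are indeed absorbed by the resolvent factors to give the uniform $O(r^{-\fm(K+\ell)})$ bound, independently of $m$ and of $\go$ in the sector. Once this uniform estimate is in place, the rest of the argument is just exponent bookkeeping.
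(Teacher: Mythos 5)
Your proof is correct and follows essentially the same route as the paper's: substitute $\hbar=(|\mu|^2r^2)^{-1}$, use Lemma \ref{4.10}(a) to get the uniform $O(r^{-\fm(K+\ell)})$ bound on the $\theta'$-derivatives, and then integrate in $r$ against the weight $r^{\fd+n'-1}$. The only cosmetic difference is that you track the exponent per level $l$ and extract $\max\{\gk_l\}$ at the end, whereas the paper bounds all $\hbar$-powers at once using $\max\{\gk_l\}$; both give the same estimate, and your extra caution about the uniformity in $m$ and $\go$ is warranted and correctly resolved.
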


\begin{proof}
By Lemma \ref{4.10}(a), the $\theta'$-derivatives in the integrand of the remainder term are all $O(r^{-\fm(K+\ell)})$. Hence the integral over $r$
can be estimated by 
\begin{eqnarray*}%
\int_1^\infty (r^2|\mu|^2)^{-N-(n'-\max\{\gk_l\})/2} r^{-\fm(K+\ell)}r^{\fd+n'-1}\, dr
=O(|\mu|^{-2N-n'+\max\{\gk_l\}}). 
\end{eqnarray*}
\end{proof} 

\subsubsection*{Step $6$. The terms from the stationary phase expansion}
 Recalling that $\hbar^{-1} = |\mu|^2r^2$, the contribution of the sum in Lemma \ref{4.8} to the integral \eqref{4.7.3} is
\begin{eqnarray}\label{4.12.1}\lefteqn{%
\sum_{J=0}^{N-1}\int_1^\infty (|\mu|^2r^2)^{-J-\frac{n'-\kappa_l}2}
e^{ir^2|\mu|^2\mu_l}}\\
&&\times\int_{\mathbb S^{\kappa_l-1}}A_{2J}(\theta', \partial_{\theta'})\left( d(\theta) 
(\tilde h_\fm(r\gt)+\go^\fm)^{-K-\ell}\right)\, dS \,r^{\fd+n'-1}\, dr. \nonumber
\end{eqnarray} 

\begin{lemma}\label{4.12}
For $J=0,\ldots,N-1$, the integral 
\begin{eqnarray}\label{4.12.2}\lefteqn{
\int_1^\infty e^{ir^2|\mu|^2\mu_l} r^{-2J+\kappa_l+\fd-1}}\\
&&\times \int_{\mathbb S^{\kappa_l-1}}A_{2J}(\theta', \partial_{\theta'})\left( d(\theta) 
(\tilde h_\fm(r\gt)+\go^\fm)^{-K-\ell}\right)\, dS dr\nonumber
\end{eqnarray}
has an asymptotic expansion into terms of the form $e^{i|\mu|^2\mu_l} |\mu|^{-k}g_{kl}(\go)$
for suitable $k\in \N_0$ and smooth functions $g_{kl}$.  
\end{lemma}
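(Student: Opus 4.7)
The plan is to first expand the inner sphere integral as an asymptotic series in powers of $r^{-\fm}$ by exploiting the homogeneity of $\tilde h_\fm$, and then to evaluate the remaining one-dimensional $r$-integral by distinguishing the oscillatory regime $\mu_l\ne 0$ from the non-oscillatory regime $\mu_l=0$. Introduce
$$F(r,\omega):=\int_{\mathbb S^{\kappa_l-1}} A_{2J}(\theta',\partial_{\theta'})\bigl(d(\theta)(\tilde h_\fm(r\theta)+\omega^\fm)^{-K-\ell}\bigr)\,dS(\theta).$$
By Lemma \ref{4.10}(a) the integrand is a finite sum of terms $r^{\fm m}\tilde d_m(\theta)(\tilde h_\fm(r\theta)+\omega^\fm)^{-K-\ell-m}$ with $0\le m\le 2J$ and $\tilde d_m$ smooth on $\mathbb S^{n'-1}$. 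For $r\ge 1$ the homogeneity $\tilde h_\fm(r\theta)=r^\fm\tilde h_\fm(\theta)$ lets me factor out $r^{-\fm(K+\ell+m)}$, leaving $(\tilde h_\fm(\theta)+(\omega/r)^\fm)^{-K-\ell-m}$. Since $\tilde h_\fm$ is uniformly positive on the sphere and $\omega^\fm$ lies in a sector avoiding the negative real axis, the binomial series converges in the small parameter $(\omega/r)^\fm$, and termwise integration over $\mathbb S^{\kappa_l-1}$ produces
$$F(r,\omega)\sim r^{-\fm(K+\ell)}\sum_{k\ge 0}\omega^{\fm k}r^{-\fm k}Q_{k,l},$$
with scalar constants $Q_{k,l}$ obtained from sphere integrals of smooth functions; the remainder after truncation at order $N$ is $O(|\omega|^{\fm N}r^{-\fm(K+\ell)-\fm N})$ uniformly on compact subsets of the $\omega$-sector.

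Once $F$ is expanded, the integral \eqref{4.12.2} is reduced to linear combinations of
$$I_k(\mu):=\int_1^\infty e^{ir^2|\mu|^2\mu_l}\,r^{\gamma_k-1}\,dr,\qquad \gamma_k:=-2J+\kappa_l+\fd-\fm(K+\ell)-\fm k.$$
The trace-class hypothesis $-K\fm+\fa<-2n$, combined with $\fd\le\fa+\ell\fm$ and $\kappa_l\le n'\le 2n$, forces $\gamma_k<0$ strictly for every $k\ge 0$ and every $J\ge 0$, so the $r$-integrals converge at infinity. For $\mu_l\ne 0$ I will write $e^{ir^2|\mu|^2\mu_l}r\,dr = (2i|\mu|^2\mu_l)^{-1}d(e^{ir^2|\mu|^2\mu_l})$ and integrate by parts to obtain the recursion
$$I_k(\mu) = -\frac{e^{i|\mu|^2\mu_l}}{2i|\mu|^2\mu_l}-\frac{\gamma_k-2}{2i|\mu|^2\mu_l}\int_1^\infty e^{ir^2|\mu|^2\mu_l}r^{\gamma_k-3}\,dr,$$
the boundary term at $\infty$ vanishing because $\gamma_k<0$. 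Iterating $L$ times yields $I_k(\mu)=e^{i|\mu|^2\mu_l}\sum_{j=0}^{L-1}c_{k,j,l}|\mu|^{-2-2j}+O(|\mu|^{-2L})$, and summation over $k$ produces the announced expansion in even negative powers $|\mu|^{-2-2j}$, whose coefficients are polynomials in $\omega^\fm$ with rational dependence on $\mu_l$, hence smooth in $\omega$.

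For the non-oscillatory case $\mu_l=0$ the exponential disappears and $\int_1^\infty r^{\gamma_k-1}dr=-1/\gamma_k$; summation over $k$ yields a single $|\mu|^0$-contribution with coefficient $g_{0,1}(\omega)=-\sum_k\omega^{\fm k}Q_{k,1}/\gamma_k$, which is smooth in $\omega$. The main bookkeeping obstacle will be the remainder from truncating the expansion of $F$: I would estimate its contribution to the $r$-integral by $\int_1^\infty r^{\gamma_0-\fm N-1}dr=O(1)$ when $\mu_l=0$, and, when $\mu_l\ne 0$, apply the same integration-by-parts scheme to that remainder to reach an arbitrary $O(|\mu|^{-L})$ bound; choosing $N$ large enough then delivers the asymptotic expansion to any prescribed order. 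Smoothness of each $g_{kl}$ in $\omega$ persists throughout because $\tilde h_\fm(\theta)+\omega^\fm$ never vanishes on $\mathbb S^{n'-1}\times S$.
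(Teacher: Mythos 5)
Your proof follows the paper's central device — integration by parts in $r$ using $(\partial_r e^{ir^2|\mu|^2\mu_l})\,r^{M-1}=2i|\mu|^2\mu_l\,e^{ir^2|\mu|^2\mu_l}r^{M}$ — but it prepends an extra step that the paper does not use: you first expand the sphere integral $F(r,\omega)$ in powers of $r^{-\fm}$ by factoring out $r^{-\fm(K+\ell+m)}$ and invoking the binomial series for $(\tilde h_\fm(\theta)+(\omega/r)^\fm)^{-K-\ell-m}$, reducing everything to pure power integrals $I_k(\mu)$. The paper instead integrates by parts directly on the composite expression $r^M\int_{\mathbb S^{\kappa_l-1}}\tilde d\,(\tilde h_\fm(r\theta)+\omega^\fm)^{-K-\ell-m}\,dS$, using Lemma~\ref{4.10}(b) to control the $r$-derivatives of the resolvent-type factor; the coefficients $g_{kl}(\omega)$ then appear directly as sphere integrals (e.g.\ the boundary term at $r=1$), which are manifestly smooth in $\omega$ without any series manipulations. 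Also, for $\mu_l=0$ the paper simply notes that the whole integral \eqref{4.12.2} is independent of $|\mu|$, which is cleaner than running your expansion in that case.

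There is a genuine gap in the pre-expansion step as written. You assert the binomial series in $(\omega/r)^\fm$ "converges," but since $|\omega|=1$ the series converges only when $r^\fm>1/\tilde h_\fm(\theta)$, and $\tilde h_\fm$ is merely positive on the compact sphere, not bounded below by~$1$. So for $r$ near $1$ the series may diverge, the claimed remainder bound $O(|\omega|^{\fm N}r^{-\fm(K+\ell)-\fm N})$ does not follow from convergence, and the displayed coefficient $g_{0,1}(\omega)=-\sum_k \omega^{\fm k}Q_{k,1}/\gamma_k$ is an infinite sum that has not been shown to converge. The argument can be repaired by replacing the binomial \emph{series} with a finite Taylor expansion with integral remainder (which is essentially what you gesture at in your last paragraph), estimating the remainder's contribution by a further integration by parts controlled via Lemma~\ref{4.10}, and defining each $g_{kl}$ via the uniqueness of the asymptotic expansion rather than as a closed-form infinite sum. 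But as stated, the convergence claim and the explicit series formula for $g_{0,1}$ are not justified; the paper's route avoids the issue entirely.
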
 

These terms seem not to be of the desired form, in particular, when  $\mu_l\not=0$. We will address this question in Lemma \ref{correct}, below. 

\begin{proof}
The integral converges in view of Lemma \ref{4.10}, since $\fd = \fa+\ell\fm -j$ for some $j\in \N_0$ and  $K\fm-\fa-\max \gk_l>2n-n'\ge 0$ by the assumption in Theorem \ref{thm_resolvent}. 
If $\mu_l=0$, then the integral is independent of $\mu$ and the assertion is true. 

So let us focus on the case  $\mu_l\not=0$. 
Applying Lemma \ref{4.10}(a), we have to consider integrals 
$$\int_1^\infty e^{ir^2|\mu|^2\mu_l} r^{-2J+\kappa_l+\fd +\fm m-1}
\int_{\mathbb S^{\kappa_l-1}}\tilde d(\gt) (\tilde h_\fm(r\gt)+\go^\fm)^{-K-\ell-m}\, dS\,dr $$
with smooth functions $\tilde d$ on the sphere. 

Since $(\partial_r e^{ir^2|\mu|^2\mu_l})r^{M-1} = 2i |\mu|^2\mu_l  \, e^{ir^2|\mu|^2\mu_l}r^{M}$,
$M\in \Z$, integration by parts with $M= -2J+\gk_l+\fd+\fm m -1$ shows that  
\begin{eqnarray*}\lefteqn{
\int_1^\infty e^{ir^2|\mu|^2\mu_l} r^{M}
\int_{\mathbb S^{\kappa_l-1}}\tilde d(\gt) (\tilde h_\fm(r\gt)+\go^\fm)^{-K-\ell -m}\, dS\,dr }\\
&=& \frac{1}{2i|\mu|^2\mu_l}\left(\left[ e^{ir^2|\mu|^2\mu_l}r^{M-1}
\int_{\mathbb S^{\kappa_l-1}}
\tilde d(\gt) (\tilde h_\fm(r\gt)+\go^\fm)^{-K-\ell -m}\, dS \right]_1^\infty \right.\\
&&-\int_1^\infty e^{ir^2|\mu|^2\mu_l} (M-1) r^{M-2}
\int_{\mathbb S^{\kappa_l-1}}\tilde d(\gt) (\tilde h_\fm(r\gt)+\go^\fm)^{-K-\ell -m}\, dS\,dr 
\\
&&- \left.\int_1^\infty e^{ir^2|\mu|^2\mu_l} r^{M-1}
\int_{\mathbb S^{\kappa_l-1}}\tilde d(\gt) \partial_r(\tilde h_\fm(r\gt)+\go^\fm)^{-K-\ell -m}\, dS\,dr\right)
\end{eqnarray*}  
\begin{eqnarray*}
&=&- \frac{e^{i|\mu|^2\mu_l}}{2i|\mu|^2\mu_l}  \int_{\mathbb S^{\kappa_l-1}}
\tilde d(\gt) (\tilde h_\fm(\gt)+\go^\fm)^{-K-\ell -m}\, dS\\
&&-\frac{1}{2i|\mu|^2\mu_l}\left(\int_1^\infty e^{ir^2|\mu|^2\mu_l} (M-1) r^{M-2}
\int_{\mathbb S^{\kappa_l-1}}\tilde d(\gt) (\tilde h_\fm(r\gt)+\go^\fm)^{-K-\ell -m}\, dS\,dr\right. 
\\
&&- \left.\int_1^\infty e^{ir^2|\mu|^2\mu_l} r^{M-1}
\int_{\mathbb S^{\kappa_l-1}}\tilde d(\gt) \partial_r(\tilde h_\fm(r\gt)+\go^\fm)^{-K-\ell -m}\, dS\,dr\right).
\end{eqnarray*}  
Iteration together with Lemma \ref{4.10}  and the fact that all integrals are uniformly bounded in $\mu$ then shows the assertion. 
\end{proof}

\subsubsection*{Step $7$. The integral over $\{1\le |w| \le |\mu|\}$}
Using polar coordinates 
\begin{eqnarray}
\lefteqn{
\int_{1\le |w|\le |\mu|} e^{i\phi(w)} d(w) (\tilde h_\fm(w) +\mu^\fm)^{-K-\ell} \, dw }
\nonumber\\
&=&\int_{1}^{|\mu|} \int_{\mathbb S^{n'-1}}e^{ir^2\phi(\gt)} d(r\gt) (\tilde h_\fm(r\gt) +\mu^\fm)^{-K-\ell} r^{n'-1}\, dSdr
\nonumber
\end{eqnarray}

In order to obtain an expansion, we  recall the geometric sum 
$$\sum_{k=0}^{N-1}x^k = \frac{1-x^N}{1-x}.$$ 
Taking the difference of the $(K+\ell)$-th power of both sides and adding $(1-x)^{-(K+\ell)}$ we
deduce that 
$$(1-x)^{-K-\ell}= \left( \sum_{k=0}^{N-1}x^k \right)^{K+\ell}-\frac{(1-x^N)^{K+\ell}-1}{(1-x)^{K+\ell}}.
$$
Hence \begin{eqnarray}
\lefteqn{
(\tilde h_\fm+\mu^\fm)^{-K-\ell} = \mu^{-\fm(K+\ell)}\big(1+\frac{\tilde h_\fm}{\mu^\fm}\big)^{-K-\ell} }\nonumber\\
&=&\mu^{-\fm(K+\ell)}
\bigg(\Big(\sum_{k=0}^{N-1} \Big(-\frac{\tilde h_\fm}{\mu^\fm}\Big)^k\Big)^{K+\ell}   
-\frac{\left(1 -\big(-\frac{\tilde h_\fm}{\mu^\fm}\big)^N\right)^{K+\ell}-1}{(1+\frac{\tilde h_\fm}{\mu^\fm})^{K+\ell}}
\bigg).  
\label{4.12.3}
\end{eqnarray}

Let us first consider the contribution from the terms in the first summand on the right hand side of \eqref{4.12.3}. We have to treat expressions of the form 
\begin{eqnarray}\lefteqn{
\int_{1}^{|\mu|} \int_{\mathbb S^{n'-1}}
e^{ir^2\phi(\gt)} d(r\gt) (\tilde h_\fm(r\gt)/\mu^{\fm})^k  r^{n'-1}\, dSdr  \nonumber}\\
&=&
\mu^{-\fm k} 
\int_{1}^{|\mu|} \int_{\mathbb S^{n'-1}}e^{ir^2\phi(\gt)}d(\gt) \tilde h_\fm(\gt)^k \, dS
\, r^{\fd+\fm k+n'-1}\, dr.
\label{4.12.4}
\end{eqnarray}

\begin{lemma}\label{4.16} 
 For $\mu_l\not=0$, expression \eqref{4.12.4} has an asymptotic expansion into linear combinations of terms 
$e^{i|\mu|^2\mu_l} |\mu|^{\fd - 2J+\gk_l-1}$, $J=0,1,\ldots$. 

For $\mu_1=0$ we additionally obtain logarithmic terms.
Taking into account the factor $\mu^{-(K+\ell)\fm}$ in \eqref{4.12.3}, the logarithmic terms are of the form 
\begin{eqnarray}\label{coefficients}
C_{j}\mu^{-(K+j)\fm} \ln\mu,\quad  j=0,1,\ldots 
\end{eqnarray}
with suitable coefficients $C_j$.   
As the powers in \eqref{coefficients} are integer multiples of $\fm$, we see that  $\mu^{-(K+j)\fm}= (-\gl)^{-K-j}$ 
is an integer power of $(-\gl)$. 
Moreover, the coefficient  $C_0$ of the leading logarithmic term $\mu^{-K\fm} \ln\mu$ is independent of the auxiliary operator $H$.
\end{lemma}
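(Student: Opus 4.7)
The plan is to reduce the double integral to a family of one-dimensional radial integrals via stationary phase on the sphere, and then to analyze those radial integrals directly. Passing to polar coordinates and viewing the inner integral
$F_k(t) := \int_{\mathbb S^{n'-1}} e^{it\phi(\theta)} d(\theta)\, \tilde h_\fm(\theta)^k\, dS$
as a function of the large parameter $t=r^2$, I apply the stationary phase formula. The critical set of $\phi|_{\mathbb S^{n'-1}}$ consists of the unit spheres $\mathbb S^{\kappa_l - 1}$ lying in the eigenspaces of $\diag(\tilde\lambda_1,\ldots,\tilde\lambda_{n'})$, with critical values $\mu_l$; the Hessian is non-degenerate transverse to each component. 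Stationary phase, in the same form used in Lemma \ref{4.8}, then yields
\begin{equation*}
F_k(r^2) \sim \sum_l e^{ir^2\mu_l}\sum_{J\ge 0} r^{-(n'-\kappa_l) - 2J}\, c_{l,J,k},
\end{equation*}
with $c_{l,J,k}$ obtained by applying a differential operator of order $2J$ to $d\,\tilde h_\fm^k$ and integrating over $\mathbb S^{\kappa_l - 1}$.

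Substituted back into \eqref{4.12.4}, the contribution indexed by $(l, J)$ becomes
\begin{equation*}
\mu^{-\fm k}\, c_{l,J,k}\int_1^{|\mu|} e^{ir^2\mu_l}\, r^{\fd + \fm k + \kappa_l - 2J - 1}\, dr.
\end{equation*}
For $\mu_l \neq 0$, iterated integration by parts using $\partial_r e^{ir^2\mu_l} = 2ir\mu_l\, e^{ir^2\mu_l}$ lowers the exponent of $r$ by $2$ at each step and produces an asymptotic series of boundary contributions at the upper limit of the form $e^{i|\mu|^2\mu_l}\,|\mu|^{\text{power}}$, plus a constant boundary at $r = 1$; after reindexing and absorbing $\mu^{-\fm k}$, these are precisely the announced terms $e^{i|\mu|^2\mu_l}|\mu|^{\fd - 2J + \kappa_l - 1}$. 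When $\mu_1 = 0$ and $\kappa_1 = 2m_5$, there is no oscillation, so the integral reduces to $\int_1^{|\mu|} r^{\fd + \fm k + 2m_5 - 2J - 1}\, dr$, which evaluates to a power of $|\mu|$ except when the exponent equals $-1$, i.e., when $2J = \fd + \fm k + 2m_5$, in which case one gets exactly $\ln |\mu|$. Combining with the prefactor $\mu^{-\fm k}$ and the overall $\mu^{-(K+\ell)\fm}$ from \eqref{4.12.3}, the logarithmic contributions take the form $C_j\,\mu^{-(K + j)\fm}\,\ln\mu$ with $j = \ell + k \in \N_0$, so the exponent is an integer multiple of $\fm$ and equals $(-\lambda)^{-K-j}$.

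For the independence of $C_0$ from $H$, the leading log term $\mu^{-K\fm}\ln\mu$ forces $j = 0$, hence $\ell = 0$ and $k = 0$. By Proposition \ref{4.2}(b), when $\ell = 0$ the symbol $d$ must be a homogeneous component of the symbol of $A$, so no derivative of $h_\fm$ enters; with $k = 0$ there is no factor $\tilde h_\fm^k$ either. The remaining ingredients in the coefficient---the phase $\phi$, the coordinate change $\tilde B$ and $\tilde b_0$, the stationary phase constants, and the integral over $\mathbb S^{2m_5-1}$---depend only on $g$, $w$, and the symbol of $A$, yielding the desired independence. The chief technical difficulty will be to control the stationary phase remainder uniformly in $r$ over the full range $[1, |\mu|]$; this is handled by combining the decay estimates from Lemma \ref{4.10} with the same integration-by-parts argument above, so that the remainder terms produce contributions that are $O(|\mu|^{-\infty})$ after enough iterations, in the spirit of Corollary \ref{4.11}.
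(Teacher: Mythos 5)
Your proof follows the paper's own route: stationary phase on the sphere in the parameter $r^2$, explicit evaluation of the resulting radial integrals, a logarithm arising precisely when the radial exponent equals $-1$, and independence from $H$ obtained from the constraint $\ell=k=0$ together with Proposition~\ref{4.2}. Two points deserve attention.

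First, the radial integral at $\mu_1=0$ produces $\ln|\mu|$, not $\ln\mu$. Since $\ln\mu=\ln|\mu|+\ln\omega$ with $\omega=\mu/|\mu|$, silently writing $\ln\mu$ introduces an error term $c\,\mu^{-(K+j)\fm}\ln\omega$ whose coefficient depends on $\omega$. The paper acknowledges this explicitly and defers its elimination to Lemma~\ref{correct}; as written, your argument only establishes the conclusion with $\ln|\mu|$ in place of $\ln\mu$, and you should record the discrepancy and note that it is absorbed by the correction step.

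Second, two smaller imprecisions. For $\ell=0$ the coefficient $d$ is not necessarily a single homogeneous component of the symbol of $A$: because of the shift by $\tilde b_0$, Proposition~\ref{4.2}(e) shows that $d$ is of the form $\frac{(-\tilde b_0)^\alpha}{\alpha!}\partial_w^\alpha\tr_E a_{\fa-j}(\tilde Bw)$, i.e.\ a derivative of a symbol component of $A$ times a power of $\tilde b_0$; the conclusion (no $H$-dependence) is unaffected, but the intermediate claim should say ``homogeneous components of $a$ and their derivatives.'' Finally, your worry about uniform control of the stationary phase remainder in $r$ is misplaced here: in \eqref{4.12.4} the spherical integrand $d(\theta)\tilde h_\fm(\theta)^k$ is independent of $r$, so the remainder is simply $O(r^{-2N})$ and its radial integral against $r^{\fd+\fm k+n'-1}$ is $O(|\mu|^{\fd+\fm k+n'-2N})$ plus a constant, without any appeal to Lemma~\ref{4.10} or Corollary~\ref{4.11}.
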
 

Before giving the proof, let us note that this yields: 

{\em Proof of Theorem {\rm\ref{thm_resolvent}(e)}.} 
In view of the fact that $C_0$ in \eqref{coefficients} is independent of $H$ and 
$$\ln(-\gl) = \ln(\mu^\fm) = \fm \ln \mu = \ord H \ln\mu, $$
 the product $\fm c_0'$ in Theorem {\rm \ref{thm_resolvent}}(e) is independent of $H$ as asserted. 
\hfill$\Box$\smallskip

{\em Proof of Lemma} \ref{4.16}.
We use the stationary phase approximation with $\hbar = r^{-2}$ 
for the integral over the sphere. This results in terms 
\begin{eqnarray}
\mu^{-\fm k} c_{l,k,J}
\sum_l \sum_{J=0}^{N-1} 
\int_{1}^{|\mu|}e^{ir^2\mu_l}
\, r^{\fd+\fm k-2J+\gk_l-1}\, dr\label{4.14.1}
\end{eqnarray}
with 
\begin{eqnarray}
\label{4.14.1a}
c_{l,k,J} = \int_{\mathbb S^{\gk_l-1}}A_{2J,l}(\theta',\partial_{\gt'})(d(\gt)\tilde h_\fm(\gt)^k)\, dS
\end{eqnarray}
and a remainder term of the form 
\begin{eqnarray}\label{4.14.2}%
\mu^{-\fm k} \int_1^{|\mu|} F(r)\, dr
\end{eqnarray}
with a function $F$, independent of $\mu$, satisfying $F(r) = O(r^{\fd+\fm k-2N+\gk_l-1})$. 
For sufficiently large $N$ we  write
$$\mu^{-\fm k}\int_1^{|\mu|} F(r)\, dr = 
\mu^{-\fm k}\int_1^\infty F(r)\, dr  - \mu^{-\fm k}\int_{|\mu|}^\infty F(r)\, dr. $$   
and   estimate the second term on the right hand side by
\begin{eqnarray*}%
\mu^{-\fm k} C \int_{|\mu|}^\infty r^{\fd+\fm k-2N+\gk_l-1}\, dr=O( |\mu|^{\fd+\gk_l-2N}).
\end{eqnarray*}

Now for the terms in \eqref{4.14.1}. 

(i) For $l =1$ (and hence $\mu_1=0$) we obtain 
\begin{eqnarray}%
\frac{\mu^{-\fm k} c_{1,k,J}}{\fd+\fm k-2J+\gk_1} (|\mu|^{\fd+\fm k-2J+\gk_1}-1),&&\text{ if } \fd+\fm k-2J+\gk_1\not=0\text{ and} \nonumber\\ 
\mu^{-\fm k} c_{1,k,J} \ln |\mu|,&&\text{ if } \fd+\fm k-2J+\gk_1=0.
\label{log-term}
\end{eqnarray}
We note that $\ln \mu  = \ln(|\mu|\go) = \ln|\mu| + \ln\go$, so that  replacing $\mu^{-k\fm}\ln|\mu|$ by $\mu^{-k\fm} \ln\mu$ causes an error of the form $\mu^{-k\fm}\ln\go$ that will be treated in Lemma \ref{correct}, below. 
Taking into account the  factor $\mu^{-(K+\ell)\fm}$ in \eqref{4.12.3} we see that the powers of $\mu$ associated with the logarithmic term are multiples of $\fm$. 
Moreover, the leading order contribution  is $c_{1,0,J}\mu^{-K\fm} \ln\mu$:
From \eqref{4.12.3} we deduce immediately that it will be 
$o(|\mu|^{-K\fm})$ unless $\ell=k=0$.
By Proposition \ref{4.2}(b)  the only terms in the asymptotic expansion of $q$ of the form $d(y,p)(\tilde h_\fm(y ,p)+\mu^\fm)^{-K}$ with $\ell=0$ and $d$ homogenous for $|w|\ge1$ are those, where  $d$ only depends on the homogeneous components in the symbol expansion of $A$ and its derivatives. 
In particular, $d$ and hence the coefficient of $\mu^{-K\fm} \ln \mu$ are independent of the  operator $H$. 

(ii) For $\mu_l\not=0$, write  
$M=  \fd+\fm k-2J+\gk_l-1$ and integrate by parts:
\begin{eqnarray}
\lefteqn{ \int_1^{|\mu|} e^{ir^2\mu_l} r^M\, dr\label{4.16.7}
=\frac1{2i\mu_l} \int_1^{|\mu|} \partial_r(e^{ir^2\mu_l} )r^{M-1} \, dr}\\
&=&\frac1{2i\mu_l}\left( \left[ e^{ir^2\mu_l} r^{M-1} \right]_1^{|\mu|}-  \int_1^{|\mu|} e^{ir^2\mu_l} (M-1)r^{M-2} \, dr\right)\nonumber\\
&=&\frac1{2i\mu_l}(e^{i|\mu|^2\mu_l}|\mu|^{M-1} -e^{i\mu_l} )  -  
\frac1{2i\mu_l}  \int_1^{|\mu|} e^{ir^2\mu_l} (M-1)r^{M-2} \, dr.\nonumber
\end{eqnarray}

We iterate this finitely many times until we have reached an integral of the form \eqref{4.16.7} with $M\le -2$. Then we write 
\begin{eqnarray*}%
\int_1^{|\mu|} e^{ir^2\mu_l }r^{M} \, dr = \int_1^\infty e^{ir^2\mu_l}r^{M} \, dr 
- \int_{|\mu|}^\infty e^{ir^2\mu_l} r^{M} \, dr .
\end{eqnarray*} 
The first term is a constant independent of $\mu$. To the second we can apply the above integration by parts technique in order to obtain an expansion into terms of the form $c_{l,j}\, e^{i|\mu|^2\mu_l}|\mu|^{M-j}$, for suitable constants $c_{l,j}$, $j\in \N$, plus a remainder that is  $O(|\mu|^{-N})$ for arbitrary $N$  by the same argument as for \eqref{4.14.2}. 
\hfill$\Box$\smallskip

Next we analyze the contribution from the remainder term on the right hand side of \eqref{4.12.3}. We will have to consider terms of the form 
\begin{eqnarray}\label{4.14.5}\lefteqn{
\mu^{-\fm(K+\ell)}\ \int_{1}^{|\mu|} \int_{\mathbb S^{n'-1}}e^{ir^2\phi(\gt)} 
d(r\gt) 
({\tilde h_\fm(r\gt)}{\mu^{-\fm}} )^{mN}}\nonumber\\
\nonumber
&&\ \times (1+{\tilde h_\fm(r\gt)}{\mu^{-\fm}})^{-K-\ell} r^{n'-1}\, dSdr\\
&=&
\mu^{-\fm(mN+K+\ell)}\ \int_{1}^{|\mu|} \int_{\mathbb S^{n'-1}}e^{ir^2\phi(\gt)}
d(r\gt) 
\tilde h_\fm(r\gt)^{mN}
\label{4.14.5a}
\\
&&\ \times \left(1+{\tilde h_\fm(r\gt)}{\mu^{-\fm}}\right)^{-K-\ell} r^{n'-1}\, dSdr
\nonumber
\end{eqnarray}
for $m=1,2,\ldots, K+\ell$. 
We extend $d$ and $\tilde h_\fm$ by homogeneity to functions $d^{\sf \,hom}$ and 
$\tilde h_\fm^{\sf hom}$ on $\R^{n'}\setminus \{0\}$ and rewrite the  integral in 
\eqref{4.14.5a}
as 
\begin{eqnarray}
\lefteqn{
\int_{0}^{|\mu|} \int_{\mathbb S^{n'-1}}e^{ir^2\phi(\gt)}
d^{\sf \,hom}(r\gt) 
\tilde h^{\sf hom}_\fm(r\gt)^{mN}}\label{4.14.8}
\\
&&\left(1+{\tilde h^{\sf hom}_\fm(r\gt)}{\mu^{-\fm}}\right)^{-K-\ell} r^{n'-1}\, dSdr
\nonumber
\\
&&
\label{4.14.10} 
- \int_{0}^{1} \int_{\mathbb S^{n'-1}}e^{ir^2\phi(\gt)}
d^{\sf \,hom}(r\gt) 
\tilde h^{\sf hom}_\fm(r\gt)^{mN}
\\&&\ \times 
\left(1+{\tilde h^{\sf hom}_\fm(r\gt)}{\mu^{-\fm}}\right)^{-K-\ell} r^{n'-1}\, dSdr.
\nonumber
\end{eqnarray}
The integral  \eqref{4.14.10} defines
an analytic function of $\mu^{-\fm}$, provided $N$ is chosen suitably large. 
It therefore has an expansion into powers $\mu^{-j\fm}$, $j=0,1,\ldots$.  

Using the homogeneity and making the change of variables $r=s|\mu|$, we rewrite the integral \eqref{4.14.8} as
\begin{eqnarray}\lefteqn{\nonumber
|\mu|^{\fd+\fm mN+n'}\ \int_0^1\int_{\mathbb S^{n'-1}} e^{i\phi(\gt)|\mu|^2s^2} d^{\sf hom}(\gt) 
\tilde h_\fm^{\sf hom}(\gt)^{mN} }\\
&&\times \left(1+{\tilde h_\fm^{\sf hom} (s\gt)}{\go^{-\fm}}\right)^{-K-\ell}dS\ 
s^{\fd+\fm mN+n'-1} \, ds.\label{4.14.11}
\end{eqnarray}

\begin{lemma}\label{4.18} 
The expression \eqref{4.14.11} has an asymptotic expansion into terms of the form 
$$ e^{i|\mu|^2\mu_l} |\mu|^{\fd+\fm m N+\gk_l-j}g_{l,j}(\go) $$
for smooth functions $g_{l,j}$, $j\in \N_0$. 
\end{lemma}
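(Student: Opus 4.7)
The strategy is to apply the stationary phase expansion of Lemma~\ref{4.8} to the inner integral over $\mathbb{S}^{n'-1}$ with small parameter $\hbar=(|\mu|s)^{-2}$, reducing \eqref{4.14.11} to a finite sum of one-dimensional $s$-integrals for which either direct evaluation (when $\mu_l=0$) or iterated integration by parts (when $\mu_l\ne 0$) suffices. The amplitude
\begin{eqnarray*}
G(s,\theta,\omega):=d^{\sf hom}(\theta)\,\tilde h^{\sf hom}_\fm(\theta)^{mN}\,\bigl(1+s^\fm \tilde h^{\sf hom}_\fm(\theta)\omega^{-\fm}\bigr)^{-K-\ell}
\end{eqnarray*}
is smooth in all three variables, since $\tilde h^{\sf hom}_\fm>0$ on $\mathbb{S}^{n'-1}$ by the positivity of $h_\fm$ and the orthogonality of $\tilde B$, and since $\omega^{-\fm}$ is smooth on the compact arc where $\omega$ varies. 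Substituting $\hbar=(|\mu|s)^{-2}$ in Lemma~\ref{4.8} and combining with the outer factors $|\mu|^{\fd+\fm mN+n'}s^{\fd+\fm mN+n'-1}$ produces, for each pair $(l,J)$, model integrals of the form
\begin{eqnarray*}
|\mu|^{\fd+\fm mN+\gk_l-2J}\int_0^1 s^{\fd+\fm mN+\gk_l-2J-1}\,e^{i|\mu|^2s^2\mu_l}\,H_{l,J}(s,\omega)\,ds,
\end{eqnarray*}
with $H_{l,J}$ smooth in $(s,\omega)$.

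\textbf{Evaluation.} For $\mu_l=0$ the $s$-integral is independent of $\mu$ and defines a smooth function $g_{l,2J}(\omega)$, giving a term of the claimed form with $j=2J$. For $\mu_l\ne 0$, substitute $u=s^2$ and integrate by parts iteratively via $\partial_u e^{i|\mu|^2\mu_l u}=i|\mu|^2\mu_l\,e^{i|\mu|^2\mu_l u}$. Since the exponent of $u$ at $u=0$ equals $(\fd+\fm mN+\gk_l-2J-2)/2$ and can be made arbitrarily large by choosing the geometric-series order $N$ large, the boundary contributions at $u=0$ vanish after any number of iterations. The boundary contributions at $u=1$ produce $e^{i|\mu|^2\mu_l}$ times smooth functions of $\omega$, and each iteration extracts a factor $|\mu|^{-2}$. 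Since $\mu_l u$ has no stationary point on $(0,1]$, iteration generates an asymptotic series in $|\mu|^{-2}$, yielding contributions
\begin{eqnarray*}
e^{i|\mu|^2\mu_l}|\mu|^{\fd+\fm mN+\gk_l-2J-2-2j'}\tilde g_{l,J,j'}(\omega),\qquad j'\in\N_0,
\end{eqnarray*}
of the required form with $j=2J+2+2j'$, smoothness of the coefficients in $\omega$ being inherited from that of $G$.

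\textbf{Main obstacle.} The principal difficulty is the non-uniformity of the stationary phase formula as $s\to 0^+$, where $\hbar=(|\mu|s)^{-2}$ fails to be small. I address this by reformulating \eqref{4.14.11} as a single oscillatory integral: the substitution $w=s\theta$, together with the positive homogeneity of $d^{\sf hom}$, $\tilde h^{\sf hom}_\fm$ and $\phi$ of degrees $\fd$, $\fm$ and $2$ respectively, shows that
\begin{eqnarray*}
\eqref{4.14.11}=|\mu|^{\fd+\fm mN+n'}\int_{|w|\le 1}e^{i|\mu|^2\phi(w)}A(w,\omega)\,dw,
\end{eqnarray*}
with $A(w,\omega)=d^{\sf hom}(w)\tilde h^{\sf hom}_\fm(w)^{mN}(1+\tilde h^{\sf hom}_\fm(w)\omega^{-\fm})^{-K-\ell}$ positively homogeneous on $\R^{n'}\setminus\{0\}$ of degree $\fd+\fm mN$. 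For $N$ large this degree exceeds any prescribed $C^k$-regularity threshold at $w=0$, so $A$ is as regular there as required for the order of expansion sought. The analysis above then corresponds to a Morse--Bott stationary phase expansion at the critical subspace $\{w:\tilde\lambda_j w_j=0\ \text{for all }j\}$ of $\phi$ (producing the $\mu_l=0$ terms) together with boundary stationary phase at the unit eigenvectors of $\phi$ on $\{|w|=1\}$ (producing the $\mu_l\ne 0$ terms), now uniformly in the large parameter $|\mu|^2$. This completes the proof of Lemma~\ref{4.18}.
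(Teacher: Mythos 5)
Your ``Plan'' and ``Evaluation'' steps follow the paper's route exactly: apply the stationary phase expansion of Lemma~\ref{4.8} to the spherical integral with $\hbar=(|\mu|s)^{-2}$, then for $\mu_l=0$ read off the coefficient directly and for $\mu_l\ne0$ iterate integration by parts in $s$. However, the ``Evaluation'' step as written is incomplete, and the flaw is exactly the one you then flag as the ``Main obstacle'': the substitution $\hbar=(|\mu|s)^{-2}$ makes $\hbar$ order $1$ (or large) for $s\lesssim|\mu|^{-1}$, so the stationary phase formula is not being applied with a small parameter over the whole $s$-range, and nothing in your argument controls the contribution of the remainder term $O(\hbar^{\tilde N+(n'-\kappa_l)/2}\sup|\partial_{\theta'}^\alpha(\cdot)|)$ after it is integrated against $s^{\fd+\fm mN+n'-1}$ over $[0,1]$. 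The paper closes this gap by staying in the polar picture: it verifies (via explicit differentiation) that $\partial^\alpha_{\theta'}(1+h^{\sf hom}_\fm(s\theta)\omega^{-\fm})^M=O(s^\fm)$ uniformly for $0<s\le 1$, so the spherical integral in \eqref{4.18.2} and the stationary phase remainder are uniformly bounded in $s$; substituting $\hbar=(|\mu|s)^{-2}$ and integrating then yields a remainder of order $|\mu|^{\kappa_l-n'-2\tilde N}$, provided $\tilde N$ is constrained by the convergence condition $-2\tilde N+\kappa_l+\fd+\fm mN>0$ --- with both $N$ and $\tilde N$ still free to tend to infinity. These quantitative checks (the $O(s^\fm)$ bounds on the amplitude derivatives, the convergence constraint linking $\tilde N$ to $N$, and the resulting remainder estimate) are the real content of the proof, and they are what you have omitted.

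Your attempted repair in the ``Main obstacle'' paragraph is only a sketch and, as stated, contains an error. The reformulation as $|\mu|^{\fd+\fm mN+n'}\int_{|w|\le 1}e^{i|\mu|^2\phi(w)}A(w,\omega)\,dw$ is correct, but $A(w,\omega)$ is \emph{not} positively homogeneous in $w$ of degree $\fd+\fm mN$: the factor $(1+\tilde h^{\sf hom}_\fm(w)\omega^{-\fm})^{-K-\ell}$ is not homogeneous. (Its presence does not destroy the finite $C^k$-regularity of $A$ at $w=0$ --- that is governed by the homogeneous factor --- so the conclusion you draw is salvageable, but the stated reason is wrong.) More importantly, the appeal to ``Morse--Bott stationary phase at the critical subspace together with boundary stationary phase at the unit eigenvectors of $\phi$ on $\{|w|=1\}$'' is a program, not a proof: you have not stated the theorem being invoked, verified its hypotheses (in particular the nondegeneracy of the Hessian transverse to the critical submanifold and the treatment of the corner where the critical sphere meets the boundary), nor checked that it produces exactly the powers $|\mu|^{\fd+\fm mN+\kappa_l-j}$ and oscillatory factors $e^{i|\mu|^2\mu_l}$ asserted in the Lemma. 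If you pursue this alternative route you must supply these details; as written, the argument has a genuine gap.

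A smaller technical point: the substitution $u=s^2$ in your integration by parts introduces $H_{l,J}(\sqrt u,\omega)$, a smooth function of $u^{\fm/2}$, which is not smooth in $u$ at $u=0$ when $\fm$ is odd. The paper sidesteps this by integrating by parts directly in the radial variable using the identity $\partial_r(e^{ir^2|\mu|^2\mu_l})r^{M-1}=2i|\mu|^2\mu_l e^{ir^2|\mu|^2\mu_l}r^M$ (as in Lemma~\ref{4.12}). Your version still extracts a factor $|\mu|^{-2}$ per step because only the boundary term at $u=1$ contributes, but the bookkeeping of $u$-powers is then messier than you indicate.
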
 

\begin{proof} 
The integral in \eqref{4.14.8} can be written
\begin{eqnarray*}
|\mu|^{\fd+\fm m N +n'}\int_0^1 I(\hbar;s,\go)_{|\hbar=|\mu|^{-2}s^{-2}}  \ 
s^{\fd+\fm mN+n'-1} \, ds,
\end{eqnarray*}
where 
\begin{eqnarray*}%
I(\hbar;s,\go) = \int_{\mathbb S^{n'-1}} e^{\frac i\hbar\phi(\gt)} d^{\sf hom}(\gt) 
\tilde h_\fm^{\sf hom}(\gt)^{mN} 
\left(1+{\tilde h_\fm^{\sf hom} (s\gt)}{\go^{-\fm}}\right)^{-K-\ell}dS.
\end{eqnarray*}

Similarly as in the analysis of \eqref{4.7.3}/\eqref{4.7.4}, 
we apply the stationary phase approximation. 
With the notation introduced after \eqref{4.7.4} we obtain, as in Lemma \ref{4.8}, 
for $\tilde N\in \N$, 
\begin{eqnarray*}\lefteqn{
I(\hbar; s,\go) = \sum_{J=0}^{\tilde N-1}
\hbar^{J+\frac{n'-\kappa_l}2} 
\sum_l e^{\frac i\hbar \mu_l}}\\
&&\times \int_{\mathbb S^{\kappa_l-1}}A_{2J}(\theta', \partial_{\theta'})
\left( d^{\sf hom\,}(\theta) h^{\sf hom}_\fm(\gt)^{mN} 
(1+{h^{\sf hom}_\fm(s\gt)}{\go^{-\fm}})^{-K-\ell}\right)
\, dS\\
&+& O\Big(\sum_l\hbar^{\tilde N+\frac{n'-\gk_l}2} \sum_{|\alpha|\le 2\tilde N+n'-\kappa_l+1} 
\sup \left|\partial^\alpha_{\theta'}\left(d^{\sf hom\,}(\theta)h^{\sf hom}_\fm(r\gt)^{mN}
\right.\right.\\
&&\left.\left.\left.\times(1+{h^{\sf hom}_\fm(s\gt)}{\go^{-\fm}})^{-K-\ell}\right)\right|\right)
\end{eqnarray*}
with differential operators $A_{2J}$ of order $2J$ in the variables $\theta'$ transversal to the fixed point set.

Let us first study the terms under the summation.  
Evaluating $\hbar$ at $s^{-2} |\mu|^{-2}$ yields expressions  
\begin{eqnarray}
\lefteqn{\sum_l  |\mu|^{-2J-n'+\kappa_l}
\int_0^1 e^{ is^2|\mu|^2 \mu_l}
\nonumber}\\
&&\times \int_{\mathbb S^{\kappa_l-1}}A_{2J}(\theta', \partial_{\theta'})
\left( d^{\sf hom\,}(\theta) h^{\sf hom}_\fm(\gt)^{mN} 
(1+{h^{\sf hom}_\fm(s\gt)}{\go^{-\fm}})^{-K-\ell}\right)
\, dS \label{4.18.2}\\
&&\times s^{-2J+\kappa_l+\fd+\fm mN-1} ds.\nonumber
\end{eqnarray}

We note that, for $M\in \Z$, and $0< s\le 1$
\begin{eqnarray*}\lefteqn{
\partial_{\gt'}(1+h^{\sf hom}_\fm(s{\gt}) \go^{-\fm})^{M} }\\
&=&  sM (1+h^{\sf hom}_\fm(s\gt) \go^{-\fm})^{M-1} 
\go^{-\fm} (\partial_{\xi} \tilde h_\fm^{\sf hom}) (s{\gt}) = O(s^\fm) ;
\end{eqnarray*}
by induction, also the higher $\theta'$-derivatives are $O(s^\fm)$. Similarly,  
\begin{eqnarray*}%
\lefteqn{
\partial_{s}(1+h^{\sf hom}_\fm(s{\gt}) \go^{-\fm})^{M} }\\
&=&  M (1+h^{\sf hom}_\fm(s{\gt}) \go^{-\fm})^{M-1} 
\go^{-\fm} (\partial_{\xi} \tilde h_\fm^{\sf hom}) (s{\gt}) \gt= O(s^{\fm-1}) 
\end{eqnarray*}
and the $j$-th $s$-derivative is $O(s^{\fm-j})$. 
In particular, the integral over the sphere in \eqref{4.18.2} 
is uniformly bounded in $s$ for $0<s\le1$.  
 
We choose $\tilde N$ so that $-2\tilde N +\kappa_l+\fd+\fm N>0$, so that the integral in $s$ over $[0,1] $ exists.  
For  $J=0, \ldots, \tilde N-1$  we may then 
apply the  integration by parts technique we used in the proof of Lemma \ref{4.12}. 
In view of the estimates on $s$-derivative of $(1+h^{\sf hom}_\fm(s{\gt}) \go^{-\fm})^{M}$ above, we obtain, for each $\mu_l\not=0$, 
an expansion into terms $e^{i|\mu|^2\mu_l} |\mu|^{\gk_l-n'-j}g_{jl}(\go)$ for $j=2J,\ldots ,2\tilde N-2$ and smooth functions  $g_{jl}$, plus a remainder which is $O(|\mu|^{\gk_l-n'-2\tilde N})$. 

Finally we consider the remainder term in the stationary phase approximation. 
Evaluating at $\hbar = s^{-2}|\mu|^{-2}$ and using the above estimates on the
$\theta'$-derivatives of the powers of $(1+h^{\sf hom}_\fm(s{\gt}) \go^{-\fm})$, 
we see that the remainder is $O(|\mu|^{\kappa_l-n'-2\tilde N})$.

Since $N$ could be taken arbitrarily large, also $\tilde N$ can be made arbitrarily large, and so we obtain the desired expansion. 
\end{proof}

\subsubsection*{Step $8$. Correcting the expansion} 
As pointed out above, our analysis produces 
a number of terms that  are not of the form stated in Theorem \ref{thm_resolvent}, see Lemmas
\ref{4.12}, \ref{4.16}, \ref{4.18}.
As we will see in Lemma \ref{correct}, they necessarily add up to zero. This is a phenomenon already observed by Grubb and Seeley,  see \cite[Lemma 2.3]{GS1}. \medskip

\begin{lemma}\label{correct}
The expansion of \eqref{eq:simple} is only in powers of $\mu$ and $\ln \mu$ as stated in  
Theorem {\rm \ref{thm_resolvent}}.
\end{lemma}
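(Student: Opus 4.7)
The strategy is a uniqueness-of-asymptotic-expansion argument in the spirit of \cite[Lemma 2.3]{GS1}. Write $F(\mu)$ for the left hand side of \eqref{eq:simple}. The crucial observation is that $F$ depends on $\mu$ only through $\mu^\fm=-\gl$: indeed $F(\mu)=G(\mu^\fm)$ with $G(-\gl)=\Tr(R_gT_wA(H-\gl)^{-K})$, and by Proposition \ref{prop_resolvent}, $G$ is meromorphic on $S_\gd$ and holomorphic for $|\gl|$ large. Restricting to any ray $\mu=r\go$ with $\go=e^{i\gt}$ fixed and $|\gt|<\gd/\fm$, the function $r\mapsto F(r\go)=G(r^\fm\go^\fm)$ is smooth in $r>0$ with all derivatives of polynomial growth, and it admits a unique asymptotic expansion (modulo $O(r^{-N})$ for every $N$) within the class of log-polyhomogeneous functions of $r$.

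I would then collect all contributions produced by Lemmas \ref{4.7}, \ref{4.12}, \ref{4.16}, \ref{4.18} and sort them into two classes: admissible terms, namely finite sums of $\mu^{-k}$ and $\mu^{-k\fm}\ln\mu$ (with coefficients that may depend on $\go$), and non-admissible ones of exactly two kinds, the oscillatory pieces $e^{i|\mu|^2\mu_l}|\mu|^{-k}g_{k,l}(\go)$ with $\mu_l\ne 0$, and the residues $\mu^{-k\fm}\ln\go$ created by the replacement $\ln|\mu|\leadsto\ln\mu$ indicated after \eqref{log-term}. On a fixed ray the factor $\ln\go=i\gt$ is constant, so these residues are of admissible form and simply update the constants $c_j''$; the substantive remaining task is to prove that the oscillatory pieces cancel.

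For the cancellation I would invoke the linear independence, modulo $O(r^{-N})$ for every $N$, of the family $\{r\mapsto e^{ir^2\mu_l}r^{-k}:\mu_l\ne 0,\ k\in\N_0\}$ from the log-polyhomogeneous functions of $r$. This independence is a consequence of the Riemann--Lebesgue lemma applied after a smooth cutoff, or equivalently of repeated non-stationary integration by parts, the phase $r^2\mu_l$ having no stationary point for $r>0$ when $\mu_l\ne 0$. Combined with the uniqueness of the admissible expansion of $F(r\go)$ from the first step, this forces the total coefficient of each oscillatory monomial $e^{ir^2\mu_l}r^{-k}$ in the sum of our contributions to vanish, for every $\go$ in a subsector of the interior of $S$, and hence the full expansion of $F(\mu)$ consists of admissible terms only, as claimed in Theorem \ref{thm_resolvent}.

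The main obstacle is the careful bookkeeping of the $\go$-dependence across all the oscillatory sources: the coefficients $g_{k,l}(\go)$ from Lemma \ref{4.12}, the boundary contributions of the integration by parts \eqref{4.16.7} in Lemma \ref{4.16}, and the stationary phase remainders of Lemma \ref{4.18} arise from independent calculations but must conspire to produce a total that cancels identically. The underlying algebraic identity is precisely the one made explicit by Grubb and Seeley in \cite[Lemma 2.3]{GS1}; adapting their bookkeeping to the present weakly parametric and matrix-valued setting is the technical heart of the argument.
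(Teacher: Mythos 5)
Your proposal correctly identifies the substantive issue (the oscillatory terms $e^{i\mu_l|\mu|^2}|\mu|^{-k}g_{k,l}(\omega)$ produced by Lemmas \ref{4.12}, \ref{4.16}, \ref{4.18} must cancel) and correctly handles the cosmetic $\ln|\mu|$ versus $\ln\mu$ discrepancy, but the core cancellation argument you propose is circular. You assert that $r\mapsto F(r\omega)$ ``admits a unique asymptotic expansion within the class of log-polyhomogeneous functions of $r$'' and then combine this with the linear independence of the oscillatory monomials from that class to force the oscillatory coefficients to vanish. But the \emph{existence} of a purely log-polyhomogeneous expansion of $F(r\omega)$ is precisely what Lemma \ref{correct} claims; what the preceding lemmas actually establish is a mixed expansion containing both admissible and oscillatory terms. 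Uniqueness within the log-polyhomogeneous class says only that at most one such expansion exists, and linear independence says only that an oscillatory sum cannot masquerade as a log-polyhomogeneous one -- together these determine the coefficients of the mixed expansion uniquely, but they give no reason for the oscillatory coefficients to be zero. On a single ray, a smooth function with polynomially bounded derivatives can perfectly well have nonzero terms $e^{ir^2\mu_l}r^{-k}$ in its unique asymptotic expansion; there is nothing pathological about that.

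The ingredient you cite but do not actually use -- that $F$ is holomorphic as a function of the two-dimensional complex variable $\mu$ on the sector, not merely smooth on each ray -- is what the paper exploits and what your argument is missing. The point is that $e^{i\mu_l|\mu|^2}$ depends on $|\mu|=\sqrt{\mu\bar\mu}$ and is therefore \emph{not} holomorphic in $\mu$, and this incompatibility is detected by integrating the rescaled identity \eqref{10.1} over a closed contour $\gamma$ inside the sector: by Cauchy's theorem the holomorphic right-hand side contributes nothing in the limit $t\to 0^+$, while the left-hand side produces boundary terms $e^{i\mu_l r_1^2/t^2}$ and $e^{i\mu_l r_2^2/t^2}$ that oscillate at incommensurable rates for generic $r_1,r_2$; the existence of the limit then forces the coefficients $g_l$ to vanish near every $\omega$. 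A fix for your write-up would have to replace the ray-by-ray uniqueness claim with such a two-dimensional rigidity argument, or otherwise show directly (not just assert) that $F(r\omega)$ has an expansion free of oscillatory terms.
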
 

\begin{proof}
Consider the leading order 
term and write 
$$\Tr(R_gT_wA(H+\mu^\fm)^{-K})= 
(\mu^d\ln^k \mu) \Big(\sum e^{i\mu_l|\mu|^2}g_l(\omega)
+ h(\omega) \Big) + o(|\mu|^d\ln^k |\mu|)$$
as $\mu\to \infty$ in $S$ 
 with $k\in \{0,1\}$. Clearly, we may restrict the summation to the $\mu_l\not=0$. 

Equivalently we have 
\begin{eqnarray}\label{10.1}
\lefteqn{\hspace*{-5cm} \sum e^{i\mu_l|\mu|^2}g_l(\omega)+ h(\omega)
= (\mu^{-d}\ln^{-k} \mu)\Tr(R_gT_wA(H+\mu^\fm)^{-K}) + o(1).}
\end{eqnarray}
Given a point $z=re^{i\varphi}$, $r>0$, 
$|\varphi|<\pi$,
and $r_1<r<r_2$ close to $r$, 
$\varphi_1<\varphi<\varphi_2$ close to $\varphi$ we next define a contour $\gamma$ as follows (see Fig.~1): 
From $r_1 e^{i\varphi_1}$ to $r_2e^{i\varphi_1}$ along the ray $\arg z = \varphi_1$, from 
$r_2e^{i\varphi_1}$ to $r_2e^{i\varphi_2}$ counterclockwise along the circle	 $|z|=r_2$, from 
$r_2e^{i\varphi_2}$ to $r_1e^{i\varphi_2}$ along the ray $\arg z = \varphi_2$,  and from 
$r_1e^{i\varphi_2}$ back to $r_1e^{i\varphi_1}$ clockwise along the circle	 $|z|=r_1$.

\begin{center}
\begin{minipage} {0.5\textwidth}
\begin{tikzpicture}
\path (0,0) coordinate (P0);
\path (12em,0) coordinate (Q0);
\path (3em,0) coordinate (Q1);
\path (5em,0) coordinate (Q2);

\path (40:10em) coordinate (P1);
\path (40:14em) coordinate (P2);
\path (45:12em) coordinate (P);
\path (50:14em) coordinate (P3);
\path (50:10em) coordinate (P4);

\path (40:12em) coordinate (M1);
\path (50:12em) coordinate (M2);
\path (45:10em) coordinate (M3);
\path (45:14em) coordinate (M4);

\draw (P) circle (0.1em); 
\draw (P1) circle (0.1em); 
\draw (P2) circle (0.1em); 
\draw (P3) circle (0.1em); 
\draw (P4) circle (0.1em); 
\draw (P0) -- (Q0); 
\draw (P0) -- (P2);
\draw (P0) -- (P3);
\draw (P2) arc(40:50:14em); 
\draw (P1) arc(40:50:10em); 
\draw (Q1) arc(0:40:3em);
\draw (Q2) arc(0:50:5em);

\draw (M1) -- ++(-0.1em,-0.35em);
\draw (M1) -- ++(-0.35em,0.05em);
\draw (M2) -- ++(0.01em,+0.35em);
\draw (M2) -- ++(0.35em,0.05em);
\draw (M3) -- ++(-0.35em,-0.05em);
\draw (M3) -- ++(-0.05em,+0.35em);
\draw (M4) -- ++(0.35em,0.05em);
\draw (M4) -- ++(0.05em,-0.35em);

\path (33:11em) node (P10) {$r_1e^{i\varphi_1}$};
\path (35:15em) node (P20) {$r_2e^{i\varphi_1}$};
\path (56:10em) node (P40) {$r_1e^{i\varphi_2}$};
\path (54:14em) node (P30) {$r_2e^{i\varphi_2}$};
\path (45:13em) node (PP)  {$re^{i\varphi}$};
\path (20:2em) node (R1) {$\varphi_1$};
\path (23:4em) node (R2) {$\varphi_2$};
\path (45:15em) node (G)  {$\gamma$};

\end{tikzpicture}\\
Fig.~1. The path $\gamma$.
\end{minipage}  
\end{center}

In Equation \eqref{10.1} we replace $\mu$ by $\mu/t$, $t>0$,  and integrate the identity over 
the contour $\gamma$. Since $\Tr(R_gT_wA(H+\mu^\fm)^{-K})$ is holomorphic in $\mu$
and the remainder is $o(1)$, the corresponding integrals on the right hand side will tend to zero as $t\to 0^+$. Hence also the left hand side will tend to zero. 
Clearly $\int_\gamma h(\omega) \, d\mu$ is independent of $t$. 
So let us  compute the terms $\int_\gamma e^{i\mu_l|\mu|^2/t^2}g_l(\omega) d\mu$. 

With the obvious parametrization of the four contours defining  $\gamma$ we obtain
\begin{eqnarray}\lefteqn{
\int_\gamma e^{i\mu_l|\mu|^2/t^2}g_l(\omega) d\mu}\nonumber\\
&=&g_l(\varphi_1)e^{i\varphi_1} \int_{r_1}^{r_2} e^{i\mu_l s^2/t^2} ds
+ e^{i\mu_lr_2^2/t^2}\int_{\varphi_1}^{\varphi_2} g_l(s) ir_2e^{is}\, ds \nonumber\\
&&-g_l(\varphi_2) e^{i\varphi_2}\int_{r_1}^{r_2} e^{i\mu_l s^2/t^2} ds\\
&&-e^{i\mu_lr_1^2/t^2}\int_{\varphi_1}^{\varphi_2} g_l(\varphi_1+\varphi_2-s) ir_1e^{i(\varphi_1+\varphi_2-s)}\, ds
\nonumber\\
&=&\left(g_l(\varphi_1)e^{i\varphi_1}- g_l(\varphi_2)e^{i\varphi_2}\right) \int_{r_1}^{r_2} e^{i\mu_l s^2/t^2} ds\label{10.2}\\
&&+ ir_2e^{i\mu_lr_2^2/t^2}\int_{\varphi_1}^{\varphi_2} g_l(s) e^{is}\, ds\label{10.2a} \\
&&-ir_1e^{i\mu_lr_1^2/t^2}\int_{\varphi_1}^{\varphi_2} g_l(\varphi_1+\varphi_2-s) e^{i(\varphi_1+\varphi_2-s)}\, ds
\label{10.3}\end{eqnarray}
As $t\to 0^+$, the term \eqref{10.2} tends to zero.  
We therefore conclude that  the sum over all $l$ of the terms in \eqref{10.2a} and  \eqref{10.3} also
attains a limit. 
However, as the $\mu_l$ are all different and non-zero, the exponentials 
$e^{i\mu_lr_1^2}$ and $e^{i\mu_lr_2^2}$ will be all different for almost all  
choices of $r_1$ and $r_2$. Hence the existence of a limit as $t\to 0^+$ implies 
that all coefficients vanish, i.e. 
$$ \int_{\varphi_1}^{\varphi_2} g_l(s) e^{is}\, ds=0= \int_{\varphi_1}^{\varphi_2} g_l(\varphi_1+\varphi_2-s) e^{i(\varphi_1+\varphi_2-s)}\, ds$$
for all $l$ and all choices of $\varphi_1$ and $\varphi_2$.
This in turn shows that all $g_l$ are zero near $\omega = \varphi$. 
 From this we deduce that 
 $$\int_\gamma   h(\omega) d\mu=0$$
for all these contours $\gamma$.
Since the point $z$ was arbitrary, we conclude that all the $g_l$ are zero and $h$ is holomorphic and - as it only depends on $\omega$ -  even constant. 
Hence the coefficient of the leading order term $\mu^d\ln^k\mu$ is a constant. We can then apply the same conclusion iteratively to the lower order terms. 
This completes the argument.  
\end{proof}

\begin{remark}
From another point of view, the terms with $e^{i|\mu|^2\mu_l}$  arise at the sphere $|\xi| =  |\mu|$ in the integrals with respect to $\xi$  over the areas $1\le |\xi|\le |\mu|$ and $|\mu|\le |\xi|$ and therefore should cancel.
\end{remark}

For the proof of Theorem {\rm \ref{thm_resolvent}} 
it remains to note that by   \eqref{phase3} and \eqref{eq-diag2}), $\varkappa_1=2\dim  \mathbb{C}^n_g$ is the real dimension of the fixed point set of $g\in \U(n)$.

\section{Complex Powers and Heat Trace Expansions}

As a consequence of Theorem {\rm \ref{thm_resolvent}}, we obtain two more results.
We use the notation of Theorem {\rm \ref{thm_resolvent}}, in particular \eqref{sector} for $S_\delta$. 

\begin{theorem}\label{thm_zeta}
Assume, moreover,  that  $H-\gl$ is invertible for all $\gl\in S_{\gd}\cup U_r(0)$ for some $r>0$. Then 
\begin{enumerate} \renewcommand{\labelenumi}{(\alph{enumi})}
\item The function 
$z\mapsto \zeta_{R_gT_wA}(z):=\Tr(R_gT_wAH^{-z})$ is  holomorphic in the half-plane   $\{z\in \C: \fm \re z > 2n+\fa\}$.

\item $\zeta_{R_gT_wA}$ has a meromorphic continuation to $\C$ with at most simple poles 
in the points $(2m+\fa-j)/\fm$ and $z=-j$, $j=0,1,\ldots$, where $m$ is the complex dimension of the
fixed point set of $g$. 
The function $\Gamma \zeta_{R_gT_wA}$ has the pole structure \eqref{trace_exp}.

\item The coefficients $\tilde c_j, \tilde c_j'$ and $\tilde c_j''$ in \eqref{trace_exp}  are related to the coefficients $c_j, c_j'$ and $c_j''$ in \eqref{resolvent_exp} by universal constants. In particular, we have 
$$\Res\zeta_{R_gT_wA} = c_0' = \tilde c_0'.$$ 
 
\item If the fixed point set of the affine mapping $\mathbb{C}^n\to\mathbb{C}^n, v\mapsto gv+w$ is empty, then $\zeta_{R_gT_wA}$ has no poles.

\item $\zeta_{R_gT_wA}$ has rapid decay along vertical lines $z=c+it$, $t\in \mathbb R:$
\begin{equation}\label{eq-decay3}
 |\zeta_{R_gT_wA}(z)\Gamma(z)|\le C_N (1+|z|)^{-N},\quad \text{for all }N\ge 0, 
 \quad |\im z|\ge 1,
\end{equation}
uniformly for $c$ in compact intervals.
\end{enumerate} 
\end{theorem}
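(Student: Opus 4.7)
The strategy is the standard Grubb--Seeley transformation \cite{GS2} from resolvent trace asymptotics to zeta function singularities. The central tool is the Cauchy-type representation
\begin{equation*}
H^{-z} = \frac{i\,\Gamma(K)\,\Gamma(z-K+1)}{2\pi\,\Gamma(z)}\int_{\mathcal{C}}\lambda^{-z+K-1}(H-\lambda)^{-K}\,d\lambda,
\end{equation*}
valid in a suitable right half-plane, with $\mathcal{C}$ a Hankel-type contour lying in $S_\delta\cup U_r(0)$ and winding around the spectrum of $H$; existence of such a contour is guaranteed by the invertibility hypothesis. Multiplying by $R_gT_wA$ and taking the trace (legal once $K$ is large as in Theorem \ref{thm_resolvent}) yields an integral representation of $\zeta_{R_gT_wA}(z)$ into which the asymptotic expansion \eqref{resolvent_exp} can be substituted.

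For part (a), if $\fm\re z>2n+\fa$ then $AH^{-z}\in\Psi^{\fa-\fm\re z}_{\rm cl}$ has order strictly less than $-2n$, hence is of trace class on $L^2(\R^n)$; composition with the $L^2$-bounded operator $R_gT_w$ preserves trace class, and holomorphy in $z$ follows from the analytic dependence of the family $H^{-z}$ in trace norm. For (b) and (c), substitute \eqref{resolvent_exp} termwise into the contour integral. The power term $c_j(-\lambda)^{(2m+\fa-j)/\fm-K}$ paired with $\lambda^{-z+K-1}$ produces a simple pole of $\zeta_{R_gT_wA}$ at $z=(2m+\fa-j)/\fm$; the integer-power term $c_j''(-\lambda)^{-j-K}$ produces a simple pole of $\Gamma(z)\zeta_{R_gT_wA}$ at $z=-j$; and the logarithmic term $c_j'\ln(-\lambda)(-\lambda)^{-j-K}$ produces a double pole of $\Gamma(z)\zeta_{R_gT_wA}$ at $z=-j$. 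Combining with the Gamma-function factors from the Cauchy formula yields \eqref{trace_exp}; the coefficients $\tilde c_j,\tilde c_j',\tilde c_j''$ emerge as explicit universal multiples of $c_j,c_j',c_j''$, giving in particular $\Res_{z=0}\zeta_{R_gT_wA}=c_0'=\tilde c_0'$.

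For (d), Theorem \ref{thm_resolvent}(d) provides $O(|\lambda|^{-\infty})$ decay of the resolvent trace when the affine map has no fixed point, so all asymptotic contributions vanish and $\Gamma(z)\zeta_{R_gT_wA}$ is entire, leaving $\zeta_{R_gT_wA}$ pole-free. For (e), one shifts the contour $\mathcal{C}$ outward proportionally to $|\im z|$; using the uniform resolvent estimate from Theorem \ref{thm_resolvent}(b) together with Stirling's asymptotics for the ratio $\Gamma(z-K+1)/\Gamma(z)$ and for $\Gamma(z)$ along vertical lines gives rapid decay of the product $\zeta_{R_gT_wA}(z)\Gamma(z)$, uniformly for $\re z$ in compact intervals.

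The main obstacle is the careful bookkeeping of the interplay between the Gamma factors in the Cauchy representation and the external factor $\Gamma(z)$: one must verify that the double poles of $\Gamma(z)\zeta_{R_gT_wA}$ at the nonpositive integers are reduced to simple poles of $\zeta_{R_gT_wA}$ itself through the simple zeros of $1/\Gamma(z)$, producing exactly the structure \eqref{trace_exp} with the claimed relations among the coefficients. A secondary subtlety is the contour deformation used for (e), which must balance the factor $\lambda^{-z+K-1}$ along $\mathcal{C}$ against the Gamma asymptotics.
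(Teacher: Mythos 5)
Your proposal is correct and takes essentially the same route as the paper: both rely on the Grubb--Seeley transformation from the resolvent trace expansion to the zeta-function singularity structure via a Hankel-type contour integral. The paper packages this step into Proposition \ref{GrubbS} (quoted from \cite{GS2}) applied to $f(\gl)=\Tr(R_gT_wA(H-\gl)^{-K})$, then relates $\zeta_{R_gT_wA}$ to the auxiliary function $Z$ by an integration-by-parts identity, whereas you write out the equivalent Cauchy formula and termwise substitution explicitly; the two are the same computation in different notation.
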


With Theorem \ref{thm_resolvent}  established, the  crucial observation for the proof of Theorem \ref{thm_zeta} is the proposition, below. 
It is adapted from \cite[Proposition 2.9 and Corollary 2.10]{GS2}  for the convenience of the reader.

\def\Zeta{Z} 
\begin{proposition}\label{GrubbS}
{\rm(a)} Let $0<\delta_0\le\pi$, $r_0>0$, and  let $f: U_{r_0}(0) \cup S_{\delta_0}\to \mathbb C$ be a meromorphic function with a Laurent expansion near  $\lambda =0$:
$$f(\gl) = \sum_{j=-k}^\infty b_j(-\gl)^{j}. $$
Assume, moreover,  that  $f(\lambda) = O(|\lambda|^{-\alpha})$ for some $\alpha\in{]0,1]}$ as $|\lambda|\to \infty$ in each sector $S_{\delta}$ for $\delta<\delta_0$. Define 
\begin{eqnarray}\label{GrubbS.1}%
\Zeta(s) = \frac{i}{2\pi} \int_{\scrC_r} \lambda^{-s} f(\lambda) \, d\lambda, \ \re s>1-\alpha,
\end{eqnarray}
for the contour $\scrC_{r}$, $r<r_0$, in $\mathbb C$ going from infinity to $-r$  on the ray 
$\{se^{i\pi} : s\ge r\}$, clockwise about the origin on the circle of radius $r$ and back to infinity along the ray $\{se^{-i\pi}: s\ge r\}$. ,  described above.  
Then the function $\frac{\pi\Zeta(s)}{\sin \pi s}$ is meromorphic for $\re s>1-\alpha$
and has at most simple poles in $s=j+1$ and residues $(-1)^{j+1} \Zeta(j+1)=:-b_j$, $j=0,1,\ldots$. 

Moreover, the following are equivalent:
\begin{enumerate}
\renewcommand{\labelenumi}{{\rm (\roman{enumi})}}
\item For every $\delta<\delta_0$, $f$ has an asymptotic expansion as $\lambda$ goes to infinity 
\begin{eqnarray}\label{GrubbS.2}%
f(-\lambda)  \sim \sum_{j=0}^\infty \sum_{l=0}^{m_j} a_{j,l} \lambda^{-\alpha_j}\ln^l\lambda ,
\end{eqnarray}
with $0<\alpha_{j}\nearrow +\infty$ as $j\to \infty$ and $m_j\in \mathbb N_0$,
uniformly for $-\lambda\in S_\delta$. 

\item $\frac{\pi\Zeta(s)}{\sin \pi s}$ extends meromorphically to $\mathbb C$ with the singularity structure 
\begin{eqnarray}
\frac{\pi\Zeta(s)}{\sin \pi s} \sim
-\sum_{j=-k}^\infty \frac{b_j}{s-j-1} + 
\sum_{j=0}^\infty \sum_{l=0}^{m_j} \frac{\alpha_{jl}l!} {(s+\alpha_j-1)^{l+1}},
\end{eqnarray}  
and for each real $C_1,C_2$ and each $\delta<\delta_0$ 
\begin{eqnarray}\label{GrubbS.3}%
\left|\frac{\Zeta(s)}{\sin \pi s} \right| \le Ce^{-\delta|\im s| } , \quad |\im s|\ge 1, C_1\le \re s\le C_2, 
\end{eqnarray}
where $C$ depends on $C_1$, $C_2$, and $\delta$. 
\end{enumerate}     

{\rm (b)}  When $f$ and $\Zeta$ are as in {\rm (a)}, then $\Zeta\Gamma$ is meromorphic on $\C$ with the singularity structure
$$\Gamma(z)\Zeta (z)\sim \sum_{j=-k}^{-1} \frac{-\tilde b_j}{z-j-1} 
+ \sum_{j=0}^\infty \sum_{l=0}^{m_j} \frac{\tilde a_{jl}l!}{(z+\ga_j-1)^{l+1}} , \quad \tilde b_j = \frac{b_j}{\gG(-j)}, \tilde a_{jl} = \frac{a_{jl}}{\gG(-\ga_j)}.  
$$ 
When $\gd_0>\pi/2$ one has moreover, for any $\gd'<\gd_0-\pi/2$ and any real $C_1$ and $C_2$:
$$
|\Gamma(s)\Zeta(s)| \le C(C_1,C_2,\gd') e^{-\gd' |\im z|}  ,\quad  |\im z |\ge 1, C_1\le\re z \le C_2. 
$$
\end{proposition}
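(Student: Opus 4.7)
The plan is to adapt the Grubb--Seeley argument of \cite[Prop.~2.9, Cor.~2.10]{GS2}; the statement closely tracks theirs, so the main work is careful bookkeeping rather than new ideas. First I would split $\scrC_r$ into the two rays $\lambda = t e^{\pm i\pi}$, $t\ge r$, and the circle $|\lambda|=r$. On the rays the phases combine to $e^{-i\pi s}-e^{i\pi s}=-2i\sin\pi s$, and on the circle the Laurent expansion of $f$ can be integrated term by term. A short calculation then gives
\[
  \frac{\pi\Zeta(s)}{\sin\pi s}
   =\int_r^\infty t^{-s}f(-t)\,dt
   -\sum_{j=-k}^{\infty}\frac{b_j\,r^{j+1-s}}{s-j-1},
\]
in which the first term is holomorphic for $\re s>1-\alpha$ and the sum produces the simple poles at $s=j+1$ with residues $-b_j$, yielding in particular $(-1)^{j+1}\Zeta(j+1)=-b_j$.

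For (i)$\Rightarrow$(ii), I would substitute the asymptotic expansion \eqref{GrubbS.2} into the remaining integral; each term $a_{j,l}\,t^{-\alpha_j}\ln^l t$ integrates to an elementary meromorphic function with a pole of order $l+1$ at $s=1-\alpha_j$ and leading coefficient $a_{j,l}\,l!$, so summing gives the asserted singularity structure. The decay estimate \eqref{GrubbS.3} would be obtained by rotating the two rays of $\scrC_r$ to $\arg\lambda=\pi\mp\delta$, $\delta<\delta_0$, which is allowed since $f$ is holomorphic in $S_{\delta_0}\cup U_{r_0}(0)$; on the rotated rays $|\lambda^{-s}|=t^{-\re s}e^{\pm(\pi-\delta)\im s}$, and combining with $|\sin\pi s|^{-1}\le 2e^{-\pi|\im s|}$ for $|\im s|\ge1$ yields the bound $e^{-\delta|\im s|}$. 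For the converse (ii)$\Rightarrow$(i) I would use Mellin inversion: the integral $\int_r^\infty t^{-s}f(-t)\,dt$ is (up to an obvious change of variable) a Mellin transform, and shifting the inverse Mellin contour leftward past each pole of $\pi\Zeta/\sin\pi s$ picks up exactly the residues corresponding to the terms $a_{j,l}\,t^{-\alpha_j}\ln^l t$, with \eqref{GrubbS.3} furnishing the decay needed to close the contour.

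For part (b), the reflection formula $\Gamma(s)=\pi/[\sin\pi s\,\Gamma(1-s)]$ gives
\[
  \Gamma(s)\Zeta(s)=\frac{\pi\Zeta(s)}{\sin\pi s}\cdot\frac{1}{\Gamma(1-s)}.
\]
The simple zeros of $1/\Gamma(1-s)$ at $s=1,2,\dots$ cancel the simple poles of $\pi\Zeta/\sin\pi s$ at $s=j+1$ for $j\ge 0$, leaving only the poles at $s=j+1$, $-k\le j\le-1$, with residues $\tilde b_j=b_j/\Gamma(-j)$; the poles at $s=1-\alpha_j$ persist with leading coefficients $\tilde a_{j,l}=a_{j,l}/\Gamma(-\alpha_j)$. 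The vertical-strip estimate follows from Stirling, which gives $|\Gamma(1-s)|^{-1}\le C|s|^{\re s-1/2}e^{-\pi|\im s|/2}$; combined with \eqref{GrubbS.3} this yields $|\Gamma(s)\Zeta(s)|\le Ce^{-(\delta+\pi/2)|\im s|}$, and taking $\delta$ close to $\delta_0$ covers any $\delta'<\delta_0-\pi/2$, which is exactly where the assumption $\delta_0>\pi/2$ is used.

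The hard part will be the bookkeeping at resonant points: when some $\alpha_j$ coincides with a positive integer, the pole from the asymptotic expansion of $f$ overlaps with the ``trivial'' pole of $1/\sin\pi s$, and in part (b) also with the zero of $1/\Gamma(1-s)$, so that orders of poles can drop and coefficients need to be computed by a limiting procedure. Tracking these overlaps correctly is the delicate step; in every other respect the argument is a direct transcription of Grubb and Seeley.
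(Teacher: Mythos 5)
The paper does not actually prove this proposition: it is introduced with the remark that it ``is adapted from [GS2, Proposition~2.9 and Corollary~2.10] for the convenience of the reader,'' with no proof supplied. So there is no in-paper argument to compare against; your task was really to reconstruct Grubb--Seeley's proof, and your outline does exactly that and is sound in its essentials. The splitting of $\scrC_r$ into rays plus circle, the identity $e^{-i\pi s}-e^{i\pi s}=-2i\sin\pi s$ on the rays, termwise integration of the Laurent series on the circle yielding
\[
\frac{\pi\Zeta(s)}{\sin\pi s}=\int_r^\infty t^{-s}f(-t)\,dt-\sum_{j\ge -k}\frac{b_j\,r^{j+1-s}}{s-j-1},
\]
termwise integration of the asymptotic expansion of $f(-t)$ for (i)$\Rightarrow$(ii), contour rotation for the vertical-strip bound, and Mellin inversion for (ii)$\Rightarrow$(i) are precisely the Grubb--Seeley steps.

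Two corrections. First, your Stirling estimate has the sign of the exponent reversed: one has $|\Gamma(1-s)|^{-1}\sim C\,|s|^{\re s-1/2}e^{+\pi|\im s|/2}$, not $e^{-\pi|\im s|/2}$. Combining with \eqref{GrubbS.3} therefore gives $|\Gamma(s)\Zeta(s)|\le Ce^{-(\delta-\pi/2)|\im s|}$ rather than $e^{-(\delta+\pi/2)|\im s|}$. Your concluding sentence ``taking $\delta$ close to $\delta_0$ covers any $\delta'<\delta_0-\pi/2$'' is exactly what the corrected bound delivers, so the logic is right even though the displayed inequality is not, and this is genuinely where $\delta_0>\pi/2$ is used. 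Second, evaluating $\Gamma(z)\Zeta(z)=\frac{\pi\Zeta(z)}{\sin\pi z}\cdot\frac1{\Gamma(1-z)}$ at $z=1-\alpha_j$ gives a factor $\frac1{\Gamma(1-(1-\alpha_j))}=\frac1{\Gamma(\alpha_j)}$, so one should obtain $\tilde a_{jl}=a_{jl}/\Gamma(\alpha_j)$ rather than $a_{jl}/\Gamma(-\alpha_j)$; the latter appears to be a typographical slip in the statement as printed (contrast the $b_j$ case, where $\frac1{\Gamma(1-(j+1))}=\frac1{\Gamma(-j)}$ is indeed correct). You copied the printed formula instead of following through your own reflection-formula computation, which would have caught this. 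Your closing remark about resonant $\alpha_j$ (where poles of the two families coincide and orders can degenerate) correctly identifies the only step of the Grubb--Seeley argument requiring real care.
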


{\em Proof} of Theorem \ref{thm_zeta}.
We apply Proposition \ref{GrubbS}(a)  to the function 
$$f(\lambda) = \Tr (R_gT_wA(H-\lambda)^{-K}).  
$$
The invertibility of $H-\gl$ implies that $f$ is holomorphic in $U_r(0)\cup S_\gd$. 
In particular, the Laurent coefficients  $b_{-k}, \ldots, b_{-1}$ all vanish. 
Theorem \ref{thm_resolvent} guarantees the decay at infinity and the existence of the asymptotic expansion for $f(-\gl)$ as $\gl\to \infty$ in  $S_\gd$.  

So the assumptions in Proposition \ref{GrubbS} are fulfilled and from (b) we obtain the pole structure of $\Gamma (z) \Zeta(z)$ as well as the decay along vertical lines. 

Next, integration by parts shows that 
\begin{eqnarray*}
\zeta_{R_gT_wA}(z+K-1) =  (-1)^{K-1}  \frac{(K-1)!}{z\cdots (z+K-2)} \, Z(z), \quad K\ge2, 
\end{eqnarray*} 
and so 
\begin{eqnarray*}
\lefteqn{
\zeta_{R_gT_wA} (s) \gG(s) = \frac{(-1)^{K-1}(K-1)!}{(s-K+1)\cdots (s-1)}Z(s-K+1)\gG(s)}\\ 
&=& 
(K-1)!\ Z(s-K+1)\gG(s-K+1).
\end{eqnarray*}
Hence the poles of $\zeta_{R_gT_wA} (s)\gG(s)$ are just those of $\Zeta(s)\gG(s)$ shifted by $K-1$; the residues differ by universal factors. We obtain the statements  in Theorem 
\ref{thm_zeta}. \hfill $\Box$
\medskip

\begin{theorem}\label{thm_exp}Assume additionally that $H-\gl$ is invertible for $\gl\in S_{\gd_0}\cup U_{r_0}(0)$ for some $\gd_0\in {]\pi/2, \pi]} $, $r_0>0$. Then
\begin{enumerate} \renewcommand{\labelenumi}{(\alph{enumi})}
\item The function $t\mapsto \Tr(R_gT_wA\exp(-tH)) $ is defined for all $t>0$. 
\item As $t\to 0^+$ it has the asymptotic expansion \eqref{exp_exp}
with the same coefficients as for the zeta function in Theorem {\rm \ref{thm_zeta}}. 
\end{enumerate}
\end{theorem}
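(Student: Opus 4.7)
For part (a), I define $e^{-tH}$ via the Dunford--Riesz contour integral
$$
e^{-tH} = \frac{(K-1)!}{2\pi i (-t)^{K-1}} \int_{\mathcal{C}} e^{-t\lambda}(\lambda-H)^{-K}\, d\lambda,
$$
obtained by $K-1$-fold integration by parts from the standard formula for the semigroup, where $\mathcal{C}$ is a Hankel-type contour lying in $S_{\delta_0}\cup U_{r_0}(0)$ and encircling the spectrum of $H$. Because $\gd_0>\pi/2$, the factor $|e^{-t\lambda}|$ decays exponentially along the rays of $\mathcal{C}$ for any $t>0$. Composing with $R_gT_wA$ and invoking Theorem \ref{thm_resolvent}(a), the integrand is of trace class for $K$ so large that $-K\fm+\fa<-2n$, and the integral converges in trace norm; thus $R_gT_wAe^{-tH}$ is of trace class and its trace may be computed by integrating the trace of the integrand along $\mathcal{C}$.

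For part (b), the strategy is Mellin inversion. Starting from the identity $H^{-z} = \frac{1}{\Gamma(z)}\int_0^\infty t^{z-1} e^{-tH}\,dt$ (valid for $\re z$ large after multiplication by $R_gT_wA$) and the corresponding inversion formula
$$
e^{-tH} = \frac{1}{2\pi i} \int_{\re z = c} t^{-z} \Gamma(z)\, H^{-z} \, dz,\qquad c>(2n+\fa)/\fm,
$$
one obtains, after taking traces,
$$
\Tr(R_gT_wA e^{-tH}) = \frac{1}{2\pi i} \int_{\re z = c} t^{-z}\, \Gamma(z)\, \zeta_{R_gT_wA}(z) \, dz.
$$
Absolute convergence of the right-hand side is guaranteed by the rapid vertical-line decay~\eqref{eq-decay3} of Theorem \ref{thm_zeta}(e). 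I then shift the contour to $\re z=-N$ for arbitrarily large $N$, picking up residues at the poles of $\Gamma(z)\zeta_{R_gT_wA}(z)$ listed in \eqref{trace_exp}. A simple pole at $z=(2m+\fa-j)/\fm$ with residue $\tilde c_j$ contributes $\tilde c_j\,t^{(j-2m-\fa)/\fm}$; a double pole $\tilde c_j'/(z+j)^2$ contributes $-\tilde c_j'\,t^j\ln t$, using $\frac{d}{dz}t^{-z}\big|_{z=-j}=-t^j\ln t$; and a simple pole $\tilde c_j''/(z+j)$ contributes $\tilde c_j''\,t^j$. The shifted integral on $\re z=-N$ is $O(t^N)$ as $t\to 0^+$, and so the asymptotic expansion \eqref{exp_exp} follows, with precisely the coefficients produced by Theorem \ref{thm_zeta}.

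The main technical obstacle will be justifying the contour shift, specifically showing that the contributions of the horizontal segments $\im z=\pm T$, $-N\le\re z\le c$, vanish as $T\to\infty$. This is ensured by the uniform-in-$\re z$ decay \eqref{eq-decay3} of $\Gamma(z)\zeta_{R_gT_wA}(z)$ along vertical lines, combined with the fact that $|t^{-z}|=t^{-\re z}$ is independent of $\im z$, so the integrand decays rapidly in $|\im z|$ uniformly on each horizontal strip. A secondary subtlety is making the operator-valued Mellin inversion rigorous; this is done by combining the Dunford representation of $H^{-z}$ with the sector assumption $\gd_0>\pi/2$, exactly as in Grubb--Seeley \cite{GS2}. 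Once these two ingredients are in place, the rest is a mechanical residue calculation.
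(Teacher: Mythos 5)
Your proposal is correct and follows essentially the same route as the paper: both rest on the Mellin transform/inversion framework of Grubb and Seeley, which the paper invokes as Proposition \ref{GrubbSe2} (applied to $r(\lambda)=R_gT_wA(H-\lambda)^{-1}$, using \ref{GrubbSe2}(c) to set up the Mellin pair $e(t),f(s)$ and \ref{GrubbSe2}(b) to pass from the pole structure of $\Gamma\zeta_{R_gT_wA}$ in Theorem \ref{thm_zeta} to the small-$t$ expansion). The contour shift and residue calculation you carry out by hand are exactly the content of the cited equivalence, and your use of $(\lambda-H)^{-K}$ obtained by integration by parts to secure trace-class convergence mirrors the paper's device of differentiating $r(\lambda)$ $j$ times; the arguments are the same in substance.
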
 

The proof relies on the following result of  
Grubb and Seeley \cite[Proposition 5.1]{GS2} that we state here for convenience: 

\begin{proposition} \label{GrubbSe2}
{\rm (a)}  Let $e(t)$ be  function holomorphic in a sector 
$V_{\gt_0}=\{z=re^{i\phi}: r>0,|\phi|<\gt_0\}$  for some $\gt_0\in\ ]0, \pi [$,
such that $e(t)$ decreases exponentially for $|t| \to  \infty$ and is $O(|t|^a)$ for $t \to  0$ in $V_\gt$,  
any $\gt<\gt_0$, for some $a\in \R$. Let $f$ be the Mellin transform of $e$,
\begin{eqnarray}
\label{(5.6)}
 f (s) = (\cM e)(s) := \int_0^\infty t^{s-1} e(t) \,dt,\quad  \re s > -a.
\end{eqnarray}
Then $f(s)$ is holomorphic for $\re s > -a$ and $f(c + i\xi)$ is $ O(e^{-\gt|\xi|})$ for $|\xi| \to  \infty$, 
when $c > -a$, uniformly for $c$ in compact intervals of $ ]-a, \infty[$; 
and $e(t)$ is recovered from $f(s)$ by the formula
\begin{eqnarray}
\label{(5.7)}
e(t) = \frac1{2\pi i} \int_{\re s=c}t^{-			s}f(s)\,ds.
\end{eqnarray}

{\rm (b)}  Moreover, the following properties {\rm(i)} and  {\rm(ii)} are equivalent:

{\rm (i)} $e(t)$ has an asymptotic expansion for $t \to  0$,
\begin{eqnarray}
\label{(5.8)}
 e(t) \sim\sum_{j=0}^\infty \sum_{l=0}^{m_j} 
 a_{j, l} t^{\beta_j} (\ln t)^l, \quad \beta_j \to \infty, 
 m_j \in  \{0,1,2,\ldots\},
\end{eqnarray}
uniformly for $t \in V_\gt$, for each $\gt < \gt_0$.

{\rm (ii)} $ f(s)$ is meromorphic on $\C$ with the singularity structure
\begin{eqnarray}
\label{(5.9)}
 f(s) \sim\sum_{j=0}^\infty\sum_{l=0}^{m_j} \frac{(-1)^ll!a_{j,l}}{(s+\beta_j)^{l+1}}
\end{eqnarray}
and for each real $C_1, C_2$ and each $\gt < \gt_0$,
\begin{eqnarray}
\label{(5.10)}
 |f(s)| \le C(C_1,C_2,\gt) e^{-\gt |\im s|}, \quad |\im s| \ge 1, C_1 \le \re s \le C_2.
\end{eqnarray}

{\rm (c)} 
Let $r(\lambda)$  be holomorphic in $S_{\gd_0} = \{|\pi-\arg \lambda| < \gd_0\}$ for some $\gd_0 \in {]\frac\pi2,\pi]}$ with values in a Banach space and meromorphic at $\lambda = 0$ 
(holomorphic for $0 < |\lambda| < \rho$). 
Assume that as $\lambda\to \infty$ in $S_\gd$ {\rm(}for $\gd<\gd_0)$, $\partial^j_\lambda  r(\lambda)$ is 
$O(|\lambda|^{-1-\epsilon})$ for some $\epsilon>0$ {\rm(}so that $r(\lambda)$ is
$O(|\lambda|^{j-1}))$. 
Let $\gt_0$  and $\gt$ be such that ${]\gt-\gt_0,\gt+\gt_0[}\subset {]\pi-\gd_0, \frac\pi2[}$, let $\scrC = \scrC_{\gt,r_0}$ be the contour given by the clockwise oriented boundary of the set  
$$\{z\in \C: |\arg z| \ge\gd\} \cup U_{r_0}(0)$$
with $r_0 \in{]0,\rho[}$, and let 
\begin{eqnarray}
\label{(5.11)}
 e(t) = \frac i{2\pi} \int_\scrC e^{-t\lambda} r(\lambda) \, d\lambda, \quad f(s) = \Gamma(s) \frac i{2\pi}  \int_\scrC\lambda^{-s}r(\lambda)\, d\lambda, 
\end{eqnarray}
for $t \in  V_{\gt_0}$ resp. $\re s > j - \epsilon$. Then $e(t)$ is exponentially decreasing for $t \to  \infty$ in sectors $V_\gd$ with $\gd < \gt_0$, and is $O(|t|^{-j})$ for $t \to  0$, 
and $f(s)$ and $e(t)$ correspond to one another by \eqref{(5.6)}, \eqref{(5.7)}.

\end{proposition}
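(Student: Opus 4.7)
The plan is to prove the three parts of the proposition in order, leaning on the classical Mellin-transform machinery. The unifying idea is that holomorphy of $e(t)$ in a sector translates, via contour rotation, between growth properties of $e$ and decay properties of $f$, while the pole-structure correspondence is obtained by explicitly subtracting off asymptotic tails.

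For (a), the holomorphy of $f$ on $\{\re s>-a\}$ is routine: near $t=0$ the bound $e(t)=O(|t|^a)$ makes $t^{s-1}e(t)$ integrable, the exponential decay at infinity is more than enough, and differentiation under the integral with Morera justifies holomorphy. The exponential decay of $f(c+i\xi)$ along vertical lines is the decisive point: since $e$ extends holomorphically to $V_{\theta_0}$ with the stated bounds, Cauchy's theorem lets me rotate the ray $\{t>0\}$ to the ray $\{re^{i\phi}:r>0\}$ with $\phi=-\sgn(\xi)\theta$, any $\theta<\theta_0$. Writing $t=re^{i\phi}$ converts $t^{s-1}$ into $r^{c-1}e^{i\phi(c-1)}e^{-\phi\xi}$, which produces the desired factor $e^{-\theta|\xi|}$; the arc at infinity vanishes by exponential decay of $e$, and the piece near $0$ by the $O(|t|^a)$ bound. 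The inversion formula \eqref{(5.7)} then follows from Fourier inversion after the substitution $t=e^u$, turning $\mathcal{M}$ into a Fourier transform of $e^{cu}e(e^u)$.

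For (b), direction (i)$\Rightarrow$(ii) proceeds by truncating the expansion. Fix a cutoff $\chi\in C^\infty(\R_+)$ with $\chi\equiv 1$ near $0$ and $\chi\equiv 0$ for $t\ge1$, and for each $N$ write
\begin{equation*}
e(t)=\sum_{\beta_j<N}\sum_{l=0}^{m_j}a_{j,l}\chi(t)t^{\beta_j}(\ln t)^l+R_N(t),
\end{equation*}
where $R_N(t)=O(|t|^N)$ as $t\to 0$ in $V_\theta$ and still decays exponentially at infinity. The elementary identity $\int_0^1 t^{s+\beta_j-1}(\ln t)^l\,dt=\frac{(-1)^l l!}{(s+\beta_j)^{l+1}}$ (plus an entire contribution from $\chi-1$ and from $[1,\infty)$) supplies the stated poles at $s=-\beta_j$, while $\mathcal{M}(R_N)$ is holomorphic for $\re s>-N$ by part (a). Sending $N\to\infty$ gives the meromorphic extension \eqref{(5.9)}, and the decay estimate \eqref{(5.10)} follows by applying the bound from (a) to $R_N$ on strips where the explicit poles have been subtracted. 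Conversely, (ii)$\Rightarrow$(i) is obtained from the inversion formula \eqref{(5.7)}: shift the contour $\{\re s=c\}$ leftward across the poles $s=-\beta_j$, picking up residues equal to $a_{j,l}t^{\beta_j}(\ln t)^l$ (since $\Res_{s=-\beta_j}t^{-s}(s+\beta_j)^{-l-1}=\frac{(-1)^l(\ln t)^l t^{\beta_j}}{l!}$), and estimating the shifted contour integral as $O(|t|^{-c'})$ via the exponential decay \eqref{(5.10)} on vertical lines.

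For (c), the bridge is the identity $\Gamma(s)\lambda^{-s}=\int_0^\infty t^{s-1}e^{-t\lambda}\,dt$, valid for $\re\lambda>0$ and $\re s>0$. Substituting this into the definition of $f(s)$ in \eqref{(5.11)} and applying Fubini, justified by the decay $r(\lambda)=O(|\lambda|^{-1-\gve})$ together with the choice of the contour $\scrC$, gives $f(s)=\int_0^\infty t^{s-1}e(t)\,dt$, so that $e$ and $f$ are Mellin conjugates as in (a), (b). To see that $e(t)$ decays exponentially in $V_\vartheta$ for $\vartheta<\vartheta_0$, rotate $\scrC$ toward the rays $\{\arg\lambda=\pm\delta'\}$ with $\delta'$ close to $\delta_0$; then for $t\in V_\vartheta$ the factor $e^{-t\lambda}$ decays as $e^{-c|t||\lambda|}$, and the rotated integral converges by the decay of $r$. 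The $O(|t|^{-j})$ bound as $t\to 0$ is obtained by $j$ integrations by parts in $\lambda$, trading derivatives of $e^{-t\lambda}$ (each producing a factor $-t$) against $\partial^j_\lambda r(\lambda)=O(|\lambda|^{-1-\gve})$; the boundary terms decay by hypothesis. The main obstacle is the bookkeeping of these contour deformations in part (c): one must simultaneously accommodate the small-circle part of $\scrC$ around $\lambda=0$ (where $r$ may have a pole) and the rotation near the sector boundary at infinity, while keeping Fubini applicable. Once these manipulations are secured, parts (a) and (b) supply the rest of the analytic structure automatically.
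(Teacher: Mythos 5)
The paper does not actually prove Proposition~\ref{GrubbSe2}; it is stated verbatim (``adapted from \cite[Proposition 5.1]{GS2} for the convenience of the reader'') and the burden is entirely on the citation to Grubb--Seeley. Your proposal therefore cannot be compared with a proof in this paper; it can only be checked on its own merits. The overall architecture you choose is the standard one: contour rotation inside the sector for (a), subtraction of the expansion tail for (b)(i$\Rightarrow$ii), contour shifting across poles for (b)(ii$\Rightarrow$i), and the identity $\Gamma(s)\lambda^{-s}=\int_0^\infty t^{s-1}e^{-t\lambda}\,dt$ together with integration by parts for (c). Two minor slips first: in (a) the rotation angle should be $\phi=+\sgn(\xi)\theta$, not $-\sgn(\xi)\theta$ (with $\phi=-\sgn(\xi)\theta$ the factor $e^{-\phi\xi}=e^{\theta|\xi|}$ grows rather than decays); in (c) integration by parts trades the derivative in $\lambda$ for a factor $t^{-1}$, not $-t$, although your final conclusion $O(|t|^{-j})$ is right.

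There is, however, a genuine gap in the proof of the decay estimate \eqref{(5.10)} in (b)(i$\Rightarrow$ii). You obtain it by ``applying the bound from (a) to $R_N$,'' but $R_N$ as you define it does not satisfy the hypotheses of (a): with the real cutoff $\chi\in C^\infty(\R_+)$ the function $\chi(t)\,t^{\beta_j}(\ln t)^l$ has no holomorphic extension to the sector $V_{\theta_0}$, so neither does $R_N$; and if instead you drop the cutoff, the undamped tail $\sum a_{j,l}t^{\beta_j}(\ln t)^l$ grows at infinity and $R_N$ loses the required exponential decay. Either way, (a) is not applicable, and the cutoff terms by themselves only yield polynomial (not exponential) decay of their Mellin transforms on vertical lines, so \eqref{(5.10)} does not follow from the decomposition alone. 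The standard repair is to rotate the original integral \emph{before} truncating: by Cauchy and the holomorphy of $e$ in $V_{\theta_0}$, the defining integral can be moved to the ray $\arg t=\theta$, giving $f(s)=e^{i\theta s}\,\mathcal M\bigl[e(\cdot\,e^{i\theta})\bigr](s)$; the truncation argument applied to $e(re^{i\theta})$ along this ray then shows that $\mathcal M\bigl[e(\cdot\,e^{i\theta})\bigr](s)$ stays $O(1)$ on vertical strips away from the poles, and the prefactor $|e^{i\theta s}|=e^{-\theta\im s}$ furnishes the exponential decay \eqref{(5.10)}; choosing the rotation sign according to $\sgn(\im s)$ covers both half-planes. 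You should either make this rotation explicit or invoke it; as written the decay estimate is unjustified.
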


{\em Proof} of Theorem \ref{thm_exp}. We consider the function 
$$r(\gl) = R_gT_wA(H-\gl)^{-1}. $$ 
By our assumption, it is holomorphic for  $\gl\in S_{\gd_0}\cup U_{r_0}(0)$ for some $\gd_0 \in {]\frac\pi2,\pi]}$ and small $r_0>0$, taking values in $\scrB(L^2(\R^n), \cH^{\fm-\fa}(\R^n))$, uniformly in $\gl$.  
Moreover,  $\partial^j_\gl r(\gl) = (j-1)!\ R_gT_wA(H-\gl)^{-j-1}$. 
Hence, for $j$ large enough,  $\partial_\gl^j r(\gl)$ will be a family of bounded operators on $L^2(\R^n)$, and $O(\gl^{-1-\gve})$  for suitable $\gve>0$ as $\gl\to \infty$. Increasing $j$ even further, we obtain a bounded family of trace class operators that is $O(\gl^{-1-\gve})$.  

Proposition  \ref{GrubbSe2}(c) therefore implies that we can define the functions $e(t)$ and $f(s)$ in 
\eqref{(5.11)}.  Moreover, it follows that $e(t)$ is exponentially decreasing for $t\to \infty$ in a  sector 
$V_{\gt_0}$ about the positive real axis (with the notation from \ref{GrubbSe2}(c)). 
Since  we have for arbitrary $k\in \N$ 
$$ e(t) = \frac i{2\pi} \int_{\scrC}e^{-t\gl}r(\gl) \, d\gl =\frac {it^{-k}}{2\pi} \int_{\scrC}e^{-t\gl} 
\partial^k_\gl r(\gl)\, d\gl ,
$$
we can differentiate $e$ 
with respect to $t$ as a complex variable, and see that 
$t\mapsto e(t)$ is even holomorphic in $V_{\gt_0}$. Hence the same is true for 
$$t\mapsto \Tr(e(t))= \Tr(R_gT_wA e^{-tH}).
$$
So the assumptions of Proposition \ref{GrubbSe2}(a) are fulfilled and we deduce from 
part (b)  and Theorem \ref{thm_zeta} the statements of Theorem \ref{thm_exp}. 
\hfill $\Box$

\section{The Noncommutative Residue and Equivariant Traces} 
Let $G$ be a  subgroup of $\C^n\rtimes \U(n)$ and 
$$D=\sum_{(w,g) \in G} R_gT_wA \in \scrA,$$ 
where the sum is finite.

\begin{definition}\label{def_ncr}
For $w_0\in \C^n$, $g_0\in \U(n)$ such that $(w_0,g_0)\in G$ we define the noncommutative residue 
$\res_{\skp{(w_0,g_0)}} $
localized at the conjugacy class $\skp{(w_0,g_0)}$ 
in $G$ for an operator  $D$ as above  by 
\begin{eqnarray*}\label{def_res}
\lefteqn{\res_{\skp{(w_0,g_0)}} D = \ord H
\sum_{(w,g)\in \skp{(w_0,g_0)}}\Res \zeta_{A,g,w}(z)}\\
&=& \ord H \sum_{(w,g)\in \skp{(w_0,g_0)}} c_0' ( R_gT_wA)
=\ord H \sum_{(w,g)\in \skp{(w_0,g_0)}}\tilde c_0' ( R_gT_wA)
\end{eqnarray*} 
with the coefficients $c_0'$ and $\tilde c_0'$ introduced in Theorem
\ref{thm_resolvent} and Theorem \ref{thm_zeta}, respectively, 
and the order $\ord H$ of the auxiliary operator $H$ used in the construction of the trace expansion. 
Note that the noncommutative residue is independent of $H$ by Theorem \ref{thm_resolvent}(e), see also Lemma \ref{4.16}. 
\end{definition} 

\begin{theorem}\label{3.8} 
For every choice of a conjugacy class $\skp{(w_0,g_0)}$, the  functional $\res_{\skp{(w_0,g_0)}}$ is  a trace.
\end{theorem}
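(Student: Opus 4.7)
My plan is to reduce, by linearity, to showing $\res_{\skp{(w_0,g_0)}}(XY - YX) = 0$ for single monomials $X = R_{g_1} T_{w_1} A_1$ and $Y = R_{g_2} T_{w_2} A_2$. Using the commutation relations \eqref{eq-comm1}, \eqref{eq-comm.Sh}, \eqref{Egorov} and \eqref{cr12}, I first write both products as single monomials
$$XY = \phi_{12}\, R_{g_1 g_2} T_{v_{12}} B_{12}, \qquad YX = \phi_{21}\, R_{g_2 g_1} T_{v_{21}} B_{21},$$
with $v_{12} = g_2^{-1} w_1 + w_2$, $v_{21} = g_1^{-1} w_2 + w_1$, Shubin operators $B_{12}, B_{21}$ built from conjugates of $A_1, A_2$, and unimodular phases $\phi_{12}, \phi_{21}$. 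A direct computation in $\C^n \rtimes \U(n)$ shows that conjugation by $(g_1, w_1)$ sends $(g_2 g_1, v_{21})$ to $(g_1 g_2, v_{12})$, so the two pairs determine the same conjugacy class. Hence either both lie in $\skp{(w_0,g_0)}$ or neither does; in the latter case both $c_0'$-contributions vanish by Theorem~\ref{thm_resolvent}(d).

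The central technical ingredient in the remaining case is a conjugation invariance of $c_0'$: for any unitary $U = R_h T_v$ and any monomial $Z = R_g T_w A$, one has $c_0'(UZU^{-1}) = c_0'(Z)$. This follows from Theorem~\ref{thm_resolvent}(e): the identity $\Tr(UZU^{-1}H^{-z}) = \Tr(Z\,\tilde H^{-z})$ with $\tilde H := U^{-1}HU$ displays the conjugated trace as one with a new auxiliary operator, and $\tilde H$ is admissible of the same order $\fm$ because $R_h$ commutes with $H_0$ by $\U(n)$-invariance while $T_v$ adds only a first-order perturbation, so the scalar positive principal symbol is preserved.

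Next I use this together with the cyclicity of $\Tr$ (for $\re z \gg 0$) to prove $c_0'(XY) = c_0'(YX)$. Setting $U_X = R_{g_1} T_{w_1}$ and commuting $H^{-z}$ past $U_X$ yields
$$\Tr((XY - YX) H^{-z}) = \Tr(Y U_X (\tilde H^{-z} - H^{-z}) A_1) + \Tr(Y U_X [H^{-z}, A_1]),$$
with $\tilde H = U_X^{-1} H U_X$. Cyclicity together with the $H$-independence just established forces the residue at $z=0$ of the first summand to vanish, since
$$\Res\Tr(Y U_X \tilde H^{-z} A_1) = c_0'(A_1 Y U_X;\tilde H) = c_0'(A_1 Y U_X;H) = \Res\Tr(Y U_X H^{-z} A_1).$$
For the second summand I plan to invoke the Wodzicki $z$-factor trick: the identity $[H^{-z}, A_1] = -z\int_0^1 [H^{-tz}\log H, A_1]\,dt$ extracts an explicit factor of $z$, and after a further cyclic rearrangement the residue at $z=0$ is controlled by the log-contributions to $\zeta_{[A_1, YU_X]}$; these must vanish because the principal symbol of $[A_1, YU_X]$ reduces to a commutator of scalar symbols on the fixed-point set of $(g_2 g_1, v_{21})$ and so is zero there, while the subleading Poisson-bracket term integrates to zero on the unit sphere of $(\C^n)^{g_2 g_1}$ by the divergence theorem.

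The hardest step will be making this last argument rigorous within the weakly parametric framework of the appendix. It amounts to combining the explicit local formula for $c_0'$ read off from the stationary-phase expansion in Proposition~\ref{prop_trace} with Wodzicki's classical divergence argument---now localized to the unit sphere of the fixed-point set $(\C^n)^{g_2 g_1}$ rather than $S^*\R^n$---while carefully tracking the transverse Gaussian factors introduced by the nontrivial group element. Once in place, summing the resulting equality $c_0'(XY) = c_0'(YX)$ over representatives of the conjugacy class completes the proof.
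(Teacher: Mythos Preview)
The paper does not prove this here; it simply cites \cite[Theorem~3.8]{SaSch4}. So your proposal supplies an argument the present article omits, and there is no in-paper proof to compare against.

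Your overall strategy is the standard Guillemin--Wodzicki route adapted to the equivariant setting, and most of it is sound. The conjugacy computation for the group parts of $XY$ and $YX$ is correct, as is the handling of your first summand via $H$-independence (Theorem~\ref{thm_resolvent}(e)). One minor correction: your justification that $\tilde H=U^{-1}HU$ is admissible invokes commutation of $R_h$ with the specific operator $H_0$, but the paper allows a general auxiliary $H$; the right argument is simply that \eqref{Egorov} and \eqref{eq-comm.Sh} show the principal symbol of $\tilde H$ is $h_{\fm}$ composed with an affine symplectic map, hence still scalar and strictly positive.

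Your treatment of the second summand is where you take an unnecessary detour. A direct cyclicity argument already gives $\Tr\bigl(YU_X[H^{-z},A_1]\bigr)=\zeta_{[A_1,YU_X]}(z)$, so the integral form of the $z$-factor trick you state---which reintroduces $\log H$ and pushes you into a log-polyhomogeneous analysis followed by a divergence computation on the fixed-point sphere---is roundabout. The cleaner route is to write
\[
[H^{-z},A_1]=(H^{-z}A_1H^{z}-A_1)H^{-z}=z\,R(z)\,H^{-z},
\]
where $R(z):=z^{-1}(H^{-z}A_1H^{z}-A_1)$ is a holomorphic family of \emph{classical} Shubin operators of order at most $\ord A_1-1$: the principal symbols of $H^{-z}A_1H^z$ and $A_1$ agree because $h_{\fm}$ is scalar, and $R(0)=[A_1,\log H]$ is classical since only derivatives of $\log h_{\fm}$ enter. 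Then $YU_X R(z)$ has the form $R_gT_wC(z)$ with $C(z)$ a holomorphic family of classical Shubin operators, so $\Tr\bigl(YU_X R(z)H^{-z}\bigr)$ falls under Theorem~\ref{thm_zeta} and has at most a simple pole at $z=0$; the extra factor of $z$ makes the product holomorphic there and the residue vanishes without any local symbol computation. Your divergence argument on $\mathbb S^{2m-1}$ is not wrong in spirit and would also succeed, but it is harder than needed and the transverse Gaussian bookkeeping you allude to is genuinely delicate.
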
  

\begin{proof}This was shown in \cite[Theorem 3.8]{SaSch4}.\end{proof} 

In principle, the noncommutative residue can be computed explicitly:
By linearity, it is sufficient to consider a single operator $R_gT_wA$. 
Moreover, as we saw in Lemma \ref{g},  we may assume that $g$ is diagonalizable and of the form \eqref{eq-diag2}. Then we obtain:

\begin{theorem}\label{residue}
Let  $D=R_gT_wA$,  $g$ as in \eqref{eq-diag2}, 
$w=a-ik\in \C^n$ and  $A\in \Psi^{\fa}$. For $\fa=-2m_5$, 
\begin{eqnarray}
\nonumber
\res_{\skp{(w,g)}} D 
= C_{\rm ncr}  
 \int_{\mathbb S^{2m_5-1}}\tr_E a_{-2m_5} (\theta)\,dS.
 \label{eqn_res}
\end{eqnarray}
Here, 
$\mathbb S^{2m_5-1}$ denotes the unit sphere in the $+1$-eigenspace of  
$g$, and 
$a_{-2m_5}$ is the  component of homogeneity  $-2m_5$ in the symbol  expansion of $A$.
The constant is explicitly given by 
\begin{eqnarray*}
C_{\rm ncr} &=&
2^{-n}\pi^{-m_5} 
\prod_{j=1}^{m_1+m_2+m_3}
 e^{\frac i4\ctg\frac{\varphi_j}{2}(k_j^2+a_j^2)}
\Big(1-i\ctg\frac{\varphi_j}2\Big).
\end{eqnarray*}
For  $A\in \Psi^\fa$ with $\fa<-2m_5$ the residue vanishes, while 
additional terms containing derivatives of components $a_{\fa-j}$, $\fa-j>-2m_5$ enter for $\fa>-2m_5$. Details are given, below. 
\end{theorem}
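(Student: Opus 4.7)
The plan is to follow the chain $\res_{\skp{(w,g)}}(R_gT_wA) = \fm\,c_0'(R_gT_wA) = \text{(coefficient of $\mu^{-K\fm}\ln\mu$)}$ back through the resolvent trace expansion of Theorem~\ref{thm_resolvent}. By linearity of Definition~\ref{def_ncr} and by Lemma~\ref{g}, I would restrict to a single summand with $g$ already in the diagonal form \eqref{eq-diag2}. Converting to $\mu$ variables via $\ln(-\gl) = \fm\ln\mu$ and $(-\gl)^{-K} = \mu^{-K\fm}$, the residue equals precisely the coefficient of $\mu^{-K\fm}\ln\mu$ in the expansion of $\Tr(R_gT_wA(H+\mu^\fm)^{-K})$.

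The next step is to pinpoint where this logarithm is produced. Starting from the integral in Proposition~\ref{prop_trace} and inspecting Steps~1--8 of the proof of Theorem~\ref{thm_resolvent}, one sees that Lemma~\ref{4.7} (the region $\{|w|\le 1\}$) and Lemma~\ref{4.18} (the remainder contribution in $\{1\le|w|\le|\mu|\}$) produce only pure powers of $\mu$. All logarithms therefore come from \eqref{log-term} in Lemma~\ref{4.16}, and the leading one, $\mu^{-K\fm}\ln\mu$, is generated only when (i) $l = 1$, i.e.\ the zero eigenvalue $\mu_1=0$ of multiplicity $\gk_1 = 2m_5$; (ii) $\ell = k = 0$ in the geometric expansion \eqref{4.12.3}; and (iii) the resonance $\fd - 2J + 2m_5 = 0$ holds. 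With $\ell = 0$, Proposition~\ref{4.2}(b)(e) forces $\fd = \fa - j'$ for some $j' \ge 0$, where the corresponding coefficient is a sum of terms $\frac{(-\tilde b_0)^\alpha}{\alpha!}\,\partial^\alpha a_{\fa-j}(\tilde B\tilde w)$ with $j + |\alpha| = j'$. The resonance thus becomes the Diophantine condition $j' + 2J = \fa + 2m_5$.

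For the principal case $\fa = -2m_5$ this equation has the unique solution $j' = J = 0$ (hence $j = |\alpha| = 0$): no shift, no derivatives, and the symbol factor is simply $a_{-2m_5}$. The coefficient $c_{1,0,0}$ from \eqref{4.14.1a} reduces to the product of the transverse Gaussian prefactor $\prod_{j:\tilde\lambda_j\ne 0}\sqrt{2\pi/(-i\tilde\lambda_j)}$, contained in the zeroth-order stationary-phase operator $A_0$ for the critical submanifold $\mathbb S^{2m_5-1}\subset\mathbb S^{n'-1}$, with the integral $\int_{\mathbb S^{2m_5-1}}\tr_E a_{-2m_5}(\tilde B\theta)\,dS(\theta)$. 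Orthogonality of $\tilde B$ and the fact that it acts as the identity on the zero-eigenspace collapses this to $\int_{\mathbb S^{2m_5-1}}\tr_E a_{-2m_5}(\theta)\,dS(\theta)$. Multiplying by $C_{\rm res}$ from Proposition~\ref{prop_trace} produces the claimed formula with $C_{\rm ncr} = C_{\rm res}\cdot\prod_{j:\tilde\lambda_j\ne 0}\sqrt{2\pi/(-i\tilde\lambda_j)}$. For $\fa < -2m_5$ the Diophantine condition has no solution in $\N_0$, so no log term arises and the residue vanishes; for $\fa > -2m_5$ each resonant pair $(j',J)$ with $j' + 2J = \fa + 2m_5$ produces an additional contribution involving lower homogeneous components $a_{\fa-j}$ (with $j \le j'$) together with their derivatives coming from $A_{2J}$ and from the Taylor shift factors $(-\tilde b_0)^\alpha/\alpha!$, giving the extra terms announced in the statement.

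The main obstacle is the explicit simplification of $C_{\rm ncr}$ in the principal case. Substituting the explicit values $\tilde\lambda_j = \lambda_j^\pm$ from \eqref{lambdaj} for the $m_1$ coordinate pairs, $\pm 1$ for the $m_2 + m_3$ directions with $\varphi_j = \pm\pi/2$, and $\pm 1$ for the $m_4$ pairs (cf.\ \eqref{eq_phase}), one must simultaneously match the branch conventions of $\sqrt{1+i\tg\varphi_j}$ (positive real part, from $C_{\rm res}$), the Mehler factors $\sqrt{1-i\ctg\varphi_j}$ as in \eqref{eq-mehler1}, and the transverse Gaussian factors $\sqrt{2\pi/(-i\tilde\lambda_j)}$. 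Combined with the exponential factors $e^{(i/4)\ctg(\varphi_j/2)(k_j^2+a_j^2)}$ produced by the affine shifts in Steps~4--6, half-angle identities should collapse the product to the compact form $2^{-n}\pi^{-m_5}\prod_{j=1}^{m_1+m_2+m_3}(1-i\ctg(\varphi_j/2))$, yielding the asserted expression for $C_{\rm ncr}$.
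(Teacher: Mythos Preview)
Your proposal is correct and follows essentially the same route as the paper: isolate the $\mu^{-K\fm}\ln\mu$ coefficient via \eqref{log-term}, use Proposition~\ref{4.2}(b),(e) to reduce to $\ell=k=0$ and the shifted symbol components \eqref{bq}, read off the resonance condition $\fd-2J+2m_5=0$, and in the principal case $\fa=-2m_5$ compute the Gaussian prefactor $(2\pi)^{(n'-2m_5)/2}e^{i\frac\pi4\sgn Q}|\det Q|^{-1/2}$ times $C_{\rm res}$ and simplify via half-angle identities to obtain $C_{\rm ncr}$.

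The one point where the paper goes further than your sketch is case~(iii), $\fa>-2m_5$. You correctly identify the resonant pairs $(j',J)$ and say the extra contributions involve derivatives through the operators $A_{2J}$; the paper remarks that these $A_{2J}$ from the general stationary-phase expansion on $\mathbb S^{n'-1}$ are \emph{not} explicitly computable, and therefore reverts to a tubular-neighborhood parametrization $F(\theta',\omega)=(\theta',\Phi(\omega)\sqrt{1-|\theta'|^2})$ of $\mathbb S^{2m_5-1}\subset\mathbb S^{n'-1}$ in which the phase becomes the flat quadratic form $\langle Q\theta',\theta'\rangle/2$. One can then apply the explicit version of stationary phase for quadratic exponentials (\cite[Theorem~3.13]{Z}), yielding the computable operators $\frac{1}{J!}\bigl(\langle Q^{-1}D,D\rangle/2i\bigr)^J$ in place of the abstract $A_{2J}$. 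This is what furnishes the ``details given below'' promised in the statement.
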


\begin{proof} Replacing $-\gl$ by $\mu^\fm$, we are interested in the $\mu^{-K\fm}\ln\mu$-term in \eqref{log-term}.  
In the proof of Lemma \ref {4.16} we saw that it will only appear  if, in the notation of \eqref{4.12.3}, we have $\ell=k=0$.
 
By Proposition \ref{4.2}(b), the only terms of the form $d(y,p)(h_\fm(y ,p)+\mu^\fm)^{-K}$ 
in the symbol expansion of $A(H+\mu^\fm)^{-K}$ with $d$ homogenous for $|(x, p)|\ge1$ are those, where  $d$ is one of the homogeneous components of the symbol of $A$. 
In addition, \ref{4.2}(d) showed that the only factors $d$ associated to $(h_\fm(\tilde Bw)+\mu^\fm)^{-K}$ in the expansion of $\tilde q(\tilde Bw-\tilde b_0)$ are the functions 
\begin{eqnarray}\label{bq}
\frac{(-\tilde b_0)^\ga}{\ga!}  \partial_w^\ga \tr_E a_{\fa-j}(\tilde Bw)
\end{eqnarray}
for some choice of $j$ and $\ga$. Further derivatives on the $a_{\fa-j}$  might come from the stationary phase expansion in \eqref{4.14.1a}.
We next distinguish the cases where $\fa=-2m_5$, $\fa<-2m_5$ or $\fa>-2m_5$. 
 
(i) Let  $\fa=-2m_5$. Since derivatives decrease the order, a nontrivial contribution to the $\mu^{-K\fm}\ln \mu$-term is only possible, if $j=|\ga|=0$ in \eqref{bq} and $J=0$ in \eqref{log-term}.
But then, \eqref{log-term} and \eqref{4.14.1a} show that the coefficient is 
\begin{eqnarray}\label{cont0}
c_{1,0,0}=(2\pi)^{(n'-2m_5)/2}\frac{e^{i\frac\pi4\sgn Q}}{|\det Q|^{1/2}}
\int_{\mathbb S^{2m_5-1}} \tr_E a_{-2m_5}(\tilde B(\gt)) dS
\end{eqnarray}
with $Q={\diag} (2\tilde \gl_1, \ldots, 2\tilde \gl_{n'-2m_5})$ and the $\tilde \gl_j$ introduced after \eqref{eq:simple} so that 
\begin{align*}
&|\det Q|=\prod_{j=1}^{n'-2m_5}|2\tilde \gl_j|=2^{m_2+m_3+2m_4}\prod_{j=1}^{m_1}|2\gl^+_j 2\gl^-_j|\\
&=2^{ m_2+m_3+2m_4}\prod_{j=1}^{m_1} 
\left|\frac{\sin\varphi_j-(1-\cos\varphi_j)}{\cos\varphi_j}
\frac{\sin\varphi_j+(1-\cos\varphi_j)}{\cos\varphi_j}\right|\\
&=2^{ m_2+m_3+2m_4}\prod_{j=1}^{m_1} 
\frac{2|\cos\varphi_j|(1-\cos\varphi_j)}{\cos^2\varphi_j}
=2^{2m_1+m_2+m_3+2m_4}\prod_{j=1}^{m_1}\frac{\sin^2\frac{\varphi_j}2}{|\cos\varphi_j|}
\end{align*}
and 
\begin{equation*}
\sgn Q=m_3-m_2-\sum_{j,\pm}\sgn \lambda_j^\pm =m_3-m_2- \sum_j\sigma(\varphi_j),
\end{equation*}
where
$$
\sigma(\varphi_j)=\sgn \lambda_j^++\sgn\lambda_j^-=\left\{
\begin{array}{ll}
2\sgn \varphi_j, & \text{if }|\varphi_j|<\pi/2\\
0, & \text{otherwise}.
\end{array}
\right.
$$
The second identity holds, since $\lambda^+_j$ is negative only for $\varphi_j\in(-\pi/2,0)$, while $\lambda_j^-$ is positive only for $\varphi_j\in(0,\pi/2)$.

We can now express the noncommutative residue as:
$$
\res_{\skp{(w,g)}} D=C_{\rm res} (2\pi)^{(n'-2m_5)/2}\frac{e^{i\frac\pi4\sgn Q}}{|\det Q|^{1/2}}\int_{\mathbb S^{2m_5-1}} \tr_E a_{-2m_5}(\gt)\, dS.
$$
Using  the equalities:
\begin{align*}
-\sigma(\varphi_j)-\sgn\varphi_j+\sgn\ctg\varphi_j&=-2\sgn\varphi_j;\\
e^{i\frac\pi4(-\sigma(\varphi_j)-\sgn\varphi_j+\sgn\ctg\varphi_j)}&=e^{-i\frac\pi2\sgn\varphi_j}=-i\sgn\varphi_j\\
 1-i\ctg(\pm\pi/4)&= \sqrt 2\;e^{\mp i\frac \pi 4}
\end{align*}
the prefactor can be simplified as follows:
\begin{align*}
&(2\pi)^{-n+\frac{m_2+m_3}2} 
\prod_{j=1}^{m_1}  
\frac{e^{i\frac{\varphi_j} 2}}{|\cos\varphi_j|^{1/2}}e^{i\frac\pi 4(-\sgn\varphi_j+\sgn\ctg\varphi_j)}
   \prod_{j=1}^{m_1+m_2+m_3}
 e^{\frac i4\ctg(\varphi_j/2)(k_j^2+a_j^2)}\\
& \times
 (2\pi)^{n-\frac{m_2+m_3}2-m_5}e^{i\frac\pi4(m_3-m_2)}
 2^{-m_1-\frac{m_2+m_3}2-m_4}\prod_{j=1}^{m_1}\frac{|\cos\varphi_j|^{1/2}}{|\sin\frac{\varphi_j}2|}e^{-i\frac\pi4\sigma(\varphi_j)}\\
& =2^{-m_1-\frac{m_2+m_3}2-m_4-m_5}\pi^{-m_5}e^{i\frac\pi4(m_3-m_2)} \prod_{j=1}^{m_1+m_2+m_3}
 e^{\frac i4\ctg\frac{\varphi_j}{2}(k_j^2+a_j^2)}\prod_{j=1}^{m_1}  
\Big(1-i\ctg\frac{\varphi_j}2\Big)\\
&
=2^{-n}\pi^{-m_5} 
\prod_{j=1}^{m_1+m_2+m_3}
 e^{\frac i4\ctg\frac{\varphi_j}{2}(k_j^2+a_j^2)}
\Big(1-i\ctg\frac{\varphi_j}2\Big).
\end{align*}

We know that $\tilde B$ is the identity on $(x_{n-m_5+1},p_{n-m_5+1},\ldots 
x_n,p_n)$ and that, on $\mathbb S^{2m_5-1}$, the other variables are zero. Hence the integral in~\eqref{cont0} reduces to    
\begin{eqnarray}\label{cont1}
\int_{\mathbb S^{2m_5-1}} \tr_E a_{-2m_5}(\gt)\, dS
\end{eqnarray}
as asserted. 

(ii) If $\ord A<-2m_5$, then there will be no term of the form $d(y,p)(h_\fm(y,p)+\mu^\fm)^{-K}$ 
where $d$ is both of homogeneity $-2m_5$ and a derivative of a symbol component of $a$.
According to the above considerations,  there will be no contribution to the noncommutative residue.  

(iii) Let $\ord A>-2m_5$. Following the above approach, we would have to treat also terms arising from the stationary phase expansion \eqref{4.14.1a} for $J\not=0$, which are not explicitly computable. 
Fortunately, this can be avoided: We go one step back and reconsider Equation \eqref{4.12.4}, 
using a special parametrization of a tubular neighborhood of 
$$\mathbb S^{2m_5-1} =\{ \gt \in \mathbb S^{n'-1} : 
\gt' = (\gt_1, \ldots , \gt_{n'-2m_5})=0\} \subset \mathbb S^{n'-1}:$$ 
Namely, we fix a parametrization 
$\Phi: D\subset \R^{2m_5-1}\to \mathbb S^{2m_5-1}$ (e.g. the usual spherical coordinates) 
and let, for small $\gve>0$ and $W_\gve = [-\gve,\gve]^{n'-2m_5}$,
$$F: W_\gve\times D\to \R^{n'}, \quad F(\gt',\go) = (\gt', \Phi(\go)\sqrt{1-|\gt'|^2}).$$
The induced measure is $(1-|\gt'|^2)^{m_5-1} d\gt'dS$ with the surface measure $dS$ of
$\mathbb S^{2m_5-1}$.

We know that there will be no contribution to the $\mu^{-K\fm}\ln\mu$-term  from \eqref{4.12.4} 
unless $k=0$ there.
 Up to terms that are $O(|\mu|^{-\infty})$, the integral, with $k=0$, $l=1$ and $d$ from  \eqref{bq} of homogeneity degree $\fd = \fa-j-|\ga|$,  is
\begin{eqnarray}
\label{oscint}
\int_1^{|\mu|} \int_D\int_{W_\gve} e^{ir^2\skp{Q\gt',\gt'}/2} d(\gt',\Phi(\go)\sqrt{1-|\gt'|^2}) \, d\gt'dSr^{\fd+n'-1}\, dr,
\end{eqnarray}
where $Q={\diag} (2\tilde \gl_1, \ldots, 2\tilde \gl_{n'-2m_5})$ with the $\tilde \gl_j$ introduced after \eqref{eq:simple}.

The advantage is that we may now  apply to the inner integral the explicit version of the stationary phase expansion for quadratic exponentials,   \cite[Theorem 3.13]{Z}, with $h=r^{-2}$: 
\begin{eqnarray*}
\lefteqn{\nonumber
\int e^{\frac ih \skp{Q\gt',\gt'}/2} d(\gt', \Phi(\go)\sqrt{1-|\gt'|^2}) \, d\gt'}\\
&=& \left( {2\pi h}\right)^{\frac{n'-2m_5}2} \frac{e^{i\frac\pi4\sgn Q}}{|\det Q|^{1/2}}   
\left(
\sum_{J=0}^{N-1} \frac{h^J}{J!} \left( \frac{\skp{Q^{-1}D,D}}{2i}\right)^J d(0,\Phi(\go)) +O(h^N) \right).
\end{eqnarray*}
Taking $h=r^{-2}$,  this furnishes an expansion into decreasing powers of $r$, which we then insert into   \eqref{oscint}. 
When evaluating the integral over $[1,|\mu|]$, we will only get a nontrivial contribution to the $\mu^{-K}\ln \mu$-term, if the total $r$-power is $-1$. 
In the case at hand, the possible powers are $(\fd+n'-1) +(-n'+2m_5)-2J = \fa-j-|\ga|+2m_5-2J-1$. Hence only those (finitely many) terms, where
$$\fa-j-|\ga| -2J = -2m_5$$
can contribute. For these we obtain an explicit formula. 
\end{proof}

\section{Appendix}

\subsection*{Weakly parametric pseudodifferential operators}

Following \cite[Section 1]{GS1} we let  $S$ be an open sector in $\mathbb C$ 
with vertex at the origin and  $p=p(x,\xi,\mu)$ a smooth function on 
$\mathbb R^\nu\times \mathbb R^n\times
\overline{S}$, which is additionally holomorphic for $\mu\in S$ and $|\xi,\mu|\ge \gve$, for some $\gve>0$. 

We say that  $p\in S^{\fm,d}(\mathbb R^\nu,  \mathbb R^n;S)$  
provided that 
$$\partial^j_t( t^dp(\cdot,\cdot, 1/t)) \in S^{m+j} (\mathbb R^\nu\times  \mathbb R^n) $$
for $ 1/t \in S$, with uniform estimates in 
$S^{\fm+j} (\mathbb R^\nu\times  \mathbb R^n) $ for $|t|\le 1$ and $1/t$ in closed 
subsectors of $S$.  

We shall need a few basic facts:

\begin{lemma}\label{basic}
{\rm (a)} If $p=p(x,\xi)\in S^\fm(\mathbb R^\nu\times \mathbb R^n)$ is independent of $\mu$, 
then $p\in S^{\fm,0}( \mathbb R^\nu, \mathbb R^n; \mathbb C)$, see \cite[Example 1.2]{GS1}.
\\
{\rm (b)} If $p_1\in S^{\fm_1,d_1}$ and $p_2\in S^{\fm_2,d_2}$, then 
$p_1p_2\in S^{\fm_1+\fm_2,d_1+d_2}$, see \cite[Lemma 1.6]{GS1}. 
\\
{\rm (c)} If $a_\fm=a(x,\xi)$ is homogeneous of integer degree $\fm>0$ in $\xi$ for $|\xi|\ge1$ and smooth in $(x,\xi)$ on $\mathbb R^\nu\times \mathbb R^n$, and if $a(x,\xi)+\mu^\fm$ is invertible for $(x,\xi,\mu) \in \R^\nu\times\R^n\times \overline S$, then the symbol 
$p$ defined by 
$$p(x,\xi,\mu) = (a(x,\xi) +\mu^\fm)^{-1}$$
is an element of $ S^{0,-\fm}(\R^\nu, \R^n; S)\cap S^{-\fm,0}(\R^\nu, \R^n; S) $ by
\cite[Theorem~1.17]{GS1}.  
\\
{\rm (d)} If $a$ is as in {\rm (c)} and $a(x,\xi)+\mu^\fm$ is invertible for $(x,\xi,\mu) \in \R^\nu\times\R^n\times \overline S$, $|\xi|\ge1$, then  $a$ can be modified on $|\xi|<1$ so that $a_\fm(x,\xi)+\mu^\fm$ is invertible for all $(x,\xi)$, $\mu\in S$, see e.g. Seeley \cite{Seeley67}. 
In the sequel we shall assume that this has been done when considering such symbols. 
\end{lemma}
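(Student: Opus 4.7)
The plan is to dispatch parts (a) and (b) directly from the definition of $S^{\fm,d}$, to prove (c) by explicit differentiation of the resolvent symbol combined with the invertibility hypothesis, and to obtain (d) by a standard cutoff modification in the style of Seeley. The main obstacle lies in (c), where uniform control in the parameter $t$ across the sector must be combined with standard $(x,\xi)$-symbol estimates.

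For (a), since $p=p(x,\xi)$ does not depend on $\mu$, the function $t^{0}p(x,\xi,1/t)=p(x,\xi)$ is constant in $t$: its zeroth $t$-derivative lies in $S^\fm$ by hypothesis, while all higher derivatives vanish identically; the uniform estimates in $|t|\le 1$ are trivial, giving $p\in S^{\fm,0}$. For (b), I would apply the Leibniz rule to
$$
\partial_t^j\bigl(t^{d_1+d_2}\,p_1(x,\xi,1/t)\,p_2(x,\xi,1/t)\bigr)
$$
after factoring $t^{d_1+d_2}=t^{d_1}t^{d_2}$. Each resulting term is a product of $\partial_t^{j_1}\bigl(t^{d_1}p_1(x,\xi,1/t)\bigr)\in S^{\fm_1+j_1}$ and $\partial_t^{j_2}\bigl(t^{d_2}p_2(x,\xi,1/t)\bigr)\in S^{\fm_2+j_2}$ with $j_1+j_2=j$, and the product of two such symbols lies in $S^{\fm_1+\fm_2+j}$ uniformly in $t$ over any closed subsector of $S$. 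Hence $p_1p_2\in S^{\fm_1+\fm_2,d_1+d_2}$.

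For (c), setting $\mu=1/t$ yields the two identities
$$
t^\fm p(x,\xi,1/t)=\bigl(1+t^\fm a(x,\xi)\bigr)^{-1},\qquad p(x,\xi,1/t)=\bigl(a(x,\xi)+t^{-\fm}\bigr)^{-1}.
$$
The first gives $p\in S^{0,-\fm}$ once $\partial_t^j[(1+t^\fm a)^{-1}]\in S^j$ is established. Repeated differentiation produces a finite sum of terms $c\,t^{\fm r-j}\,P_r(\partial^\alpha a)\,(1+t^\fm a)^{-(r+1)}$ with $1\le r\le j$ and $P_r$ a polynomial in derivatives of $a$ of total $a$-weight $r$. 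The uniform invertibility hypothesis supplies the bound $|(1+t^\fm a)^{-1}|\lesssim 1$ on $\overline S\times\R^\nu\times\R^n$, and standard Leibniz estimates on the $(x,\xi)$-derivatives then yield the required symbol bounds of order $j$, uniformly in $t$ in closed subsectors. A parallel computation starting from $(a+t^{-\fm})^{-1}$ gives $p\in S^{-\fm,0}$. The hard part will be interpolating these uniform bounds between the regimes $|t\xi|^\fm\gg 1$ and $|t\xi|^\fm\lesssim 1$; this is carried out in detail in \cite[Theorem 1.17]{GS1}, which I would cite after the leading-order verification above.

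For (d), I would fix a smooth cutoff $\chi\colon\R^n\to[0,1]$ with $\chi(\xi)=0$ for $|\xi|\le 1/2$ and $\chi(\xi)=1$ for $|\xi|\ge 1$, and put
$$
\tilde a(x,\xi)=\chi(\xi)\,a(x,\xi)+\bigl(1-\chi(\xi)\bigr)\,C\,\langle\xi\rangle^\fm,
$$
choosing $C>0$ so large that $C\langle\xi\rangle^\fm+\mu^\fm\neq 0$ for all $|\xi|\le 1$ and $\mu\in\overline S$, which is possible because $\overline S$ is a proper sector avoiding the corresponding branch of $-\mu^\fm$. Then $\tilde a$ coincides with $a$ for $|\xi|\ge 1$, so its homogeneous expansion at infinity is unchanged, and $\tilde a+\mu^\fm$ is invertible on all of $\R^\nu\times\R^n\times\overline S$. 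This is the classical Seeley modification \cite{Seeley67}.
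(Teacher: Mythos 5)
The paper gives no proof of this lemma: all four parts are stated with citations to Grubb--Seeley \cite{GS1} (Example 1.2, Lemma 1.6, Theorem~1.17) and Seeley \cite{Seeley67}. What you have done is fill in the arguments those citations stand for; this is a legitimate approach and your sketches for (a), (b) are complete and correct, essentially immediate from the definition of $S^{\fm,d}$.

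In (c) there is a sign slip: from $p(x,\xi,1/t)=(a+t^{-\fm})^{-1}= t^\fm(1+t^\fm a)^{-1}$ one gets $t^{-\fm}p(x,\xi,1/t)=(1+t^\fm a)^{-1}$, not $t^\fm p(x,\xi,1/t)$; membership in $S^{0,-\fm}$ is precisely the requirement $\partial_t^j\bigl(t^{-\fm}p(\cdot,\cdot,1/t)\bigr)\in S^j$, so the object you then differentiate is the right one, but the display should read $t^{-\fm}p$. More importantly, the step ``the uniform invertibility hypothesis supplies the bound $|(1+t^\fm a)^{-1}|\lesssim 1$'' is not literally a consequence of pointwise invertibility: one needs the quantitative estimate $|a(x,\xi)+\mu^\fm|\gtrsim \langle\xi\rangle^\fm+|\mu|^\fm$, which in this paper's setting comes from the assumed strict positivity and homogeneity of the principal symbol (so the spectrum of $a+\mu^\fm$ is uniformly bounded away from $0$ in the relevant sector). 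You do acknowledge the full result is \cite[Theorem~1.17]{GS1}, so this is a sensible deferral, but the phrase ``invertibility'' understates what is used.

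The same positivity is what you implicitly need in (d): the convex combination $\tilde a=\chi a+(1-\chi)C\langle\xi\rangle^\fm$ has no a priori reason to keep $\tilde a+\mu^\fm$ invertible on the transition annulus $1/2\le|\xi|\le 1$ from invertibility of $a+\mu^\fm$ alone. It does work once you observe that $a$ (extended by homogeneity) is strictly positive there and $C\langle\xi\rangle^\fm$ is positive, so $\tilde a$ is positive for all $\xi$ and $\tilde a+\mu^\fm$ stays away from $0$ because $\mu^\fm$ lies in a sector avoiding the negative reals. State the positivity explicitly; with that addition the argument is the standard Seeley modification and works as you describe.
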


\begin{theorem}\label{GrubbS2} {\rm (\cite[Theorem 1.12.]{GS1})}
For $p\in S^{\fm,d}$ set 
\begin{eqnarray}\label{exp_infty}%
p_{(d,k)} (x,\xi) = \frac1{k!} \partial^k_t(t^dp(x,\xi,1/t))_{|t=0}.
\end{eqnarray}
Then $p_{(d,k)}\in S^{\fm+k} $, and for any $N$, 
\begin{eqnarray}
p(x,\xi,\mu) -\sum_{k=0}^{N-1} \mu^{d-k} p_{(d,k)} (x,\xi) \in S^{\fm+N, d-N}.
\end{eqnarray}   
\end{theorem}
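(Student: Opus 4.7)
The plan is to reduce the claim to Taylor's theorem in the variable $t=1/\mu$. Setting $q(x,\xi,t):=t^d p(x,\xi,1/t)$, the very definition of $S^{\fm,d}$ says that for every $k\in \N_0$,
$$\partial_t^k q(x,\xi,t)\in S^{\fm+k}(\R^\nu\times\R^n)$$
uniformly for $|t|\le 1$ with $1/t$ in closed subsectors of $S$. Taking $t=0$ immediately gives $p_{(d,k)}=\tfrac1{k!}\partial_t^k q(\cdot,\cdot,0)\in S^{\fm+k}$, which settles the first assertion.

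For the second assertion, I would apply Taylor's formula in $t$ to $q(x,\xi,t)$ about $t=0$ of order $N$ with integral remainder,
$$q(x,\xi,t)=\sum_{k=0}^{N-1} t^k p_{(d,k)}(x,\xi)+R_N(x,\xi,t),\qquad R_N(x,\xi,t)=\frac{t^N}{(N-1)!}\int_0^1(1-s)^{N-1}\partial_t^N q(x,\xi,st)\,ds.$$
Substituting $t=1/\mu$ and multiplying by $\mu^d$ yields
$$p(x,\xi,\mu)-\sum_{k=0}^{N-1}\mu^{d-k}p_{(d,k)}(x,\xi)=\mu^d R_N(x,\xi,1/\mu)=:r_N(x,\xi,\mu).$$
Thus the theorem reduces to showing $r_N\in S^{\fm+N,d-N}$.

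To verify this, I would check the defining condition of $S^{\fm+N,d-N}$ directly: for $\tau=1/\mu$,
$$\tau^{d-N}r_N(x,\xi,1/\tau)=\tau^{-N}R_N(x,\xi,\tau)=\frac1{(N-1)!}\int_0^1(1-s)^{N-1}\partial_t^N q(x,\xi,s\tau)\,ds,$$
and differentiating under the integral sign $j$ times in $\tau$ produces
$$\frac1{(N-1)!}\int_0^1(1-s)^{N-1}s^{j}\partial_t^{N+j} q(x,\xi,s\tau)\,ds.$$
Since $\partial_t^{N+j}q\in S^{\fm+N+j}$ uniformly for $|s\tau|\le 1$ in closed subsectors, and since $S^{\fm+N+j}$-seminorms pass under the bounded integral $\int_0^1(1-s)^{N-1}s^j\,ds$, we obtain $\partial_\tau^j(\tau^{d-N}r_N(x,\xi,1/\tau))\in S^{\fm+N+j}$ with the required uniformity. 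This is precisely the membership $r_N\in S^{\fm+N,d-N}$.

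The main conceptual point is the Taylor expansion; the only technical obstacle is the bookkeeping in checking uniformity of seminorms when differentiating through the integral remainder and verifying holomorphicity in $\mu\in S$ (inherited from $p$), but both are routine consequences of the definition of the weakly parametric class and the fact that closed subsectors of $S$ are preserved under the rescalings used.
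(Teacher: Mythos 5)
Your proposal is correct. The paper cites this result from Grubb--Seeley [GS1, Theorem 1.12] without reproducing the proof, and your Taylor-expansion-with-integral-remainder argument in the variable $t=1/\mu$ is exactly the standard proof given there: the definition of $S^{\fm,d}$ makes $q=t^d p(\cdot,\cdot,1/t)$ $C^\infty$ up to $t=0$ with $\partial_t^j q\in S^{\fm+j}$ uniformly, the finite Taylor sum peels off the $\mu^{d-k}p_{(d,k)}$ terms, and differentiating the integral remainder $j$ times costs a factor $s^j$ and raises the $t$-derivative to $\partial_t^{N+j}q\in S^{\fm+N+j}$, which is exactly membership in $S^{\fm+N,d-N}$.
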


\begin{theorem}\label{parametrix}
Let $a\in S^\fm_{cl}$, $\fm>0$, with principal symbol $a_\fm$ as in Lemma \ref{basic}(c).  Then the standard parametrix construction furnishes a symbol $p=p(x,\xi,\mu)$ such that 
$$p\circ (a+\mu^\fm)  \sim 1 \sim (a+\mu^\fm)\circ p\text{\rm { mod }} 
S^{-\infty,-\fm}(\R^n, \R^n;S).$$
The symbol $p$ is an element of $S^{-\fm,0}\cap S^{0,-\fm}$ with an asymptotic 
expansion 
$$p\sim\sum_{j=0}^\infty p_{-\fm-j},$$
where
\begin{eqnarray*}
p_{-\fm}(x,\xi,\mu) &=& (a_\fm(x,\xi)+\mu^\fm)^{-1}  \in (S^{-\fm,0}\cap S^{0,-\fm})(\R^n, \R^n;S)  \text{ and} \\
p_{-\fm-j} &\in& (S^{-\fm-j,0}\cap S^{\fm-j,-2\fm})(\R^n, \R^n;S).
\end{eqnarray*}    
\end{theorem}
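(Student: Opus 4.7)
The plan is to build $p$ via the standard symbolic parametrix construction and then verify the weakly parametric symbol estimates in both norms. Set $p_{-\fm}(x,\xi,\mu) = (a_\fm(x,\xi)+\mu^\fm)^{-1}$; by Lemma \ref{basic}(c) this belongs to $S^{-\fm,0} \cap S^{0,-\fm}$. Using the asymptotic formula for left-symbol composition $p \# q \sim \sum_\alpha \frac{(-i)^{|\alpha|}}{\alpha!}\partial_\xi^\alpha p \cdot \partial_x^\alpha q$ (and the fact that $\partial_x^\alpha \mu^\fm = 0$ for $|\alpha|\ge 1$), recursively define $p_{-\fm-j}$ for $j\ge 1$ so that the bi-degree-$j$ term in the expansion of $\big(\sum_{k\le j} p_{-\fm-k}\big)\#(a+\mu^\fm)$ vanishes. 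A direct induction shows that each $p_{-\fm-j}$ is a finite linear combination of expressions
$$(\partial_\xi^{\alpha_1}\partial_x^{\beta_1}a_{\fm-k_1})\cdots (\partial_\xi^{\alpha_r}\partial_x^{\beta_r}a_{\fm-k_r})(a_\fm+\mu^\fm)^{-(r+1)}$$
with $r\ge 1$ and $\sum_l k_l + \sum_l |\alpha_l| + \sum_l |\beta_l| = j$, exactly of the shape appearing in \eqref{4.2.4}.

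The main step is the symbol class bookkeeping. Each derivative factor $\partial_\xi^{\alpha_l}\partial_x^{\beta_l}a_{\fm-k_l}$ is classical Shubin of order $\fm - k_l - |\alpha_l| - |\beta_l|$ and hence, by Lemma \ref{basic}(a), lies in $S^{\fm - k_l - |\alpha_l| - |\beta_l|, 0}$; the product of the $r$ such factors lies in $S^{r\fm - j, 0}$ by Lemma \ref{basic}(b). For the resolvent part, $(a_\fm+\mu^\fm)^{-(r+1)}$ is a product of $r+1$ copies of $(a_\fm+\mu^\fm)^{-1}$, each of which can, by Lemma \ref{basic}(c), be placed in either $S^{-\fm,0}$ or $S^{0,-\fm}$. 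Applying Lemma \ref{basic}(b) with all $r+1$ copies treated as $S^{-\fm,0}$ gives $p_{-\fm-j} \in S^{r\fm - j - (r+1)\fm, 0} = S^{-\fm-j, 0}$. Placing two of the at-least-two copies in $S^{0,-\fm}$ and the remaining $r-1$ in $S^{-\fm,0}$ gives $p_{-\fm-j} \in S^{r\fm - j - (r-1)\fm, -2\fm} = S^{\fm - j, -2\fm}$. Both claimed inclusions follow.

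Finally, the formal series $\sum_{j\ge 0} p_{-\fm-j}$ is realised as an actual symbol $p$ by a Borel-type summation in the weakly parametric classes (as set up in \cite[Section 1]{GS1}). By construction, $p \# (a+\mu^\fm) - 1$ has arbitrarily negative Shubin order, while an overall factor $(a_\fm+\mu^\fm)^{-1}$ survives in every term of the recursion, placing the remainder in $S^{-\infty, -\fm}$; the right-parametrix identity follows from an entirely symmetric construction. The principal obstacle is the combinatorial bookkeeping in the middle paragraph: one must track, across each iteration, how the multi-indices produced by Leibniz's rule interact with the two alternative readings of Lemma \ref{basic}(c), and verify that this yields \emph{exactly} the intersection $S^{-\fm-j,0} \cap S^{\fm-j,-2\fm}$ advertised by the theorem.
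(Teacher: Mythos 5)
Your construction is the same as the paper's: take $p_{-\fm}=(a_\fm+\mu^\fm)^{-1}$, run the standard parametrix recursion to generate $p_{-\fm-j}$ as sums of products of derivatives of the $a_{\fm-k}$'s against powers of the resolvent, and then verify the two symbol-class memberships by reading each resolvent factor either as $S^{-\fm,0}$ or as $S^{0,-\fm}$ via Lemma~\ref{basic}(c) together with the product rule Lemma~\ref{basic}(b). The paper states this only in outline and defers to Grubb--Seeley; your degree bookkeeping $\sum k_l+\sum|\alpha_l|+\sum|\beta_l|=j$ agrees with the paper's formula~\eqref{4.2.4} (and with the homogeneity count), and the resulting inclusions $S^{-\fm-j,0}$ and $S^{\fm-j,-2\fm}$ come out correctly.
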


The construction is standard, letting 
\begin{eqnarray*}%
p_{-\fm} (x,\xi,\mu) &=&(a_\fm(x,\xi)+\mu^\fm)^{-1}\quad\text{and} \\
p_{-\fm-j} (x,\xi,\mu)
&=& \sum_{k, \alpha, \beta}c_{k,\alpha,\beta} p_{-\fm}(\partial_\xi^{\ga_1}\partial_x^{\gb_1} a_{\fm-k_1} ) 
\ldots 
p_{-\fm}(\partial_\xi^{\ga_r}\partial_x^{\gb_r} a_{\fm-k_r} )p_{-\fm},
\end{eqnarray*}
where, for $j>0$, we have $r\ge 1$ and 
\begin{eqnarray*}%
\sum_{l=1}^r|\ga_l| =\sum_{l=1}^r |\gb_l| 
\text{ and } \sum_{l=1}^r|\ga_l|+\sum_{l=1}^r|k_l|=j .
\end{eqnarray*}
Details can be found in \cite[p.~501]{GS1}. Note: All terms in the asymptotic expansion of the symbol of the parametrix are functions of $\mu^\fm$ and hence the coefficients $p_{(-\fm,k)}$ in Theorem 
\ref{GrubbS2} are zero unless $k$ is a multiple of $\fm$, so that the expansion is in powers $\mu^{d-k\fm}$. 

We need a few more simple observations: 

\begin{lemma} \label{4.5}
{\rm (a)} Let $p(\xi,\eta,\zeta; \mu)\in S^{\fm,d}(\R^k\times \R^l\times \R^l;S)$ be independent of $x$.
Define $\tilde p(\xi,\eta; \mu) = p(\xi,\eta,\eta;\mu)$. Then   
$\tilde p \in S^{\fm,d}(\R^k\times \R^l;S)$. 

{\rm (b)} Let $p(\xi; \mu)\in S^{\fm,d}(\R^k;S)$. Define $\tilde p(\xi;\mu) = p(B\xi+b;\mu)$ for $B\in \cL(\R^k)^{-1} $ and $b\in \R^k$. Then $\tilde p \in S^{\fm.d}(\R^k;S)$. 

{\rm (c)}  Let $p(\xi; \mu)\in S^{\fm,d}(\R^k;S)$ be homogeneous in $(\xi,\mu)$ 
for $|\xi|\ge1$ and $b\in \R^k$. 
Define $\tilde p(\xi,\mu)  = p(\xi+b;\mu)$ and consider a finite Taylor expansion 
\begin{eqnarray*}%
p(\xi+b;\mu) &=& \sum_{|\ga|< N} \frac{b^\ga}{\ga!} \partial^\ga_\xi p(\xi;\mu) +r_N(\xi;\mu).
\end{eqnarray*}
Then $r_N\in S^{\fm-N,d}(\R^k; S)$.
\end{lemma}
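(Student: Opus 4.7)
The plan is to reduce each part to the unwound form of the definition, i.e.\ to the requirement that $q_j(t;\,\cdot\,):=\partial_t^j(t^d p(\,\cdot\,;1/t))$ belongs to $S^{\fm+j}$ with uniform bounds for $|t|\le 1$ and $1/t$ in a closed subsector of $S$. Once this is done, (a)--(c) collapse to three standard facts about Hörmander-type symbol classes: invariance under partial diagonal restriction, invariance under affine change of variables, and Taylor's formula with integral remainder.

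For (a), I would start with $q_j(t;\xi,\eta,\zeta)\in S^{\fm+j}(\R^k\times\R^l\times\R^l)$ uniformly. Since $\partial_t$ commutes with evaluation at $\zeta=\eta$, the analogous object for $\tilde p$ is $\tilde q_j(t;\xi,\eta)=q_j(t;\xi,\eta,\eta)$. By the chain rule,
$$\partial_\xi^{\ga_1}\partial_\eta^{\ga_2}\tilde q_j(t;\xi,\eta)=\sum_{\gb+\gb'=\ga_2}\binom{\ga_2}{\gb}\bigl(\partial_\xi^{\ga_1}\partial_\eta^{\gb}\partial_\zeta^{\gb'}q_j\bigr)(t;\xi,\eta,\eta),$$
and each summand is bounded by $C(1+|\xi|+2|\eta|)^{\fm+j-|\ga_1|-|\ga_2|}\le C'(1+|\xi|+|\eta|)^{\fm+j-|\ga_1|-|\ga_2|}$ uniformly in $t$. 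Hence $\tilde q_j\in S^{\fm+j}(\R^k\times\R^l)$ uniformly, which is exactly $\tilde p\in S^{\fm,d}(\R^k\times\R^l;S)$. For (b) the same reduction leaves us to verify that $S^{\fm+j}(\R^k)$ is preserved by the affine substitution $\xi\mapsto B\xi+b$ with $B\in\cL(\R^k)$ invertible. This is routine: $\partial_\xi^\ga[p(B\xi+b;\mu)]$ is a linear combination, with $B$-dependent coefficients, of $(\partial^{\ga'}p)(B\xi+b;\mu)$ with $|\ga'|=|\ga|$, and the weights $1+|\xi|$ and $1+|B\xi+b|$ are equivalent since $B$ is invertible.

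For (c), I would first record that $\partial_\xi^\ga p\in S^{\fm-|\ga|,d}(\R^k;S)$: indeed $\partial_t^j(t^d\partial_\xi^\ga p(\xi;1/t))=\partial_\xi^\ga q_j(t;\xi)\in S^{\fm+j-|\ga|}(\R^k)$ uniformly in $t$. Then I would write the Taylor remainder in integral form,
$$r_N(\xi;\mu)=N\sum_{|\ga|=N}\frac{b^\ga}{\ga!}\int_0^1(1-s)^{N-1}(\partial_\xi^\ga p)(\xi+sb;\mu)\,ds.$$
By the preceding fact combined with (b), each integrand $(\partial_\xi^\ga p)(\xi+sb;\mu)$ lies in $S^{\fm-N,d}(\R^k;S)$ with seminorm estimates uniform in $s\in[0,1]$; integration in $s$ preserves the class. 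Summing over $|\ga|=N$ yields $r_N\in S^{\fm-N,d}(\R^k;S)$.

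The main (and really only) obstacle is bookkeeping: once one carefully unpacks the definition of $S^{\fm,d}$ into the conditions on the $q_j$, nothing beyond the chain rule, equivalence of affine-related weights, and the integral form of Taylor's formula is required. I note in passing that the homogeneity hypothesis stated in (c) does not enter the argument; the conclusion holds for every $p\in S^{\fm,d}(\R^k;S)$, which is consistent with how the lemma is invoked in the body of the paper.
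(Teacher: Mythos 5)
Your argument is correct and is essentially the same as the paper's: the paper proves (a) and (b) by invoking the weight equivalences $c\skp{(\xi,\eta)}\le\skp{(\xi,\eta,\eta)}\le C\skp{(\xi,\eta)}$ and $c\skp{\xi}\le\skp{B\xi+b}\le C\skp{\xi}$, and proves (c) by the same integral form of Taylor's remainder combined with uniformity in the parameter $\gt\in[0,1]$; you have just unwound the definition of $S^{\fm,d}$ a bit more explicitly before reducing to these facts. Your closing remark that the homogeneity hypothesis in (c) is not actually used is also consistent with the paper's proof, which makes no use of it either.
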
 

\begin{proof}
(a) and (b) are immediate consequences of the facts that there exist constants $c,C>0$ such that 
\begin{eqnarray}%
c\skp{(\xi,\eta)}&\le &\skp{(\xi,\eta,\eta)}\le C\skp{(\xi,\eta)}\text{ and}\nonumber\\
c\skp{\xi}&\le &\skp{B\xi+b}\le C\skp{\xi}.\label{4.5.2}
\end{eqnarray} 

(c) We have 
\begin{eqnarray*}%
r_N(\xi,\mu) =N  \sum_{|\beta|=N}\frac{b^\gb}{\gb!}\int_0^1 
(1-\gt)^{N-1} \partial^\gb_\xi p(\xi+\gt b;\mu)\, d\gt.
\end{eqnarray*}
The inequalities \eqref{4.5.2} imply that $\partial^\gb_\xi p(\xi+\gt b;\mu)\in
S^{m-N,d}$, uniformly in $\gt$, hence this is also true for $r_N$.  
\end{proof}


\end{document}